\DeclareMathOperator{\dive}{div}
\DeclareMathOperator{\spt}{spt}
\DeclareMathOperator{\diam}{diam}
\def\ds{\displaystyle}
\def\eps{{\varepsilon}}
\def\R{\mathbb{R}}
\def\A{\mathcal{A}}
\def\HH{\mathcal{H}}
\def\M{\mathcal{M}}
\def\Dr{D}
\newcommand{\be}{\begin{equation}}
\newcommand{\ee}{\end{equation}}
\newcommand{\bib}[4]{\bibitem{#1}{\sc#2: }{\it#3. }{#4.}}
\numberwithin{equation}{section}
\theoremstyle{plain}
\newtheorem{teo}{Theorem}[section]
\newtheorem{lemma}[teo]{Lemma}
\newtheorem{prop}[teo]{Proposition}
\newtheorem{deff}[teo]{Definition}
\theoremstyle{remark}
\newtheorem{oss}[teo]{Remark}
\newtheorem{exam}[teo]{Example}
\def\widthh{5}
\def\widthhh{8}
\title{A free boundary problem arising in PDE optimization}
\author{Giuseppe Buttazzo, Edouard Oudet, Bozhidar Velichkov}
\date{May 26, 2015}
\begin{document}

\maketitle

\begin{abstract}
A free boundary problem arising from the optimal reinforcement of a membrane or from the reduction of traffic congestion is considered; it is of the form
$$\sup_{\int_D\theta\,dx=m}\ \inf_{u\in H^1_0(D)}\int_D\Big(\frac{1+\theta}{2}|\nabla u|^2-fu\Big)\,dx.$$
We prove the existence of an optimal reinforcement $\theta$ and that it has some higher integrability properties. We also provide some numerical computations for $\theta$ and $u$. 

\end{abstract}

\textbf{Keywords:}shape optimization, membrane reinforcement, free boundary, obstacle problems.

\textbf{2010 Mathematics Subject Classification:} 49J45, 35R35, 49M05, 35J25

\section{Introduction}\label{sintro}

In the present paper we consider a variational problem related to various models arising in several fields among which shape optimization, optimal transport and elasto-plasticity. Here below we discuss the statement of the problem in some of them, the original motivation for the paper being the first one, for which we refer to \cite{budm93}, \cite{bubu05} and references therein.

\medskip
{\it Optimal reinforcement of a membrane.} Let $\Dr\subset\R^d$ be a bounded open set with a smooth boundary and let $f\in L^2(\Dr)$; $\Dr$ represents (when $d=2$) a membrane, that we assume fixed at its boundary, and $f$ represents a given load. The deformation $u$ of the membrane is then obtained as the solution of the variational problem
$$\min\Big\{\int_\Dr\Big(\frac12|\nabla u|^2-fu\Big)\,dx\ :\ u\in H^1_0(\Dr)\Big\},$$
where the elasticity coefficient in $D$ is taken equal to $1$, or equivalently of the PDE
$$-\Delta u=f\quad\hbox{in}\quad\Dr,\qquad u=0\quad\hbox{on}\quad\partial\Dr.$$
We now want to rigidify the membrane by adding a reinforcement: this is modeled by a density function $\theta\ge0$ which corresponds to changing the elasticity coefficient of $\Dr$ from $1$ to $1+\theta$. The total amount $m$ of reinforcement is given and the ultimate goal is to increase as much as possible the rigidity of the membrane. This translates into the minimization of the elastic compliance, or equivalently into the maximization of the elastic energy, and we end up with the following optimization problem:
\be\label{membrane}
\sup_{\int_\Dr\theta\,dx=m}\ \inf_{u\in H^1_0(\Dr)}\int_\Dr\Big(\frac{1+\theta}{2}|\nabla u|^2-fu\Big)\,dx.
\ee

\medskip
{\it Reduction to a traffic congestion problem.} Consider $\Dr$ as an urban region with given traffic sources $f$; if $\sigma$ denotes the exiting traffic flux in $\Dr$ (or equivalently towards the boundary $\partial\Dr$) and if the cost of the traffic congestion in $\Dr$ is measured by the quantity $\int_D\frac12|\sigma|^2\,dx$ (see for instance \cite{cajisa}, \cite{bucagu}), we may obtain $\sigma$ by solving the minimization problem
$$\min\Big\{\int_\Dr\frac12|\sigma|^2\,dx\ :\ -\dive\sigma=f\hbox{ in }\Dr\Big\}.$$
Suppose that there is some given total amount $m$ of resources that can be spent in order to decrease the traffic congestion: we assume that investing the amount $\theta\,dx$ near a point $x_0$ produces a lower congestion, measured by the quantity $\frac{1}{2(1+\theta)}|\sigma|^2$. Therefore, minimizing the total congestion cost leads to the problem
$$\min\Big\{\int_\Dr\frac{1}{2}\frac{|\sigma|^2}{1+\theta}\,dx\ :\ -\dive\sigma=f\hbox{ in }\Dr\Big\},$$
whose dual form is well-known to coincide with the optimization problem \eqref{membrane}.

\medskip
The variational problem we deal with has been considered in the literature under various forms, below we mention two of them. 

\medskip
{\it The elastic-plastic torsion problem.} Let $\Dr\subset\R^2$ be the section of a thin rod which is being twisted and let $f$ be a constant corresponding to the twist of the rod. The function $u\in H^1_0(\Dr)$ is a potential related to the elastic energy of the twisted rod. The response of some materials in this situation is of elastic-plastic type which means that the gradient of the potential $u$ cannot grow beyond a certain threshold $\kappa$, i.e. $u$ solves the variational problem 
$$\min\Big\{\frac12\int_\Dr|\nabla u|^2\,dx-f\int_\Dr u\,dx\ :\ u\in H^1_0(\Dr),\ |\nabla u|\le\kappa\Big\},$$
the {\it elastic region} being given by $\{|\nabla u|<\kappa\}$ while the {\it plastic} one by $\{|\nabla u|=\kappa\}$.

In \cite{brezis} Brezis proved that the potential $u$ of the elastic-plastic torsion problem is in fact the solution of the problem
$$-\dive\big((1+\theta)\nabla u\big)=f\quad\text{in }\Dr,\qquad u=0\quad\text{on }\partial\Dr,$$
where $\theta$ is precisely the solution of the optimization problem \eqref{membrane} for some $m>0$. In fact $\theta$ in this case can be interpreted as the response of the material to the deformation of the rod. 

Several authors studied the regularity of the torsion function $u$ and the free boundary $\partial\{|\nabla u|<\kappa\}$. The optimal $C^{1,1}$ regularity of $u$ was proved by Wiegner \cite{wiegner} and Evans \cite{evansC11}, while Caffarelli and Friedman provided a detailed study of the free boundary in \cite{caffri}.

\medskip

{\it Optimal transport problem} In \cite{boubut} Bouchitt\'e and Buttazzo considered a problem similar to \eqref{membrane} without the underlying membrane, that is with $1+\theta$ replaced by $\theta$, and made the connection of the optimization problem \eqref{membrane} with the optimal transport theory. Several authors (see for instance \cite{depr}, \cite{deevpr}, \cite{fil}) studied the summability properties of the function $\theta$ (called {\it transport density}) in relation to the summability of the function $f$ (the positive and negative parts $f^+$ and $f^-$ are usually called {\it marginals}). The usual setting considered in the literature is with $D$ convex, Neumann conditions at $\partial\Dr$, together with the zero average condition $\int_\Dr f\,dx=0$. In our case the Dirichlet boundary condition requires additional regularity of $\partial\Dr$ to get summability properties of $\theta$. Moreover, the presence of the underlying membrane gives raise to a free boundary problem that we analyze in detail.

\medskip
The paper is organized as follows. The precise mathematical setting of the optimization problem and our main results are described in Section \ref{spres}, while the proof is carried out in Section \ref{proofsec04} . Section \ref{sobst} is devoted to the {\it elastic-plastic torsion problem}, in which $f=1$. Section \ref{snum} deals with some numerical computations providing the solutions $\bar u$ and $\bar\theta$ in some interesting situations.

\section{Setting of the problem and main result}\label{spres}

Let $\Dr\subset\R^d$ be a given bounded open set and $f\in L^2(\Dr)$.
For a given function $u\in H^1_0(\Dr)$ and $\theta \in L^1_+(\Dr):=\Big\{\theta\in L^1(\Dr)\ :\ \theta\ge 0\Big\}$, the functional $J_f:H^1_0(\Dr)\times L^1_+(\Dr)\to \R$ is defined as 
$$J_f(u,\theta)=\frac12\int_\Dr (1+\theta)|\nabla u|^2\,dx-\int_{\Dr}uf\,dx,$$
and the energy $E_f(\theta)$ is given by
\be\label{minu04}
E_f(\theta)=\min\Big\{J_f(u,\theta)\ :\ u\in H^1_0(\Dr)\Big\}.
\ee
The minimum in \eqref{minu04} is always achieved due to the fact that every minimizing sequence is bounded in $H^1_0(\Dr)$ and so weakly compact in $H^1_0(\Dr)$. The minimizer is also unique due to the strict convexity of the functional $J_f$ with respect to the first variable. If we denote it by $u_\theta$ we have that $u_\theta\in H^1_0(\Dr)$, $|\nabla u_\theta|\in L^2(\theta\,dx)$ and $u_\theta$ is the weak solution of 
\be\label{utheta04}
-\dive\big((1+\theta)\nabla u_\theta\big)=f\quad\text{in}\quad\Dr,\qquad u_\theta\in H^1_0(\Dr),
\ee
i.e. for every $\varphi\in H^1_0(\Dr)$ such that $|\nabla\varphi| \in L^2(\theta dx)$ we have 
$$\int_{\Dr}(1+\theta)\nabla u_\theta\cdot\nabla \varphi\,dx=\int_{\Dr}f\varphi\,dx.$$
Testing the equation \eqref{utheta04} with $\varphi=u_\theta$ and integrating by parts leads to
$$E_f(\theta)=J_f(u_\theta,\theta)=-\frac12\int_\Dr u_\theta f\,dx.$$

We consider the optimization problem 
\be\label{optbp04}
\max_{\theta\in\A_m} E_f(\theta),
\ee
where, for a given $m>0$, the admissible set $\A_m$ is given by
$$\A_m=\Big\{\theta\in L^1_+(\Dr)\ :\ \int_\Dr\theta(x)\,dx\le m\Big\}.$$
Due to the monotonicity of $E_f(\theta)$ with respect to $\theta$ the formulation of the problem with the alternative admissible set
$$\widetilde{\A}_m=\Big\{\theta\in L^1_+(\Dr)\ :\ \int_\Dr\theta(x)\,dx=m\Big\},$$
is equivalent to the one with $\A_m$.

\begin{oss}\label{fmeasure}
It is interesting to notice that the optimization problem \eqref{optbp04} is meaningful even if the load $f$ is assumed to be a measure; in this case we have to allow $\theta$ to be a measure too, and for every nonnegative measure $\theta$ the functional $J_f(u,\theta)$ and the energy $E_f(\theta)$ are defined by
\[\begin{split}
&J_f(u,\theta)=\frac12\int_\Dr(1+\theta)|\nabla u|^2\,dx-\int_\Dr u\,df,\qquad\forall u\in C^\infty_c(\Dr)\\
&E_f(\theta)=\inf\Big\{J_f(u,\theta)\ :\ u\in C^\infty_c(\Dr)\Big\}.
\end{split}\]
If $f$ is a measure of course we may have $E_f(\theta)=-\infty$ for some measures $\theta$; this happens for instance when $f$ concentrates on sets of dimension smaller than $d-1$ and $\theta$ is the Lebesgue measure. However, these ``singular'' measures $\theta$ are ruled out from our analysis because of the maximization problem \eqref{optbp04} we are dealing with.
\end{oss}

\begin{oss}
Testing the energy $E_f(\theta)$ by the function $u=0$ gives immediately the inequality
$$E_f(\theta)\le0\qquad\forall\theta\in\A_m.$$
A more careful analysis (see for instance \cite{boubut}) shows the better inequality
$$E_f(\theta)\le-\frac{I^2(f)}{2(|\Dr|+m)}\qquad\forall\theta\in\A_m$$
where
$$I(f)=\sup\Big\{\int_\Dr u\,df\ :\ u\in C^\infty_c(\Dr),\ |\nabla u|\le1\Big\}.$$
On the other hand, if $f\in H^{-1}(\Dr)$ the energy $E_f$ remains always bounded from below.:
\[\begin{split}
J_f(u,\theta)&\ge\frac12\int_\Dr(1+\theta)|\nabla u|^2\,dx-\|f\|_{H^{-1}(\Dr)}\|u\|_{H^1_0(\Dr)}\\
&\ge\frac12\int_\Dr|\nabla u|^2\,dx-\frac12\big(\|f\|^2_{H^{-1}(\Dr)}+\|u\|^2_{H^1_0(\Dr)}\big)\ge-\frac12\|f\|^2_{H^{-1}(\Dr)}.
\end{split}\]
Taking the minimum over $u\in H^1_0(\Dr)$ we obtain
$$E_f(\theta)\ge-\frac12\|f\|^2_{H^{-1}(\Dr)}\qquad\forall\theta\in\A_m.$$
When $f\in L^\infty(\Dr)$, setting $M=\|f\|_\infty$ and using the inequalities $E_f(\theta)\ge E_f(0)\ge E_M(0)$, which come from the maximum principle, we also have the inequality
$$E_f(\theta)\ge-\frac{\|f\|^2_{L^\infty}}{2}\int_{\Dr}w_\Dr\,dx,$$
where $w_\Dr$ is the solution of 
$$-\Delta w_\Dr=1\quad\hbox{in}\quad\Dr,\qquad w_\Dr\in H^1_0(\Dr).$$
\end{oss}

Our main result is the following.

\begin{teo}\label{mainth04}
Let $\Dr\subset\R^d$ be a bounded open set satisfying the external ball condition and let $f\in L^\infty(\Dr)$. Then there is a solution to the problem \eqref{optbp04}. Moreover, any solution $\theta$ of \eqref{optbp04} has the following properties:
\begin{enumerate}[(i)]
\item \emph{(higher integrability)} $\theta\in L^p(\Dr)$, for every $p\ge1$;
\item \emph{(min-max exchange)} the energy $E_f(\theta)$ satisfies
\be\label{optcond04}
E_f(\theta)=\min_{u\in H^1_0(\Dr)}\ \frac12\int_{\Dr}|\nabla u|^2\,dx+\frac{m}2\|\nabla u\|_{L^\infty}^2-\int_\Dr uf\,dx;
\ee
\item \emph{(regularity of the state function)} the solution $u_\theta$ of \eqref{utheta04} is also the minimizer of the right-hand side of \eqref{optcond04}. In particular, if $f\in C^2(\Dr)$ then $u_\theta\in C^{1,1}(\Dr)$;
\item \emph{(optimality condition)} $\theta=0$ a.e. on the set $\{|\nabla u_\theta|< \|\nabla u_\theta\|_{L^\infty}\}$.
\end{enumerate}
\end{teo}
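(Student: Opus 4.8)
The plan is to prove the four statements somewhat out of order, building up from the min-max exchange. First I would establish a weak duality inequality: for every $\theta\in\A_m$ and every $u\in H^1_0(\Dr)$ with $|\nabla u|\le 1$ pointwise, Cauchy–Schwarz in the form $\int_\Dr\theta|\nabla u|^2\,dx\le m$ gives, after the usual manipulation, that the right-hand side of \eqref{optcond04}, call it $\mathcal{G}(u)$, dominates $E_f(\theta)$; more precisely one shows $\sup_{\theta\in\A_m}E_f(\theta)\le\inf_u\mathcal{G}(u)$ by writing $E_f(\theta)=\inf_u J_f(u,\theta)$ and $J_f(u,\theta)=\tfrac12\int|\nabla u|^2+\tfrac12\int\theta|\nabla u|^2-\int uf\le\tfrac12\int|\nabla u|^2+\tfrac m2\|\nabla u\|_{L^\infty}^2-\int uf$. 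The reverse inequality is the substantial point. Here I would take $\bar u$ a minimizer of $\mathcal{G}$ (it exists since $\mathcal{G}$ is convex, coercive on $H^1_0$, and weakly lower semicontinuous — the term $\|\nabla u\|_{L^\infty}^2$ is convex and l.s.c. for weak $H^1$ convergence by Fatou on a subsequence of gradients, or more safely by l.s.c. of the $L^\infty$ norm under weak-$*$ convergence of gradients combined with the quadratic term forcing strong $L^2$-gradient convergence of minimizing sequences). Writing the Euler–Lagrange inequality for $\bar u$ produces a nonnegative multiplier: there exists a nonnegative measure (a priori) $\mu$ supported on $\{|\nabla\bar u|=\|\nabla\bar u\|_{L^\infty}\}$ with total mass $m$ such that $-\Delta\bar u-\dive(\text{something involving }\mu)=f$. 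Setting $\theta$ equal to this multiplier and checking $\bar u=u_\theta$ closes the loop and simultaneously yields (ii), (iii) (the identification $u_\theta=\bar u$), and (iv) (the support condition).

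The main obstacle is precisely the structure and integrability of the Lagrange multiplier $\theta$ attached to the constraint $\|\nabla u\|_{L^\infty}\le$ const, i.e. proving (i). The subgradient of $u\mapsto\tfrac m2\|\nabla u\|_{L^\infty}^2$ at $\bar u$ is, abstractly, of the form $-\dive(V)$ with $V$ a vector measure of the form $V=\lambda\,\frac{\nabla\bar u}{|\nabla\bar u|}$ for a nonnegative measure $\lambda$ concentrated on the contact set $\{|\nabla\bar u|=L\}$, $L:=\|\nabla\bar u\|_{L^\infty}$, with $|\lambda|(\Dr)=m$; then one wants to write $\lambda=\theta L\,dx$, i.e. to show $\lambda\ll\mathcal{L}^d$ with density in every $L^p$. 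I would get this from the equation itself: since $-\Delta\bar u-\dive(L^{-1}\nabla\bar u\,\lambda)=f$ with $f\in L^\infty$, and on the contact set $L^{-1}\nabla\bar u$ is a fixed-length field, the measure $\dive(L^{-1}\nabla\bar u\,\lambda)$ must lie in $H^{-1}+L^\infty$, which already forces $\lambda$ to charge no set of zero capacity and in fact (using $\bar u\in C^{1,1}$ when $f$ is smooth, hence $\bar u\in W^{2,p}$ for all $p$, so $\Delta\bar u\in L^p$) gives $\dive(L^{-1}\nabla\bar u\,\lambda)\in L^p$ for all $p$; combined with the external ball condition on $\Dr$ (needed precisely to control the measure near $\partial\Dr$, since the contact set may touch the boundary and one must rule out a singular part there) this upgrades $\lambda$, hence $\theta$, to $L^p(\Dr)$ for every $p<\infty$. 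The external ball condition enters through a barrier argument giving Lipschitz (indeed better) bounds on $\bar u$ up to the boundary, preventing concentration of $\theta$ on $\partial\Dr$.

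For (iii) I would argue that once $\theta$ is the multiplier, $\bar u$ satisfies exactly the weak formulation \eqref{utheta04} of the state equation for that $\theta$, and by uniqueness of $u_\theta$ they coincide; the $C^{1,1}$ regularity when $f\in C^2$ then follows by observing that $\bar u$ solves, on the noncontact (elastic) region $\{|\nabla\bar u|<L\}$, the equation $-\Delta\bar u=f$ with smooth right-hand side, while on the contact region $|\nabla\bar u|\equiv L$ pins the gradient; this is exactly the elastic–plastic torsion structure, and the optimal $C^{1,1}$ regularity is the Brezis–Evans–Wiegner result, which I would either cite directly or reprove via the standard penalization $|\nabla u|^2\le L^2$ and the Evans bound on $\|D^2 u\|$. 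Finally (iv) is immediate: $\theta$ equals (a multiple of) the multiplier measure $\lambda$ which by construction is concentrated on the contact set, so $\theta=0$ a.e. on $\{|\nabla u_\theta|<L\}$. Throughout, the one routine-but-delicate verification I would not belabor is the lower semicontinuity and coercivity of $\mathcal{G}$ ensuring existence of $\bar u$, and the symmetric standard estimate that minimizing sequences for $\mathcal{G}$ have gradients converging strongly in $L^2$ so that $\|\nabla\bar u\|_{L^\infty}\le\liminf\|\nabla u_n\|_{L^\infty}$.
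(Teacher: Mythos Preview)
Your overall strategy---minimize the dual functional $F_1$ directly, extract a Lagrange multiplier measure from the Euler--Lagrange relation, and identify it as the optimal $\theta$---is essentially the \emph{alternative} route the paper sketches in Section~\ref{alt04}, not the main proof. The paper's principal argument for Theorem~\ref{mainth04} proceeds instead by approximation: one solves the relaxed problem over $\A_{m,p}$ for $p>1$ (where the optimal $\theta_p$ is explicit in terms of the minimizer $u_p$ of $F_p$), establishes uniform $L^r$ bounds on $\theta_p$ via a boundary barrier (this is where the exterior ball condition enters) combined with the De~Pascale--Evans--Pratelli integral estimate, and then passes to the limit $p\to1$ using $\Gamma$-convergence of $F_p$ to $F_1$. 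Your approach buys a cleaner conceptual picture (duality plus multiplier); the paper's buys that the object whose integrability must be controlled is a \emph{function} $\theta_p$ from the outset, so one can do hard PDE estimates on it directly.

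The genuine gap in your proposal is the integrability step~(i). You assert that from $-\Delta\bar u-\dive(L^{-1}\nabla\bar u\,\lambda)=f$ and $\Delta\bar u\in L^p$ one obtains $\dive(L^{-1}\nabla\bar u\,\lambda)\in L^p$, and that this ``upgrades $\lambda$ to $L^p$''. But the implication
\[
\dive(\nabla\bar u\,\lambda)\in L^p\ \Longrightarrow\ \lambda\in L^p
\]
is exactly the nontrivial content of the transport-density theory (De~Pascale--Evans--Pratelli \cite{deevpr}, Santambrogio \cite{fil}); it is not a soft consequence of the equation, and your sketch provides no mechanism for it. A measure satisfying such a divergence equation can in principle concentrate along transport rays, and ruling this out is real work. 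Moreover, you invoke $\bar u\in C^{1,1}$ to get $\Delta\bar u\in L^p$, but that regularity requires $f\in C^2$, whereas the theorem assumes only $f\in L^\infty$; the relevant input is rather the Brezis--Stampacchia $W^{2,p}$ result, and even granting it the transport-density step remains. Tellingly, the paper itself carries out your alternative approach only for \emph{convex} $\Dr$ (the proof of Theorem~\ref{mainth04conv} in Section~\ref{alt04}), and for the exterior-ball case defers to unpublished optimal-transport arguments. So the route you propose, while natural, does not yield Theorem~\ref{mainth04} without substantially more than your sketch supplies.
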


In the case when the set $\Dr$ is convex we have a slightly stronger result. 

\begin{teo}\label{mainth04conv}
Let $\Dr\subset\R^d$ be a bounded convex set, $r\in(d,+\infty]$ and $f\in L^r(\Dr)$. Then all the conclusions of Theorem \ref{mainth04} hold. Moreover for the maximizer $\theta$ we have that $\theta\in L^r(\Dr)$. 
\end{teo}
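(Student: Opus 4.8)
The plan is to reduce everything to Theorem \ref{mainth04}, which is already established, and then upgrade the integrability of $\theta$ using convexity. First I would observe that a bounded convex set automatically satisfies the external ball condition (indeed, at every boundary point there is a supporting hyperplane, and one can fit an exterior ball of arbitrary radius against it), and that $L^r(\Dr)\subset L^\infty(\Dr)$ fails in general — but $L^r(\Dr)\subset L^2(\Dr)$ for $r\ge 2>d$ when $d\ge 2$, and for $d=1$ directly. Actually the subtle point is that Theorem \ref{mainth04} assumes $f\in L^\infty$, whereas here we only have $f\in L^r$. So the first real task is to re-examine the proof of Theorem \ref{mainth04} and check that, on a convex domain, the hypothesis $f\in L^r(\Dr)$ with $r>d$ suffices to run the same argument: existence of a maximizer follows from the direct method exactly as before (the admissible set $\A_m$ is weakly-$*$ compact as measures, $E_f$ is upper semicontinuous, and the a priori bounds on $E_f$ from the Remarks only need $f\in H^{-1}$, which holds since $r>d\ge 2d/(d+2)$); the min-max exchange identity \eqref{optcond04} and the optimality condition $\theta=0$ on $\{|\nabla u_\theta|<\|\nabla u_\theta\|_\infty\}$ are variational in nature and should go through verbatim once $u_\theta$ is known to be regular enough.

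The heart of the matter is therefore the higher-integrability statement $\theta\in L^r(\Dr)$, and this is where convexity must be used. The strategy I would follow is the one familiar from transport-density regularity (the references \cite{depr}, \cite{deevpr}, \cite{fil} cited in the introduction): from the optimality condition, $\theta$ is supported on the plastic set $\{|\nabla u_\theta|=\|\nabla u_\theta\|_\infty=:\kappa\}$, and on that set the Euler--Lagrange equation $-\dive((1+\theta)\nabla u_\theta)=f$ together with $|\nabla u_\theta|=\kappa$ lets one write, formally, $\dive(\theta\nabla u_\theta)=-f-\Delta u_\theta$, so that $\theta$ is transported along the gradient flow of $u_\theta$ with source $-f$. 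Convexity of $\Dr$ guarantees that $w_\Dr$ (hence $u_\theta$, which is squeezed between multiples of $w_\Dr$ by the maximum principle) has no bad boundary behaviour, and crucially it guarantees the concavity-type estimates on the distance function / on $u_\theta$ that make the push-forward of $f\,dx$ along the transport rays an $L^r$ function whenever $f\in L^r$. Concretely, I would: (1) show $u_\theta\in C^{1,1}(\Dr)$, or at least $W^{2,r}_{loc}$, using the equivalence with the elastic–plastic variational problem in \eqref{optcond04} and the regularity theory of Brezis–Evans–Wiegner (this needs only $f\in L^r$, $r>d$, for the $W^{2,r}$ bound, and convexity for the global statement); (2) represent $\theta$ as the transport density between $f^+$ and $f^-$-type marginals associated with $u_\theta$; (3) invoke the $L^r$-regularity of the transport density on convex domains, which is exactly the quoted result of De Pascale–Pratelli et al.

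The main obstacle I anticipate is step (2)–(3): rigorously identifying $\theta$ with a transport density and transferring the known summability results, because our setting has a Dirichlet (not Neumann) boundary condition and a nonzero net flux $\int_\Dr f\,dx\neq 0$ — the mass exits through $\partial\Dr$. This means the ``target marginal'' lives partly on the boundary, so one cannot cite the interior-convex-domain result as a black box; instead I would use that $\Dr$ is convex to extend/reflect or to treat $\partial\Dr$ as an absorbing set, and then run the same covering and change-of-variables estimate along the transport rays that underlies the $L^r$ bound. A cleaner alternative, avoiding transport theory altogether, would be to differentiate the relation $|\nabla u_\theta|^2=\kappa^2$ on the plastic set, obtaining $D^2u_\theta\,\nabla u_\theta=0$ there, plug into $-(1+\theta)\Delta u_\theta-\nabla\theta\cdot\nabla u_\theta=f$, and derive an ODE for $\theta$ along the (straight, by the eikonal equation) characteristics, whose solution is bounded in $L^r$ in terms of $\|f\|_{L^r}$ precisely because convexity forces the Jacobian of the characteristic flow to be monotone — this is really the same estimate in disguise, but phrased so that it can be made self-contained. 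Either way, convexity enters only to control that Jacobian, and that is the one place a genuinely new computation (beyond Theorem \ref{mainth04}) is required.
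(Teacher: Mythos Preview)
Your approach is valid and coincides with the paper's \emph{alternative} proof given in Section~\ref{alt04}: existence of an optimal measure via weak-$*$ compactness (Proposition~\ref{exitmu04}), Brezis--Stampacchia regularity $\bar u\in W^{2,r}$ (Theorem~\ref{regoth04}), derivation of the Monge--Kantorovich system~\eqref{monka}, and finally the transport-density summability results of \cite{deevpr}, \cite{fil}. Your worry about the Dirichlet condition and mass exiting through $\partial\Dr$ is legitimate; the paper simply absorbs this into the cited references rather than reproving it.

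The paper's \emph{primary} proof, however, takes a different and shorter route. It reuses the approximation scheme of Section~\ref{proofsec04}: for each $p>1$ one has the optimizer $\theta_p$ of Proposition~\ref{exisp}, and the De Pascale--Evans--Pratelli estimate (Lemma~\ref{apriorilemma}) bounds $\|\theta_p\|_{L^r}$ in terms of $\|f\|_{L^r}$ plus a boundary integral involving the mean curvature $H_{\partial\Dr}$. The single new observation is that when $\Dr$ is convex this boundary term has the favourable sign and can simply be dropped, yielding $\|\theta_p\|_{L^r}\le C_r(1+\|f\|_{L^r})$ uniformly in $p$; no barrier construction, and hence no $L^\infty$ hypothesis on $f$, is required. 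One then passes to the limit $p\to1$ exactly as in the proof of Theorem~\ref{mainth04}. This route is more self-contained (Lemma~\ref{apriorilemma} is proved within the paper) and completely sidesteps the boundary-marginal issue you anticipate; your route is more conceptual and ties the result directly to optimal transport, at the cost of relying on \cite{bresta}, \cite{deevpr}, \cite{fil} as black boxes.
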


Before we pass to the proof of Theorem \ref{mainth04} and Theorem \ref{mainth04conv} we give some preliminary remarks in order to clarify the assumptions on $\Dr$ and $f$ as well as to outline the main steps of the proof.

The {\it main difficulty} in the existence result Theorem \ref{mainth04} is the lack of compactness in the space of admissible functions $\A_m$. Our approach consists in considering the optimization problem \eqref{optbp04} on the admissible class
\be\label{amp}
\A_{m,p}=\Big\{\theta\in L^p(\Dr),\ \theta\ge0,\ \Big(\int_\Dr\theta^p\,dx\Big)^{1/p}\le m\Big\},
\ee
and then to pass to the limit as $p\to1$. In fact, due to the weak compactness of the unit ball in $L^p$ (for $p>1$) we can easily obtain the existence of an optimal reinforcement $\theta_p$ for
$$\max\big\{E_f(\theta)\ :\ \theta\in\A_{m,p}\big\}.$$
We discuss this problem in the beginning of Section \ref{proofsec04}. The rest of the proof is dedicated to the passage to the limit as $p\to1$. We notice that at this point we work with not just a maximising sequence of $L^1$ functions but with a family go maxima that obey some regularity properties we may use in order to get the desired convergence result. 

Our goal is to find a \emph{uniform estimate} for $\theta_p$ in some Lebesgue space $L^r(\Dr)$ with $r>1$. In order to do that we use an estimate from \cite{deevpr} in a slightly more general form. Roughly speaking, it is of the form 
$$\int_\Dr\theta_p^r\,dx\le C_r\int_\Dr|f(x)|^r\,dx-\int_{\partial\Dr}F_r(|\nabla u_{\theta_p}|)H_{\partial\Dr}\,d\HH^{d-1},$$
where $C_r$ is a constant depending on $r$, $F_r$ is a polynomial with positive coefficients depending only on $r$ and $H_{\partial\Dr}$ is the mean curvature of $\partial\Dr$. In the case when $\Dr$ is convex the boundary integral is positive, which immediately provides the uniform estimate which finally leads to the result of Theorem \ref{mainth04conv}. On the other hand, if $\Dr$ is not convex then we should estimate the boundary term in a uniform way. At this point the bound from below of the mean curvature is natural to be imposed, but we have to estimate also the gradient of the solution on the boundary. We do this by constructing a barrier for $u_{\theta_p}$ at each point of the boundary. In order to do this we use the external ball condition, which is stronger than just the bound on the mean curvature, and the assumption $f\in L^\infty$ which allows us to describe the barrier as an explicit solution in a suitably chosen domain. The construction of the barrier and the uniform estimate is proved in Section \ref{proofsec04}.

In the case when $f$ {\it is a constant} the proof of Theorem \ref{mainth04} can be obtained in a more direct way. In fact once the bound on the gradient is established on $\partial\Dr$ one can extend it in the interior of $\Dr$ by using the Payne-Philippin maximum principle \cite{paph}. In this case also the boundedness of the optimal $\theta$ is achieved. We notice that this result was already proved by Brezis in \cite{brezis} where the optimal $\theta$ is given through an explicit formula. 

Once we prove that the family $\theta_p$ is uniformly bounded in $L^r$, we take, as a natural candidate for a solution, the weak limit of $\theta_p$ as $p\to1$. In order to prove that the limit is a minimizer of \eqref{optbp04}, we use the characterisation of the torsion function $u_p$ as minima of functionals not depending only on $f$ and $p$. 

There is \emph{an alternative approach} to problem \eqref{optbp04}. In fact, the existence of an optimal reinforcement can be obtained for very general $\Dr$ and $f$ if we relax the problem to the space of finite Borel measures on the compact set $\overline\Dr$. Moreover, for an optimal measure $\theta$ the min-max exchange equality \eqref{optcond04} still holds. Now under the assumption that $\Dr$ is convex one can use the regularity result for $u_\theta$ proved in \cite{bresta} and then writing the equation for $\theta$ as an optimal transport problem in which $\theta$ corresponds to the density of transport rays one can obtain the absolute continuity and the summability of $\theta$ using the results from \cite{deevpr} and \cite{fil}. We discuss this approach in the separate Section \ref{alt04}.

\begin{oss}
It is interesting to analyze the behavior of the solution $u_m$ of \eqref{optcond04} as $m\to+\infty$. Suppose that $f\ge 0$ and set $v_m=mu_m$. We have that $v_m$ solves the minimization problem
$$\min_{v\in H^1_0(\Dr)}\ \frac1{2m}\int_{\Dr}|\nabla v|^2\,dx+\frac12\|\nabla v\|_{L^\infty(\Dr)}^2-\int_\Dr vf\,dx\;.$$
The $\Gamma$-limit, as $m\to+\infty$ of the cost functional in the last line is
$$\frac12\|\nabla v\|_{L^\infty(\Dr)}^2-\int_\Dr vf\,dx\;,$$
hence $v_m=mu_m$ converges in $L^2(\Dr)$ to the solution $\bar v$ of the minimization problem
$$\min_{v\in H^1_0(\Dr)}\ \frac12\|\nabla v\|_{L^\infty(\Dr)}^2-\int_\Dr vf\,dx\;,$$
which is given by $\bar v(x)=C_\Dr d_{\partial\Dr}(x)$, where $d_{\partial\Dr}(x)$ is the Euclidean distance from $x\in\Dr$ to the boundary $\partial\Dr$ and $C_\Dr=\int_\Dr d_{\partial\Dr}(x)f(x)\,dx$.
\end{oss}

\section{Existence of optimal reinforcement}\label{proofsec04}
In this section we carry out the proofs of Theorem \ref{mainth04} and Theorem \ref{mainth04conv}. In order to make the presentation as clear as possible we divide the main steps of the proof into different subsections.
 
\subsection{Approximating problems: existence and regularity for $p>1$}
In this section we consider the problem of finding an optimal reinforcement in the admissible class of reinforcements $\A_{m,p}$ given by \eqref{amp}. Precisely, for every $p>1$ and $f\in L^2(\Dr)$, we consider the optimization problem
\be\label{optp}
\max\big\{E_f(\theta)\ :\ \theta\in\A_{m,p}\big\}=\max_{\theta\in\A_{m,p}}\ \min_{u\in H^1_0(\Dr)}J_f(u,\theta).
\ee
Interchanging the $\sup$ and the $\inf$ in \eqref{optp} we obtain the inequality
\be\label{exchinfsup}
\sup_{\theta\in\A_{m,p}}\ \min_{u\in H^1_0(\Dr)}\ J_f(u,\theta)\le \inf_{u\in H^1_0(\Dr)}\ \sup_{\theta\in\A_{m,p}}J_f(u,\theta).
\ee
We now notice that in the H\"older inequality $\int \theta \varphi\,dx\le \|\theta\|_{L^p}\|\varphi\|_{L^q}$ the equality is achieved if and only if $\theta=\|\theta\|_{L^p}\|\varphi\|_{L^q}^{1-q}\varphi^{q-1}$. Thus, for a fixed $u\in H^1_0(\Omega)$ the supremum on the right-hand side of \eqref{exchinfsup} with respect $\theta\in\A_{m,p}$ can be computed explicitly:
$$\sup_{\theta\in\A_{m,p}}J_f(u,\theta)=\frac12\int_\Dr|\nabla u|^2\,dx+\frac{m}{2}\Big(\int_\Dr|\nabla u|^{2q}\,dx\Big)^{1/q}-\int_\Dr fu\,dx,$$
where $q=p/(p-1)$ is the dual exponent of $p>1$. If moreover $u\in W^{1,2q}_0(\Dr)$, then the supremum is in fact a maximum and is achieved for
$$\theta=m|\nabla u|^{2(q-1)}\Big(\int_\Dr|\nabla u|^{2q}\,dx\Big)^{-(q-1)/q}.$$
Thus, for a given $u$, we can recover in a unique way the candidate for optimal $\theta$ in \eqref{optp} and thus it is now sufficient to optimize in $u$. We make this argument precise in the following proposition.

\begin{prop}\label{exisp}
For every $p>1$ and $f\in L^2(\Dr)$ the optimization problem \eqref{optp} admits a unique solution $\theta_p$, given by
\be\label{thetapthunder}
\theta_p=m |\nabla u_p|^{2(q-1)}\Big(\int_\Dr|\nabla u_p|^{2q}\,dx\Big)^{-(q-1)/q},
\ee
where $q=p/(p-1)$ is the dual exponent of $p$ and $u_p$ is the solution of the auxiliary problem
\be\label{auxpbp}
\min\Big\{\frac12\int_\Dr|\nabla u|^2\,dx+\frac{m}{2}\Big(\int_\Dr|\nabla u|^{2q}\,dx\Big)^{1/q}-\int_\Dr fu\,dx\ :\ u\in H^1_0(\Dr)\Big\}.
\ee
Moreover, if $\Dr\subset\R^d$ is a bounded domain with $C^{1,\alpha}$ boundary and $f\in L^\infty(\Dr)$, then $u_p\in C^{1,\beta}(\bar \Dr)$ for a constant $\beta$ depending on $\alpha$, $p$, $d$, $\Omega$ and $\|f\|_{L^\infty}$. In particular, $\theta_p$ is H\"older continuous up to the boundary.
\end{prop}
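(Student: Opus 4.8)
\emph{Proof plan.} The plan is to turn the formal min-max exchange sketched above into a genuine saddle-point argument, and then to read off the regularity of $\theta_p$ from a quasilinear PDE. Write $G$ for the functional minimised in \eqref{auxpbp}. Since $\int_\Dr|\nabla u|^{2q}\,dx=+\infty$ for $u\in H^1_0(\Dr)\setminus W^{1,2q}_0(\Dr)$, the infimum is attained over $W^{1,2q}_0(\Dr)$, on which $G$ is coercive --- the term $\frac m2\|\nabla u\|_{L^{2q}}^2$ dominates $\int_\Dr fu\,dx\le C\|f\|_{L^2}\|\nabla u\|_{L^2}$, since $2q>2$ and $\Dr$ is bounded --- convex, and sequentially weakly lower semicontinuous; the direct method then yields a minimiser $u_p$, unique because $u\mapsto\frac12\|\nabla u\|_{L^2}^2$ is strictly convex. (If $f\equiv0$ then $u_p\equiv0$ and everything is trivial with $\theta_p\equiv0$, so assume $f\not\equiv0$; then $u_p\neq0$ and $\int_\Dr|\nabla u_p|^{2q}\,dx>0$.) Putting $\lambda_p:=m\big(\int_\Dr|\nabla u_p|^{2q}\,dx\big)^{-(q-1)/q}>0$, the Euler--Lagrange equation $\frac{d}{dt}\big|_{t=0}G(u_p+t\vf)=0$ for $\vf\in W^{1,2q}_0(\Dr)$ reads
$$\int_\Dr\big(1+\lambda_p|\nabla u_p|^{2(q-1)}\big)\nabla u_p\cdot\nabla\vf\,dx=\int_\Dr f\vf\,dx,$$
so $u_p$ solves \eqref{utheta04} with $\theta=\theta_p:=\lambda_p|\nabla u_p|^{2(q-1)}$, which is precisely \eqref{thetapthunder}; moreover $2(q-1)p=2q$ gives $\theta_p\in L^p(\Dr)$ with $\|\theta_p\|_{L^p}=m$, hence $\theta_p\in\A_{m,p}$ and $|\nabla u_p|\in L^2(\theta_p\,dx)$.

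Next I would show that $\theta_p$ solves \eqref{optp}. By \eqref{exchinfsup} and the identity $\sup_{\theta\in\A_{m,p}}J_f(u,\theta)=G(u)$, valid for every $u\in H^1_0(\Dr)$ (again the equality case of H\"older's inequality), one has $\max_{\theta\in\A_{m,p}}E_f(\theta)\le\inf_{u\in H^1_0(\Dr)}G(u)=G(u_p)$. For the opposite inequality, test the Euler--Lagrange equation with $\vf=u-u_p$ for $u\in W^{1,2q}_0(\Dr)$ and complete squares; using $\frac12\int_\Dr(1+\theta_p)|\nabla u_p|^2\,dx=\frac12\int_\Dr fu_p\,dx=-G(u_p)$, this gives
$$J_f(u,\theta_p)=\frac12\int_\Dr(1+\theta_p)\,|\nabla(u-u_p)|^2\,dx+G(u_p)\ \ge\ G(u_p),$$
with equality only at $u=u_p$, so $\inf_{u\in W^{1,2q}_0(\Dr)}J_f(u,\theta_p)=G(u_p)$. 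Provided $W^{1,2q}_0(\Dr)$ is dense in energy in $\{v\in H^1_0(\Dr):|\nabla v|\in L^2(\theta_p\,dx)\}$ --- i.e. no Lavrentiev gap occurs, which is automatic once $\theta_p\in L^\infty(\Dr)$ --- this yields $E_f(\theta_p)=G(u_p)=\max_{\theta\in\A_{m,p}}E_f(\theta)$, so $\theta_p$ is optimal. Uniqueness is then immediate: any maximiser $\theta'$ satisfies $G(u_p)=E_f(\theta')\le J_f(u_p,\theta')\le\sup_{\theta\in\A_{m,p}}J_f(u_p,\theta)=G(u_p)$, so $\theta'$ realises the equality in H\"older's inequality over $\A_{m,p}$ against the nontrivial weight $|\nabla u_p|^2$, which forces $\theta'$ to be the function \eqref{thetapthunder}.

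For the regularity statement, note that with $u_p$ fixed $\lambda_p$ is a positive constant, so $u_p$ is a weak solution of the quasilinear equation $-\dive\big((1+\lambda_p|\nabla u_p|^{2q-2})\nabla u_p\big)=f$ in $\Dr$, $u_p\in H^1_0(\Dr)$. This operator is of $p$-Laplacian (Orlicz) type, its structure function $g(t)=t+\lambda_p t^{2q-1}$ satisfying $1\le tg'(t)/g(t)\le 2q-1$; the energy bound $G(u_p)\le G(0)=0$ controls $\int_\Dr|\nabla u_p|^{2q}\,dx$, hence $\lambda_p$, in terms of the data. Under the extra hypotheses $\partial\Dr\in C^{1,\alpha}$ and $f\in L^\infty(\Dr)$, I would invoke Lieberman's boundary regularity theory for equations of this type --- first a global gradient bound via barriers near $\partial\Dr$, then a Campanato-type $C^{1,\beta}$ estimate in $C^{1,\alpha}$ boundary charts --- to get $u_p\in C^{1,\beta}(\overline\Dr)$ with $\beta$ depending only on $\alpha,p,d,\Dr,\|f\|_{L^\infty}$. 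Then $\nabla u_p$ is bounded and continuous up to $\partial\Dr$ and $\xi\mapsto|\xi|^{2(q-1)}$ is H\"older on bounded sets, so $\theta_p=\lambda_p|\nabla u_p|^{2(q-1)}$ is H\"older continuous on $\overline\Dr$.

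The main obstacle is this last step. Interior $C^{1,\beta}$ regularity for equations with $(2,2q)$-growth is classical (DiBenedetto, Tolksdorf, Lieberman), but the up-to-the-boundary estimate rests on Lieberman's technique: a Lipschitz bound obtained by comparing $u_p$ near each boundary point with an explicit barrier built from the $C^{1,\alpha}$ structure of $\partial\Dr$ and the bound $\|f\|_{L^\infty}$, followed by freezing coefficients and a perturbation argument. A secondary, purely technical issue --- the absence of a Lavrentiev gap for $v\mapsto J_f(v,\theta_p)$ when $\theta_p$ is known only to lie in $L^p(\Dr)$ --- is harmless here, since the optimality of $\theta_p$ is used together with the regularity hypotheses, under which $\theta_p\in L^\infty(\Dr)$.
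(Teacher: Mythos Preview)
Your proof is correct and follows essentially the same route as the paper's: both establish the min--max inequality \eqref{exchinfsup}, solve the auxiliary problem \eqref{auxpbp} by the direct method, read off $\theta_p$ from the Euler--Lagrange equation, verify that $u_p=u_{\theta_p}$ so that $E_f(\theta_p)=G(u_p)$ saturates the min--max, and then cite Lieberman for the $C^{1,\beta}$ boundary regularity. Your treatment is somewhat more explicit than the paper's in two places---the completing-squares identity showing $J_f(u,\theta_p)\ge G(u_p)$, and the uniqueness argument via the equality case of H\"older (the paper simply says uniqueness of $\theta_p$ follows from uniqueness of $u_p$)---and you flag a possible Lavrentiev gap when passing from test functions in $W^{1,2q}_0(\Dr)$ to all of $H^1_0(\Dr)$, a point the paper glosses over with ``and so $u_p$ is the unique minimizer of $J_f(\cdot,\theta_p)$ in $H^1_0(\Omega)$''; your resolution via $\theta_p\in L^\infty$ under the regularity hypotheses is adequate for the uses made of the proposition later in the paper.
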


\begin{proof}
As we already noticed, by exchanging the max and the min in the original problem \eqref{optp} we get 
\be\label{supminminsupp}
\sup_{\theta\in\A_{m,p}}\ \min_{u\in H^1_0(\Dr)}\ J_f(u,\theta)\le \inf_{u\in H^1_0(\Dr)}\Big\{\frac12\int_\Dr|\nabla u|^2\,dx+\frac{m}{2}\Big(\int_\Dr|\nabla u|^{2q}\,dx\Big)^{1/q}-\int_\Dr fu\,dx\Big\}.
\ee
By the direct methods of the calculus of variations we have that the infimum in the right-hand side of \eqref{supminminsupp} is achieved, the minimizer $u_p$ is unique and satisfies (weakly in $W^{1,2q}_0(\Dr)$) the equation 
$$-\Delta u_p-mC_p\Delta_{2q} u_p=f\quad\hbox{in }\Dr,\qquad u_p\in W^{1,2q}_0(\Dr),$$
where the constant $C_p$ is given by $\ds C_p=\|\nabla u_p\|_{L^{2q}}^{-2(q-1)}$ and, for $r>1$, $\Delta_r$ is the $r$-Laplacian operator
$$\Delta_r u=\dive\big(|\nabla u|^{r-2}\nabla u\big).$$
Defining $\theta_p$ as in \eqref{thetapthunder}, we get that $u_p$ satisfies 
$$-\dive\big((1+\theta_p)\nabla u_p\big)=f\quad\hbox{in }\Dr,\qquad u_p\in H^1_0(\Dr),$$
in sense of distributions in $\Dr$ and so $u_p$ is the unique minimizer of $J_f(\cdot,\theta_p)$ in $H^1_0(\Omega)$ and 
$$E_f(\theta_p)=J_f(u_p,\theta_p)=\frac12\int_\Dr|\nabla u_p|^2\,dx+\frac{m}{2}\Big(\int_\Dr|\nabla u_p|^{2q}\,dx\Big)^{1/q}-\int_\Dr fu_p\,dx,$$
which together with \eqref{supminminsupp} and the minimality of $u_p$ gives
$$\sup_{\theta\in\A_{m,p}}\ E_f(\theta)\le E_f(\theta_p).$$
By the choice of $\theta_p$ we have $\int_{\Dr}\theta_p^p\,dx=m^p$ and by the fact that $u_p$ is the unique minimizer of \eqref{auxpbp}, we get that $\theta_p$ is the unique solution of \eqref{optp}. 

The internal $C^{1,\beta}$-regularity of $u_p$ (and, as a consequence, the H\"older-regularity of $\theta_p$) is classical and can be found in \cite{ladura}, while the up to the boundary version was proved in Theorem 1 of \cite{lieberman}.
\end{proof}

\subsection{The limit, as $p\to1$, of the state functions $u_p$}
Let $\Dr\subset\R^d$ be an open set of finite measure, $f\in L^2(\Dr)$ be a given function, $m>0$ and $p>1$ two fixed numbers; we denote by $q=p/(p-1)$ the dual exponent of $p>1$.

We define the functional $F_p:L^2(\Dr)\to\R$ as 
\be\label{Fp04}
F_p(u)=\begin{cases}
\ds\frac12\int_\Dr|\nabla u|^2\,dx+\frac{m}{2}\Big(\int_\Dr|\nabla u|^{2q}\,dx\Big)^{1/q}-\int_\Dr fu\,dx,&\ds\qquad\hbox{if }u\in W^{1,2q}_0(\Dr),\\
+\infty,&\ds\qquad\hbox{otherwise},
\end{cases}
\ee
while $F_1: L^2(\Dr)\to \R$ is given by 
\be\label{F104}
F_1(u)=\begin{cases}
\ds\frac12\int_\Dr|\nabla u|^2\,dx+\frac{m}{2}\|\nabla u\|_{L^\infty(\Dr)}^2-\int_\Dr fu\,dx,&\ds\qquad\hbox{if }u\in W^{1,\infty}_0(\Dr),\\
+\infty,&\ds\qquad\hbox{otherwise}.
\end{cases}
\ee
For $p\ge1$ we denote by $u_p$ the unique minimizer of $F_p$. 

Our main objective in this subsection is to show that $u_p$ converges to $u_1$ in some suitable functional space (a natural candidate being $L^2(\Dr)$) and that the sequence of norms $\|\nabla u_p\|_{L^{2q}}$ does not degenerate as $p\to 1$, a fact that we use later in the uniform estimate of $\|\theta_p\|_{L^\infty}$. Before we continue with the result from this section we recall the notion of $\Gamma$-convergence introduced by De Giorgi: 

\begin{deff}
Let $X$ be a given metric space. We say that a sequence of functionals $F_n: X\to \R\cup\{+\infty\}$ $\Gamma$-converges to the functional $F: X\to \R\cup\{+\infty\}$, if 
\begin{itemize}
\item {$(\Gamma-\limsup)$} For every sequence $x_n\in X$ converging to $x\in X$ we have $$\ds F(x)\le \liminf F_n(x_n);$$
\item {$(\Gamma-\liminf)$} For every $x\in X$ there is a sequence $x_n\in X$ converging to $x$ and such that $$\ds F(x)\ge \limsup F_n(x_n).$$
\end{itemize}
\end{deff}

\begin{prop}\label{convergenceprop}
Let $\Dr\subset\R^d$ be an open set of finite measure and let $f\in L^2(\Dr)$. With the notation introduced in \eqref{Fp04} and \eqref{F104} we have:
\begin{enumerate}[(i)]
\item The sequence of functionals $(F_p)_{p>1}$ $\Gamma$-converges in $L^2(\Dr)$ to the functional $F_1$.
\item The sequence of minima $(u_p)_{p>1}$ converges strongly in $H^1(\Dr)$ to $u_1$. 
\item $\ds \lim_{p\to 1}\|\nabla u_p\|_{L^{2q}}=\|\nabla u_1\|_{L^\infty}$.
\end{enumerate}
\end{prop}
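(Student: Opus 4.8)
The plan is to prove the three statements together, using the standard pattern for $\Gamma$-convergence of convex functionals plus convergence of minimizers. Item (i) is the core; items (ii) and (iii) will follow from (i) together with equicoercivity and a uniqueness argument.

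For the $\liminf$ inequality in (i): given $u_p \to u$ in $L^2(\Dr)$ with $\liminf F_p(u_p) < +\infty$, I would first observe that along a subsequence realizing the $\liminf$ we may assume $F_p(u_p) \le C$. This bounds $\frac12\int_\Dr |\nabla u_p|^2\,dx - \int_\Dr f u_p\,dx$ from above (the middle term is nonnegative), and since $\int_\Dr f u_p\,dx$ is controlled by $\|f\|_{L^2}\|u_p\|_{L^2}$ with $u_p$ bounded in $L^2$, this gives a uniform $H^1_0$ bound; hence $u \in H^1_0(\Dr)$ and $\nabla u_p \rightharpoonup \nabla u$ weakly in $L^2$. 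For the $L^\infty$ term: fix any $q_0 > 1$; for $q < q_0$ Jensen/Hölder gives $\big(\int_\Dr |\nabla u_p|^{2q}\big)^{1/q} \ge |\Dr|^{1/q - 1/q_0}\big(\int_\Dr |\nabla u_p|^{2q_0}\big)^{1/q_0}$ (for $q \le q_0$), so from the bound on $F_p(u_p)$ we get a uniform bound on $\|\nabla u_p\|_{L^{2q_0}}$ as $p \to 1$ (i.e. $q \to \infty$, eventually $q \ge q_0$); thus $\nabla u_p \rightharpoonup \nabla u$ weakly in $L^{2q_0}$ and by lower semicontinuity of the norm $\|\nabla u\|_{L^{2q_0}} \le \liminf \|\nabla u_p\|_{L^{2q_0}} \le \liminf \|\nabla u_p\|_{L^{2q}}$ (Hölder again, since $q \ge q_0$ eventually). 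Letting $q_0 \to \infty$ yields $\|\nabla u\|_{L^\infty} \le \liminf \|\nabla u_p\|_{L^{2q}}$, and in particular $u \in W^{1,\infty}_0(\Dr)$. Combining with weak $L^2$ lower semicontinuity of the Dirichlet term and continuity of the linear term gives $F_1(u) \le \liminf F_p(u_p)$.

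For the $\limsup$ (recovery sequence) in (i): given $u \in W^{1,\infty}_0(\Dr)$, I would simply take the constant sequence $u_p = u$. Then $\int_\Dr |\nabla u|^2\,dx$ and $\int_\Dr f u\,dx$ are unchanged, and $\big(\int_\Dr |\nabla u|^{2q}\big)^{1/q} \to \|\nabla u\|_{L^\infty}^2$ as $q \to \infty$ by the standard convergence of $L^q$ norms to the $L^\infty$ norm on a finite measure space; hence $F_p(u) \to F_1(u)$, which gives $\limsup F_p(u_p) \le F_1(u)$ (in fact equality). If $u \notin W^{1,\infty}_0$ there is nothing to prove since $F_1(u) = +\infty$.

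For (ii) and (iii): equicoercivity is exactly the $H^1_0$ bound extracted above — any sequence with $\sup_p F_p(u_p) < +\infty$ (in particular the minimizers, since $F_p(u_p) \le F_p(u_1) \to F_1(u_1) < +\infty$ using $u_1$ as competitor) is precompact in $L^2(\Dr)$ by Rellich. By the fundamental theorem of $\Gamma$-convergence, every $L^2$-cluster point of $(u_p)$ minimizes $F_1$; since $F_1$ is strictly convex in $\nabla u$ modulo the quadratic part it has a unique minimizer $u_1$, so the whole family converges to $u_1$ in $L^2$. To upgrade to strong $H^1$ convergence: from $\Gamma$-convergence, $F_p(u_p) \to F_1(u_1)$; each of the three terms of $F_p(u_p)$ is $\liminf$-lower-semicontinuous (Dirichlet by weak $L^2$, the $L^{2q}$-term by the argument in the $\liminf$ step giving $\liminf \ge \|\nabla u_1\|_{L^\infty}^2$), and the linear term converges, so the sum of liminfs being at most the limit of the sums forces each term to converge; in particular $\int_\Dr |\nabla u_p|^2\,dx \to \int_\Dr |\nabla u_1|^2\,dx$, which together with weak convergence $\nabla u_p \rightharpoonup \nabla u_1$ in $L^2$ gives strong $H^1$ convergence. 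The same term-by-term convergence gives $\big(\int_\Dr |\nabla u_p|^{2q}\big)^{1/q} \to \|\nabla u_1\|_{L^\infty}^2$, i.e. $\|\nabla u_p\|_{L^{2q}} \to \|\nabla u_1\|_{L^\infty}$, which is (iii).

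The main obstacle is the liminf bound for the $L^\infty$ term: one must be careful that $q$ varies with $p$ while comparing to a fixed $L^{2q_0}$ norm, so the inequalities $\|\nabla u_p\|_{L^{2q_0}} \le \|\nabla u_p\|_{L^{2q}}$ (valid once $q \ge q_0$, by Hölder on a finite-measure domain, up to the harmless factor $|\Dr|^{1/q_0 - 1/q} \to 1$) have to be applied only in the tail $p \to 1$, and only afterwards does one send $q_0 \to \infty$. Everything else is routine convexity and lower-semicontinuity bookkeeping.
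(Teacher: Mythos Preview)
Your proposal is correct and follows essentially the same architecture as the paper's proof: $\Gamma$-$\liminf$ via weak $H^1_0$ compactness plus lower semicontinuity of the gradient term, $\Gamma$-$\limsup$ via the constant recovery sequence, then equicoercivity and the term-by-term convergence argument to upgrade to strong $H^1$ convergence and obtain (iii). The only technical difference is in the $\liminf$ bound for the $L^\infty$ term: the paper uses the dual characterization $\|\nabla v\|_{L^\infty}=\sup\{\int_\Dr\nabla v\cdot\nu:\nu\in C^\infty_c,\ \int|\nu|\le1\}$ and tests against fixed $\nu$, whereas you compare with a fixed $L^{2q_0}$ norm via H\"older and then let $q_0\to\infty$; both are standard and equally short. (Note a slip in your write-up: the H\"older comparison $\|\nabla u_p\|_{L^{2q_0}}\le|D|^{1/q_0-1/q}\|\nabla u_p\|_{L^{2q}}$ holds for $q\ge q_0$, not $q\le q_0$ as you first wrote---you use the correct direction later in the argument.)
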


\begin{proof}
The proof is well-known, let us give it for the sake of completeness. Suppose that
$$v_p\to v\quad\text{in}\quad L^2(\Dr)\qquad\text{and}\qquad\lim_{p\to 1} F_p(v_p)<+\infty.$$
Then there is a constant $C>0$ such that for $p>1$ small enough we have 
\be\label{lemmaXeq1}
\frac12\|\nabla v_p\|_{L^2}^2+\frac{m}2\|\nabla v_p\|_{L^{2q}}^2-\int_\Dr v_pf\,dx\le C.
\ee
Thus, by the Gagliardo-Nirenberg-Sobolev inequality we have that $v_p$ is uniformly bounded in $H^1_0(\Dr)$ and so we can suppose that $v_p$ converges weakly in $H^1_0(\Dr)$ to $v\in H^1_0(\Dr)$ (which gives the semicontinuity of the first term of \eqref{lemmaXeq1}). We now notice that 
$$\|\nabla v\|_{L^\infty(\Dr)}=\sup\Big\{\int_\Dr \nabla v\cdot \nu\,dx\ :\ \nu\in C^\infty_c(\Dr;\R^d),\ \int_\Dr |\nu|\,dx\le 1\Big\}.$$
On the other hand for a fixed $\nu\in C^\infty_c(\Dr;\R^d)$ with $\int_\Dr |\nu|\,dx\le 1$, we have 
\[\begin{split}
\int_\Dr\nabla v\cdot\nu\,dx
&=\lim_{p\to\infty}\int_\Dr\nabla v_p\cdot\nu\,dx\\
&\le\Big(\int_\Dr|\nabla v_p|^{2q}\,dx\Big)^{1/(2q)}\Big(\int_\Dr|\nu|^{2q/(2q-1)}\,dx\Big)^{(2q-1)/(2q)}\\
&\le\Big(\liminf_{p\to 1}\|\nabla v_p\|_{L^{2q}}\Big)\,\Big(\lim_{p\to1}\|\nu\|_{L^{2p/(p+1)}}\Big)\\
&\le\|\nu\|_{L^1}\,\liminf_{p\to1}\|\nabla v_p\|_{L^{2q}},
\end{split}\]
which proves the semicontinuity of the second term in \eqref{lemmaXeq1}.

In order or prove the $\Gamma-\limsup$ inequality consider a function $v\in W^{1,\infty}_0(\Dr)$ and take $v_p$ to be the constant family $v_p=v$, for every $p>1$. Since in this case we have $\|\nabla v\|_{L^{2q}}\to\|\nabla v\|_{L^\infty}$ as $q\to+\infty$, we obtain 
$$F(v)=\lim_{p\to1}F_p(v),$$
which concludes the proof of {\it (i)}. 

We now prove {\it (ii)}. Since $u_p$ is the minimizer of $F_p$ and $F_p(0)=0$, we have that $F_p(u_p)\le 0$, which gives 
$$\frac12\|\nabla u_p\|_{L^2(\Dr)}^2\le\int_\Dr u_p f\,dx\le\|u_p\|_{L^2}\|f\|_{L^2}\le \lambda_1(\Dr)^{-1/2}\|\nabla u_p\|_{L^2} \|f\|_{L^2},$$
which gives the uniform (in $p$) bound $\|\nabla u_p\|_{L^2}\le 2\|f\|_{L^2}\lambda_1(\Dr)^{-1/2}$. Thus, there is a subsequence of $u_p$ converging in $L^2$ to some $u\in L^2(\Dr)$. Since $F_p$ $\Gamma$-converges to $F_1$, $u$ is necessarily the minimizer $u_1$ of $F_1$ (and since for every subsequence the limit is the same $u_p$ converges to $u_1$ in $L^2$). Moreover, by the optimality of $u_p$ and the $\Gamma-\liminf$ inequality we have 
$$\limsup_{p\to1}F_p(u_p)\le \liminf_{p\to1}F_p(u_1)=F_1(u_1)\le \liminf_{p\to1}F_p(u_p),$$
which together with the fact that $\int_\Dr u_p f\,dx\to\int_\Dr uf\,dx$ gives that
$$\|\nabla u_1\|_{L^2}^2+m\|\nabla u_1\|_{L^\infty}^2=\lim_{p\to1}\Big\{\|\nabla u_p\|_{L^2}^2+m\|\nabla u_p\|_{L^{2q}}^2\Big\}.$$
Now since both the terms are semicontinuous with respect to the $L^2$ convergence of $u_p$ to $u_1$ we get that 
$$\|\nabla u_1\|_{L^2}^2=\lim_{p\to1}\|\nabla u_p\|_{L^2}^2\qquad\text{and}\qquad \|\nabla u_1\|_{L^\infty}^2=\lim_{p\to1}\|\nabla u_p\|_{L^{2q}}^2.$$
The first equality proves that $u_p\to u_1$ strongly in $H^1(\Dr)$, while the second equality is precisely {\it (iii)}.
\end{proof}

\subsection{Construction of a barrier for $u_p$}

In order to obtain uniform estimates for $u_p$ and $\theta_p$ we impose some mild geometric assumptions on the domain $\Dr$. Precisely we assume that {\it $\Dr$ satisfies a uniform external ball condition that is there is some $\rho>0$ such that for every point of the boundary $x_0\in\partial\Dr$ there is a ball $B_\rho(y_0)\subset\R^d\setminus\Dr$ such that $x_0\in\partial B_\rho(y_0)$}. This assumption allows us to construct a barrier for $u_p$ and thus to give an estimate on the gradient $|\nabla u_p|$ on the boundary $\partial\Dr$ in terms of the diameter of $\Dr$ and the largest possible $\rho$. In the case when $\Dr$ is convex we have an estimate only in terms of the diameter $\diam(\Dr)$. In this estimate we use a weak maximum (or comparison) principle which we state below in a general form. 

\begin{lemma}[Weak maximum principle]
Let $\Dr\subset\R^d$ be a bounded open set and let $G:[0,+\infty)\to[0,+\infty)$ be a convex function such that $G'(0)>0$. For a function $f\in L^2(\Dr)$ we denote by $u_{\Omega,f}$ the unique minimizer of the problem
$$\min\Big\{\frac12\int_{\Dr} G\big(|\nabla u|^2\big)\,dx-\int_\Dr uf\,dx\ :\ u\in H^1_0(\Dr)\Big\}.$$
\begin{enumerate}[(a)]
\item If $f,g\in L^2(\Dr)$ are such that $f\ge g$, then $u_{\Omega,f}\ge u_{\Omega,g}$.
\item If $\omega\subset\Omega$ is a bounded open set and $f\ge 0$ a.e. on $\Omega$, then $u_{\Omega,f}\ge u_{\omega,f}$. 
\end{enumerate}
\end{lemma}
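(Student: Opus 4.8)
The plan is to prove both items by a standard comparison (testing) argument based on the strict convexity of the functional $u\mapsto \frac12\int_\Dr G(|\nabla u|^2)\,dx-\int_\Dr uf\,dx$. The key elementary fact is that, since $G$ is convex with $G'(0)>0$, the map $t\mapsto G(t^2)$ is convex and the associated energy is strictly convex along the line segment between two competitors; hence its unique minimizer $u_{\Dr,f}$ is characterized by the Euler--Lagrange inequality (equality, since $H^1_0$ is a linear space) $\int_\Dr G'(|\nabla u_{\Dr,f}|^2)\nabla u_{\Dr,f}\cdot\nabla\varphi\,dx=\int_\Dr f\varphi\,dx$ for all $\varphi\in H^1_0(\Dr)$, and it is a minimizer of the energy. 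I will not dwell on these standard facts.

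For part (a), I would set $u=u_{\Dr,f}$, $v=u_{\Dr,g}$, and test with the competitor $\varphi=(v-u)^+\in H^1_0(\Dr)$, which vanishes outside the set $\{v>u\}$. Subtracting the two weak formulations gives
\[
\int_{\{v>u\}}\Big(G'(|\nabla v|^2)\nabla v-G'(|\nabla u|^2)\nabla u\Big)\cdot\nabla(v-u)\,dx=\int_\Dr (g-f)(v-u)^+\,dx\le 0,
\]
using $g\le f$ and $(v-u)^+\ge0$. On the other hand, the monotonicity of the vector field $\xi\mapsto G'(|\xi|^2)\xi$ (which follows from convexity of $\xi\mapsto G(|\xi|^2)$ together with $G'(0)>0$) forces the left-hand side to be $\ge0$, and strictly positive unless $\nabla(v-u)^+=0$ a.e.; combined with $(v-u)^+\in H^1_0(\Dr)$ this gives $(v-u)^+=0$, i.e. $v\le u$, which is $u_{\Dr,f}\ge u_{\Dr,g}$.

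For part (b), I would extend $u_{\omega,f}$ by zero to all of $\Dr$; since $f\ge0$ a.e. on $\Dr$, part (a) type reasoning applied inside $\Dr$ shows $u_{\Dr,f}\ge u_{\Dr,0}=0$, so in particular $u_{\Dr,f}\ge 0= u_{\omega,f}$ on $\Dr\setminus\omega$. Then I test with $\varphi=(u_{\omega,f}-u_{\Dr,f})^+$, observing that this function is supported in $\{u_{\omega,f}>u_{\Dr,f}\}\subset\omega$, hence lies in $H^1_0(\omega)$ as well as $H^1_0(\Dr)$, so it is an admissible test function for both weak formulations; subtracting them and using the monotonicity of the operator exactly as above yields $(u_{\omega,f}-u_{\Dr,f})^+=0$, i.e. $u_{\Dr,f}\ge u_{\omega,f}$.

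The only mildly delicate point is justifying that the (nonlinear, possibly degenerate or $q$-growth) vector field $\xi\mapsto G'(|\xi|^2)\xi$ is strictly monotone and that the competitors $(v-u)^+$ and $(u_{\omega,f}-u_{\Dr,f})^+$ have finite energy so that the testing is legitimate — for the applications in this paper $G(t)=t+\text{(const)}\,t^{q}$ up to the normalisations in \eqref{auxpbp}, so the needed integrability is automatic from $u_p\in W^{1,2q}_0$, but I would state the monotonicity estimate abstractly ($\big(G'(|\xi|^2)\xi-G'(|\eta|^2)\eta\big)\cdot(\xi-\eta)\ge c|\xi-\eta|^2$ in the relevant regime, or at worst $>0$ for $\xi\ne\eta$, using $G'\ge G'(0)>0$ and convexity), which is where essentially all the work sits. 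Everything else is the routine energy-comparison / maximum-principle scheme.
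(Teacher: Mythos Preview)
The paper states this lemma without proof, treating it as a standard fact. Your argument is the correct and classical one: test the difference of the two Euler--Lagrange equations with $(v-u)^+$ and use the monotonicity of $\xi\mapsto G'(|\xi|^2)\xi$. In fact, since $\xi\mapsto G(|\xi|^2)$ is convex and its Hessian is bounded below by $2G'(0)I$, you get the strong monotonicity
\[
\big(G'(|\xi|^2)\xi-G'(|\eta|^2)\eta\big)\cdot(\xi-\eta)\ge G'(0)\,|\xi-\eta|^2,
\]
which makes the conclusion $\nabla(v-u)^+=0$ immediate.

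The only point worth tightening is in part~(b), where you assert $\varphi=(u_{\omega,f}-u_{\Dr,f})^+\in H^1_0(\omega)$ because it is ``supported in $\{u_{\omega,f}>u_{\Dr,f}\}\subset\omega$''. Vanishing a.e.\ outside $\omega$ is not by itself enough to place an $H^1_0(\Dr)$ function in $H^1_0(\omega)$. The clean justification is that $u_{\Dr,f}\ge0$ and $u_{\omega,f}\ge0$ (both by part~(a) with $g=0$), so $0\le\varphi\le u_{\omega,f}\in H^1_0(\omega)$, and then the standard order-ideal property of $H^1_0$ (a nonnegative $H^1$ function dominated by an element of $H^1_0(\omega)$ belongs to $H^1_0(\omega)$) gives the conclusion. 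With this adjustment your proof is complete.
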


We are now in a position to construct our barrier.

\begin{lemma}[Pointwise boundary estimate]
Let $\Dr\subset\R^d$ be a bounded open set whose boundary $\partial \Dr$ is locally a graph of a $C^{1,\alpha}$ function for some $\alpha\in(0,1)$.
Let $x_0\in \partial\Dr$ and let $B_\rho(y_0)$ be a ball in $\R^d\setminus \Dr$ tangent to $\partial\Dr$ in $x_0$. Then, the minimizer $u_p$ of $F_p$ satisfies the gradient estimate 
\be\label{estextbal0}
\Big(1+m C_p |\nabla u_p(x_0)|^{2(q-1)}\Big)|\nabla u_p(x_0)|\le \|f\|_{L^\infty(\Dr)}\left(1+\frac{\diam(\Dr)}{\rho}\right)^{d-1}\diam(\Dr).
\ee
\end{lemma}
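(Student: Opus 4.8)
The plan is to compare the minimizer $u_p$ with an explicit radial barrier built on an annulus around the external ball $B_\rho(y_0)$. The first, and key, observation is that although the functional $F_p$ from \eqref{Fp04} is nonlocal, the minimizer $u_p$ solves the \emph{local} equation $-\Delta u_p-mC_p\Delta_{2q}u_p=f$ in $\Dr$ with the \emph{fixed} positive constant $C_p=\|\nabla u_p\|_{L^{2q}}^{-2(q-1)}$ (as noted in the proof of Proposition \ref{exisp}); by uniqueness of the weak solution of this strictly monotone elliptic equation, $u_p$ is the unique minimizer of the \emph{local} functional $u\mapsto\frac12\int_\Dr G_p(|\nabla u|^2)\,dx-\int_\Dr uf\,dx$ over $H^1_0(\Dr)$, where $G_p(t)=t+\frac{mC_p}{q}t^q$ is convex with $G_p'(0)=1>0$. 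Hence the weak maximum principle stated above applies with $G=G_p$. I will write $g(s)=\big(1+mC_p s^{2(q-1)}\big)s$ for $s\ge 0$: this is the magnitude of the flux $\nabla u+mC_p|\nabla u|^{2q-2}\nabla u$ when $|\nabla u|=s$, it is strictly increasing on $[0,+\infty)$, and the left-hand side of \eqref{estextbal0} is exactly $g\big(|\nabla u_p(x_0)|\big)$. One may assume $\|f\|_{L^\infty(\Dr)}>0$, the case $f\equiv 0$ being trivial.

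Next I would set $R=\rho+\diam(\Dr)$ and $A=B_R(y_0)\setminus\overline{B_\rho(y_0)}$. Since $B_\rho(y_0)\cap\Dr=\varnothing$ and $|x-y_0|\le|x-x_0|+|x_0-y_0|\le\diam(\Dr)+\rho=R$ for every $x\in\Dr$, one gets $\Dr\subset A$. Let $v$ be the unique minimizer of $u\mapsto\frac12\int_A G_p(|\nabla u|^2)\,dx-\|f\|_{L^\infty(\Dr)}\int_A u\,dx$ over $H^1_0(A)$; by rotational symmetry and uniqueness $v$ is radial, $v(x)=V(|x-y_0|)$, with $V\ge 0$ (comparison with $0$) and $V(\rho)=V(R)=0$. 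Applying part (a) of the weak maximum principle with the right-hand sides $\pm\|f\|_{L^\infty(\Dr)}$, then part (b) with $\omega=\Dr\subset A$, and using that the functional is even in $u$, one obtains $-v\le u_p\le v$ on $\Dr$. Since $x_0\in\partial\Dr\cap\partial B_\rho(y_0)\subset\partial\Dr\cap\partial A$, all three functions vanish at $x_0$; because $u_p$ is $C^{1,\beta}$ up to $\partial\Dr$ and $v$ is $C^{1,\beta}$ up to $\partial A$ near $x_0$ by the regularity used in Proposition \ref{exisp}, and since the tangential gradients of $u_p$ and of $v$ at $x_0$ vanish (each function vanishes on a $C^1$ hypersurface through $x_0$), comparing the interior normal derivatives at $x_0$ gives $|\nabla u_p(x_0)|\le|\nabla v(x_0)|=V'(\rho)$ (and $V'(\rho)\ge 0$ since $V\ge 0$ with $V(\rho)=0$).

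It then remains to bound $V'(\rho)$ through the radial Euler--Lagrange equation $-\big(r^{d-1}g(V'(r))\big)'=\|f\|_{L^\infty(\Dr)}\,r^{d-1}$ on $(\rho,R)$. Because $V\ge 0$ vanishes at both endpoints and $\|f\|_{L^\infty(\Dr)}>0$, $V$ attains a positive maximum at some $r^*\in(\rho,R)$ with $V'(r^*)=0$, and integrating from $\rho$ to $r^*$ (using $g(0)=0$) gives $\rho^{d-1}g\big(V'(\rho)\big)=\frac{\|f\|_{L^\infty(\Dr)}}{d}\big((r^*)^d-\rho^d\big)\le\|f\|_{L^\infty(\Dr)}R^{d-1}(R-\rho)$; dividing by $\rho^{d-1}$ and using $R-\rho=\diam(\Dr)$ together with $R/\rho=1+\diam(\Dr)/\rho$ yields $g\big(V'(\rho)\big)\le\|f\|_{L^\infty(\Dr)}\big(1+\diam(\Dr)/\rho\big)^{d-1}\diam(\Dr)$. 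Combining this with $|\nabla u_p(x_0)|\le V'(\rho)$ and the monotonicity of $g$ finally gives $g\big(|\nabla u_p(x_0)|\big)\le g\big(V'(\rho)\big)\le\|f\|_{L^\infty(\Dr)}\big(1+\diam(\Dr)/\rho\big)^{d-1}\diam(\Dr)$, which is exactly \eqref{estextbal0}. The only genuinely delicate step is the reduction in the first paragraph — identifying $u_p$ with the solution of a \emph{local} variational problem, which is what makes comparison with barriers legitimate; once that is in place, the radial barrier $v$ and the resulting ODE estimate are routine, the only minor technical care being the pointwise comparison of normal derivatives at $x_0$, which is justified by the up-to-the-boundary $C^{1,\beta}$ regularity from Proposition \ref{exisp}.
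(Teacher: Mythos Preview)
Your proof is correct and follows the same barrier argument as the paper: reduce to the local equation with the frozen constant $C_p$, compare $u_p$ with the radial solution on the annulus $B_{\rho+\diam(\Dr)}(y_0)\setminus\overline{B_\rho(y_0)}$ via the weak maximum principle, and integrate the radial ODE from $\rho$ to the maximum point. You are in fact more explicit than the paper on two points it leaves implicit --- the passage from $|u_p|\le v$ with equality at $x_0$ to $|\nabla u_p(x_0)|\le V'(\rho)$ via the $C^{1,\beta}$ regularity and the tangency of $\partial\Dr$ and $\partial B_\rho(y_0)$, and the use of the monotonicity of $g$ to transfer the flux bound from $V'(\rho)$ to $|\nabla u_p(x_0)|$.
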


\begin{proof}
Without loss of generality we assume $y_0=0$. Let $R$ be the diameter of $\Dr$. Then $\Dr\subset B_{R+\rho}\setminus\bar{B_\rho}$. We set for simplicity
$$M=\|f\|_{L^\infty}\;,\qquad C=mC_p=m\Big(\int_\Dr|\nabla u_p|^{2q}\,dx\Big)^{-1/p}$$
and we stress that $M$ and $C$ are fixed constants in the proof below. Let $u$ be the solution of the equation
$$-\dive\Big(\big(1+C|\nabla u|^{2(q-1)}\big)\nabla u\Big)=M\quad\hbox{in }\Dr,\qquad u\in W^{1,2q}_0(\Dr),$$
and let $U$ be the solution of 
\be\label{radialplap}
-\dive\Big(\big(1+C|\nabla U|^{2(q-1)}\big)\nabla U\Big)=M\quad\hbox{in }B_{R+\rho}\setminus\bar{B_\rho},\qquad U\in W^{1,2q}_0(B_{R+\rho}\setminus\bar{B_\rho}).
\ee
By the maximum principle, applied to the operator associated to the function $G(t)=t+Ct^{q-1}$, we have $|u_p|\le u\le U$. On the other hand, the solution $U$ of \eqref{radialplap} is radially symmetric, i.e. a function of $r$ that we still denote by $U$ and that solves
\be\label{radialplap1}
-r^{1-d}\partial_r\Big(r^{d-1}\big(1+C|\partial_r U|^{2(q-1)}\big)\partial_r U\Big)=M\quad\hbox{in }]\rho,\rho+R[,\qquad U(\rho)=U(\rho+R)=0.
\ee
Let $\rho_1\in ]\rho, \rho+R[$ be a point where $U$ achieves its maximum. Integrating \eqref{radialplap1} we have
$$\rho^{d-1}\big(1+C|\partial_r U(\rho)|^{2(q-1)}\big)\partial_r U(\rho)=\int_{\rho}^{\rho_1}M r^{d-1}\,dr\le M R(R+\rho)^{d-1},$$
which gives \eqref{estextbal0} since $\partial_r U(\rho)>0$.
\end{proof}

Taking the largest possible $\rho$ in the pointwise estimate \eqref{estextbal0} we obtain the following result. 

\begin{lemma}[Boundary estimate for a regular domain]\label{barrierlemma}
Let $\Dr\subset\R^d$ be a bounded open set whose boundary $\partial \Dr$ is locally a graph of a $C^{1,\alpha}$ function for some $\alpha\in(0,1)$.
\begin{enumerate}[(i)]
\item If $\Dr$ is convex, then we have 
$$\Big(1+m C_p\|\nabla u_p\|_{L^\infty(\partial\Dr)}^{2(q-1)}\Big)\|\nabla u_p\|_{L^\infty(\partial\Dr)}\le\diam(\Dr)\|f\|_{L^\infty(\Dr)}.$$
\item If $\Dr$ satisfies the external ball condition with radius $\rho$, then
$$\Big(1+m C_p\|\nabla u_p\|_{L^\infty(\partial\Dr)}^{2(q-1)}\Big)\|\nabla u_p\|_{L^\infty(\partial\Dr)}\le \|f\|_{L^\infty(\Dr)}\diam(\Dr)\left(1+\frac{\diam(\Dr)}{\rho}\right)^{d-1}.$$
\end{enumerate}
\end{lemma}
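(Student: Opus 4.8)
The plan is to deduce both estimates from the pointwise bound \eqref{estextbal0} of the previous lemma by passing to the supremum over the boundary point. Write $G(t)=t+mC_p t^{2q-1}=\big(1+mC_p t^{2(q-1)}\big)t$ for $t\ge0$, so that \eqref{estextbal0} reads $G\big(|\nabla u_p(x_0)|\big)\le R_0(\rho)$ with $R_0(\rho)=\|f\|_{L^\infty(\Dr)}\diam(\Dr)\big(1+\diam(\Dr)/\rho\big)^{d-1}$ independent of $x_0$. Since $\partial\Dr$ is locally a $C^{1,\alpha}$ graph, Proposition \ref{exisp} gives $u_p\in C^{1,\beta}(\bar\Dr)$, hence $|\nabla u_p|$ is continuous on the compact set $\partial\Dr$ and attains its maximum $\|\nabla u_p\|_{L^\infty(\partial\Dr)}$ at some $x_1\in\partial\Dr$; moreover $G$ is continuous and strictly increasing on $[0,+\infty)$ because $q>1$. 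Thus $G\big(\|\nabla u_p\|_{L^\infty(\partial\Dr)}\big)=G\big(|\nabla u_p(x_1)|\big)$, and it suffices to apply \eqref{estextbal0} at the single point $x_1$ with a suitable tangent exterior ball.

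For (ii), the uniform external ball condition with radius $\rho$ provides, at $x_1$, a ball $B_\rho(y_0)\subset\R^d\setminus\Dr$ tangent to $\partial\Dr$ at $x_1$; applying \eqref{estextbal0} at $x_1$ with this ball gives $G\big(\|\nabla u_p\|_{L^\infty(\partial\Dr)}\big)\le R_0(\rho)$, which is exactly (ii).

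For (i), I would first note that a convex domain with $C^{1,\alpha}$ boundary satisfies the external ball condition with \emph{every} radius $\rho>0$: the supporting hyperplane $\Pi$ of $\Dr$ at $x_1$ is, by $C^1$ regularity, the tangent plane at $x_1$, $\Dr$ lies in one of the two open half-spaces determined by $\Pi$, and the ball of radius $\rho$ centered on the normal line at distance $\rho$ from $x_1$ on the side opposite to $\Dr$ lies in the complementary open half-space — hence is disjoint from $\Dr$ — and is tangent to $\partial\Dr$ at $x_1$. Applying \eqref{estextbal0} at $x_1$ with this ball yields $G\big(\|\nabla u_p\|_{L^\infty(\partial\Dr)}\big)\le R_0(\rho)$ for every $\rho>0$; letting $\rho\to+\infty$ and using $R_0(\rho)\to\|f\|_{L^\infty(\Dr)}\diam(\Dr)$ gives (i).

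The argument is essentially bookkeeping once the pointwise estimate is available, so there is no serious obstacle; the only steps deserving care, and the places the hypotheses are really used, are (a) that $\|\nabla u_p\|_{L^\infty(\partial\Dr)}$ is genuinely attained, which relies on the up-to-the-boundary regularity $u_p\in C^{1,\beta}(\bar\Dr)$ of Proposition \ref{exisp} and hence on the $C^{1,\alpha}$ hypothesis on $\partial\Dr$, and (b) that the large exterior balls used in the convex case do satisfy the inclusion $\Dr\subset B_{R+\rho}(y_0)\setminus\overline{B_\rho(y_0)}$ with $R=\diam(\Dr)$ that enters the proof of \eqref{estextbal0}; this holds because $x_1\in\partial\Dr\cap\partial B_\rho(y_0)$ forces $|x-y_0|\le|x-x_1|+\rho\le\diam(\Dr)+\rho$ for every $x\in\Dr$.
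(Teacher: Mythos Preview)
Your argument is correct and follows essentially the same approach as the paper, which only says ``taking the largest possible $\rho$ in the pointwise estimate \eqref{estextbal0}''; you have merely filled in the details --- attainment of the boundary maximum via the $C^{1,\beta}(\bar\Dr)$ regularity of Proposition \ref{exisp}, and the convex case handled by letting $\rho\to+\infty$.
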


\subsection{Uniform estimate for $\theta_p$}
In this section we extend the boundary estimate for $\theta_p$ to the entire domain $\Dr$. We need a uniform estimate on the norm $\|u_p\|_{L^\infty}$, which is classical and so we give only the precise statement here.

\begin{lemma}\label{inftyestlem04}
Let $f\in L^r(\Dr)$ for some $r>d/2$. Then we have the estimate 
$$\|u_p\|_{L^\infty}\le \frac{C_d}{d/2-1/r}\|f\|_{L^r}|\Dr|^{d/2-1/r},$$
where $C_d$ is a dimensional constant.
\end{lemma}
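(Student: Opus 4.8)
The plan is to prove the estimate by a standard truncation (Stampacchia) argument applied to the variational characterization of $u_p$. The key observation is that $u_p$ is the minimizer of $F_p$, and in particular, since $\theta_p\ge 0$, it solves $-\dive\big((1+\theta_p)\nabla u_p\big)=f$ weakly in $H^1_0(\Dr)$; thus testing with $u_p$ or with truncations $(u_p-k)^+$ gives energy inequalities in which the extra term $\int_\Dr\theta_p|\nabla u_p|^2$ is nonnegative and can simply be dropped. Equivalently, comparing $F_p(u_p)$ with $F_p$ evaluated at a truncation of $u_p$ shows that $|u_p|$ is bounded above by the solution $w$ of $-\Delta w=|f|$ in $H^1_0(\Dr)$; so it suffices to prove the stated $L^\infty$ bound for the torsion-type problem with no reinforcement, i.e. for $-\Delta w=|f|$.

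First I would fix $k\ge 0$ and test the weak formulation $\int_\Dr(1+\theta_p)\nabla u_p\cdot\nabla\varphi\,dx=\int_\Dr f\varphi\,dx$ with $\varphi=(u_p-k)^+$, which is an admissible test function lying in $H^1_0(\Dr)$ (and whose gradient is supported on $A_k=\{u_p>k\}$, where it belongs to $L^2(\theta_p\,dx)$ as well). Dropping the nonnegative term $\int_{A_k}\theta_p|\nabla u_p|^2\,dx$, one gets
$$\int_{A_k}|\nabla (u_p-k)^+|^2\,dx\le\int_{A_k}|f|\,(u_p-k)^+\,dx.$$
Next, applying H\"older's inequality on the right with exponent $r$ for $f$ and the Sobolev inequality $\|(u_p-k)^+\|_{L^{2^*}}\le C_d\|\nabla(u_p-k)^+\|_{L^2}$ on the left (or the Sobolev inequality adapted to the relevant exponent when $d=2$), together with H\"older again to pass from $L^{2^*}$ to $L^1$ on $A_k$, yields a differential inequality of Stampacchia type:
$$\|\nabla(u_p-k)^+\|_{L^2}\le C\,\|f\|_{L^r}\,|A_k|^{\,1-\frac1r-\frac1{2^*}+\frac12}\cdot(\text{correcting exponents}),$$
which after inserting Sobolev once more gives $|A_h|\le C\,|A_k|^{1+\delta}/(h-k)^{\mu}$ for $h>k$, with $\delta>0$ precisely because $r>d/2$. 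By the classical Stampacchia lemma on decreasing functions, this forces $|A_k|=0$ for $k$ larger than an explicit threshold, which is exactly the asserted bound $\|u_p\|_{L^\infty}\le\frac{C_d}{d/2-1/r}\|f\|_{L^r}|\Dr|^{d/2-1/r}$; the same argument applied to $-u_p$ controls $\|u_p^-\|_{L^\infty}$. I would track the constants so that the dependence on $r$ appears only through the factor $1/(d/2-1/r)$, as in the statement, while everything else collapses into the dimensional constant $C_d$.

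The only mild obstacle is bookkeeping: getting the powers of $|A_k|$ right so that the Stampacchia iteration converges (this is where $r>d/2$ enters), handling the borderline Sobolev exponent when $d=2$ (use $L^s$ with $s$ large instead of $L^{2^*}$), and making sure the constant is genuinely uniform in $p$ — but the latter is automatic here, since the reinforcement term only helps and is discarded at the first step. No real difficulty is expected; this is a classical estimate, which is why only the statement is recorded in the paper.
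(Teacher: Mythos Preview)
The paper gives no proof of this lemma at all, stating only that the estimate ``is classical and so we give only the precise statement here.'' Your Stampacchia truncation argument is exactly the standard classical proof that the authors are implicitly invoking, and your observation that the reinforcement term $\int\theta_p|\nabla u_p|^2$ can simply be dropped (so the bound is uniform in $p$) is the only point specific to this setting; the proposal is correct.
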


We now give our main a priori estimate for $\theta_p$. The statement might appear as a generalization of an estimate by De Pascale-Evans-Pratelli, but the proof is precisely the one they gave in \cite{deevpr}. We reproduce the proof below for the sake of completeness and in order to show that the presence of a general functional $G$ and the non-convexity of the set $\Dr$ do not influence the final result. In fact the only difference is that since the domain is not convex there is a boundary term that appears in the final estimate.

\begin{lemma}[De Pascale-Evans-Pratelli]\label{apriorilemma}
Let $\Dr$ be a bounded open set with smooth boundary and let $f\in L^r(\Dr)$ for $r\ge2$ and $r>d/2$. Let $G:[0,+\infty)\to[0,+\infty)$ be a convex function such that $G'(0)>0$ and let $u\in C^{1}(\bar\Dr)\cap H^2_{loc}(\Dr)$ be the solution of 
\be\label{eqGaldogigievans}
-\dive\big(G'(|\nabla u|^2)\nabla u\big)=f\quad\hbox{in }\Dr,\qquad u=0\quad\hbox{on }\partial\Dr.
\ee
Then for every $\eps\in(0,1)$ we have the estimate
\[\begin{split}
\int_\Dr\big|G'(|\nabla u|^2)\big|^r|\nabla u|^2\,dx
&\le3\eps\int_\Dr\big|G'(|\nabla u|^2)\big|^r\,dx+\Big(\frac{(r-1)^r}{\eps^{r-1}}+\frac1{\eps^{2r-1}}\Big)\|u\|_{L^\infty}^r\|f\|_{L^r}^r\\
&\qquad\qquad-\frac{(r-1)^2\|u\|_{L^\infty}^2}\eps\int_{\partial\Dr}H\big|G'(|\nabla u|^2)\big|^r|\nabla u|^2\,d\HH^{d-1},
\end{split}\]
where $H$ is the mean curvature of $\partial\Dr$ with respect to the outer normal. 
\end{lemma}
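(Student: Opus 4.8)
The plan is to reproduce, in this slightly more general framework, the argument of De Pascale--Evans--Pratelli \cite{deevpr}. Set $a:=G'(|\nabla u|^2)$, so that $a\ge G'(0)>0$ by convexity of $G$, and write $X:=\int_\Dr a^r|\nabla u|^2\,dx$ for the quantity to be estimated; recall that $\sigma:=a\nabla u$ satisfies $-\dive\sigma=f$ with $u=0$ on $\partial\Dr$. A routine preliminary step is to reduce to the case in which $u$ is smooth enough for all the integrations by parts below to be legitimate — this is granted by the hypotheses $u\in C^1(\bar\Dr)\cap H^2_{loc}(\Dr)$, $f\in L^r$ with $r\ge2$, and the smoothness of $\partial\Dr$, possibly after freezing the coefficient $a$ or mollifying $f$, since the final estimate depends on $f$ only through $\|f\|_{L^r}$.

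First I would test the equation $-\dive(a\nabla u)=f$ against $a^{r-1}u$, which is admissible since it vanishes on $\partial\Dr$. An integration by parts gives
\[
X \;=\; \int_\Dr a^{r-1}u\,f\,dx \;-\;(r-1)\int_\Dr a^{r-1}u\,\nabla a\cdot\nabla u\,dx;
\]
using $a^{r-1}\nabla a=\tfrac1r\nabla(a^r)$ and integrating by parts once more brings $X$ back with a smaller coefficient, and solving for $X$ one obtains the identity $X=r\int_\Dr a^{r-1}uf\,dx+(r-1)\int_\Dr a^r\,u\,\Delta u\,dx$. The first integral is already of the type handled at the end by H\"older and Young; the genuinely second-order term $\int_\Dr a^r u\Delta u\,dx$ cannot be reduced by any further elementary integration by parts without running into a circular identity, so it must be attacked by a Bochner-type argument.

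The new input is therefore a bound for $\int_\Dr a^r|D^2u|^2\,dx$. I would get it by differentiating the equation, $-\dive\big(a\nabla u_k+(\partial_k a)\nabla u\big)=\partial_k f$, testing against $a^{r-1}\partial_k u$, summing over $k$ and integrating by parts: the interior part produces $\int_\Dr a^r|D^2u|^2\,dx$ plus further terms that are nonnegative because $G$ is convex ($G''\ge0$, $a>0$) and may be discarded, while the right-hand side $\int_\Dr a^{r-1}\nabla f\cdot\nabla u\,dx$ (legitimate once $f$ has been mollified) is integrated by parts once more onto $u$, producing terms of $\|u\|_{L^\infty}\|f\|_{L^r}$-type. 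The boundary integral is treated through the identity $\partial_\nu|\nabla u|^2=2\,\partial_\nu u\,\Delta u-2H|\nabla u|^2$ on $\partial\Dr$, valid since $u=0$ there forces $\nabla u=(\partial_\nu u)\nu$; its $-2H|\nabla u|^2$ part yields a boundary term proportional to $\int_{\partial\Dr}H\,a^r|\nabla u|^2\,d\HH^{d-1}$ — the single place where the convexity of $\Dr$ intervenes, and exactly the term that, being favourable, was dropped in \cite{deevpr}. All told one obtains $\int_\Dr a^r|D^2u|^2\,dx\le A-c\int_{\partial\Dr}H\,a^r|\nabla u|^2\,d\HH^{d-1}$, with $A$ controlled by $\|u\|_{L^\infty}\|f\|_{L^r}$-type quantities plus an $\eps\int_\Dr a^r$-absorbable term.

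It then remains to combine this with the identity of the second paragraph. Estimating $\int_\Dr a^r u\Delta u\,dx\le\|u\|_{L^\infty}\big(\int_\Dr a^r\,dx\big)^{1/2}\big(\int_\Dr a^r|D^2u|^2\,dx\big)^{1/2}$ by Cauchy--Schwarz, inserting the Bochner bound and applying Young's inequality — with parameter tuned so that the resulting $\int_\Dr a^r$ is exactly $\eps\int_\Dr a^r$ — turns the prefactor $(r-1)$ into $(r-1)^2$ and yields precisely the boundary term $-\tfrac{(r-1)^2\|u\|_{L^\infty}^2}{\eps}\int_{\partial\Dr}H a^r|\nabla u|^2\,d\HH^{d-1}$ together with a multiple of $\tfrac{(r-1)^2\|u\|_{L^\infty}^2}{\eps}A$; this last term, estimated by H\"older $\int_\Dr a^{r-1}g\,dx\le\big(\int_\Dr a^r\,dx\big)^{(r-1)/r}\|g\|_{L^r}$ and Young once more (now with an extra $\eps^{-1}$ already present), contributes the $\eps^{-(2r-1)}$ term, while the leftover $r\int_\Dr a^{r-1}uf\,dx$, estimated the same way, contributes the $(r-1)^r\eps^{-(r-1)}$ term; collecting the three $\eps\int_\Dr a^r$ contributions and moving them to the left gives the statement. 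The hard part is the Bochner step and especially the accounting of the boundary integrals: one must verify that no boundary term other than the mean-curvature one survives (all the rest vanishing because $u=0$ on $\partial\Dr$, or being absorbed above) and that the $G''$-contributions organize so that the final bound involves $G$ only through $G'$; once this is done, the precise constants $(r-1)^r$, $(r-1)^2$, $\eps^{-(r-1)}$, $\eps^{-(2r-1)}$ follow from tracking the powers of $r-1$.
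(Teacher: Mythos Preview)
Your outline follows the same three--beat structure as the paper (test the equation against $\sigma^{r-1}u$, obtain a second--order ``Bochner'' estimate, then close by H\"older/Young), but the middle portion is organised differently, and the detour you take is both unnecessary and the likely source of constant mismatches.

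After the first integration by parts the paper keeps the cross term
\[
(r-1)\int_\Dr u\,\sigma^{r-1}\,\nabla\sigma\cdot\nabla u\,dx
\]
exactly as it stands, applies Young's inequality $(r-1)\|u\|_{L^\infty}\,\sigma^{r/2}\cdot\sigma^{(r-2)/2}|\nabla\sigma\cdot\nabla u|\le \eps\,\sigma^r+\eps^{-1}(r-1)^2\|u\|_{L^\infty}^2\,\sigma^{r-2}|\nabla\sigma\cdot\nabla u|^2$, and then bounds the quantity $\int_\Dr\sigma^{r-2}|\nabla\sigma\cdot\nabla u|^2\,dx$. That bound is obtained not by differentiating the equation and testing with $\sigma^{r-1}u_k$, but by multiplying the equation by $\dive(\sigma^{r-1}\nabla u)$ and swapping the order of differentiation via a double integration by parts. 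The point of this organisation is twofold: the boundary contributions from the two successive integrations by parts cancel except for the single clean term $\int_{\partial\Dr}\sigma^r u_n(\Delta u-u_{nn})\,d\HH^{d-1}=\int_{\partial\Dr}H\sigma^r|\nabla u|^2\,d\HH^{d-1}$ (no $G''$ and no $f$ appear on the boundary), and in the interior expansion the positive term one actually needs is $(r-1)\sigma^{r-2}|\nabla\sigma\cdot\nabla u|^2$ itself, not $\sigma^r|D^2u|^2$.

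Your route instead integrates the cross term once more to reach $(r-1)\int_\Dr a^r u\,\Delta u\,dx$, then invokes Cauchy--Schwarz and a bound on $\int_\Dr a^r|D^2u|^2\,dx$. This can be made to work, but it costs you: the step $|\Delta u|^2\le d\,|D^2u|^2$ introduces a dimensional constant; your Bochner identity, organised as ``differentiate and test with $a^{r-1}u_k$'', produces an extra boundary piece $\int_{\partial\Dr}f\,a^{r-1}u_n\,d\HH^{d-1}$ from integrating $\int a^{r-1}\nabla f\cdot\nabla u$ by parts, which has to be cancelled against a matching term hidden in the left--hand boundary integral (it does cancel, but you have not shown this); and when you finally estimate $-\int f\,\dive(a^{r-1}\nabla u)$ you meet a term $\int f\,a^{r-1}\Delta u$ that must be absorbed, again with a $d$--dependent factor. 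So while the scheme is sound, the precise constants $3\eps$, $(r-1)^r\eps^{1-r}$, $\eps^{1-2r}$ and the exact boundary coefficient $(r-1)^2\|u\|_{L^\infty}^2/\eps$ of the statement come out cleanly only along the paper's path; along yours you would prove a qualitatively identical inequality with slightly worse constants.
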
 

\begin{proof}
We first notice that under the assumptions that $\Dr$ and $f$ are smooth, we have that $u$ is also smooth up to the boundary $\partial\Dr$. Thus we can use the multiplication technique from \cite{deevpr} in order to obtain the $L^p$ estimate of $G'(|\nabla u|^2)$. For the sake of simplicity we set $\sigma=G'(|\nabla u|^2)$. We first test the equation \eqref{eqGaldogigievans} with the function $\sigma^{r-1}u\in H^1_0(\Dr)$ obtaining 
\be\label{eqride}
\begin{split}
\int_\Dr\sigma^r|\nabla u|^2\,dx+(r-1)\int_\Dr u\sigma^{r-1}\nabla u\cdot\nabla\sigma\,dx
&=\int_\Dr fu\sigma^{r-1}\,dx\\
&\le\|u\|_{L^\infty}\|f\|_{L^r}\left(\int_\Dr\sigma^r\,dx\right)^{1-1/r}.
\end{split}
\ee
Our main objective is to estimate the second term of the left-hand side. Multiplying \eqref{eqGaldogigievans} by $\dive(\sigma^{r-1}\nabla u)=(\sigma^{r-1}u_j)_j$ we get
\be\label{vasco}
\begin{split}
\int_\Dr(\sigma u_i)_i (\sigma^{r-1}u_j)_j\,dx
&=-\int_\Dr f(\sigma^{r-1}u_j)_j\,dx=-\int_\Dr f(\sigma^{r-2}\sigma u_j)_j\,dx\\
&=-\int_\Dr f\sigma^{r-2}(\sigma u_i)_i\,dx-(r-2)\int_\Dr f\sigma^{r-2}\sigma_ju_j\,dx\\
&\le\int_\Dr f^2\sigma^{r-2}\,dx+(r-2)\int_\Dr|f|\,\sigma^{r-2}|\nabla\sigma\cdot\nabla u|\,dx.
\end{split}
\ee
On the other hand, we can integrate by parts the left-hand side getting 
\[\begin{split}
\int_\Dr(\sigma u_i)_i(\sigma^{r-1}u_j)_j\,dx
&=-\int_\Dr\sigma u_i(\sigma^{r-1}u_j)_{ji}\,dx+\int_{\partial\Dr}(\sigma^{r-1}u_j)_j\sigma u_i n_i\,d\HH^{d-1}\\
&=\int_\Dr(\sigma u_i)_j(\sigma^{r-1}u_j)_i\,dx+\int_{\partial\Dr}\Big[(\sigma^{r-1}u_j)_j\sigma u_i n_i-\sigma u_i(\sigma^{r-1}u_j)_i n_i\Big]\,d\HH^{d-1}\\
&=\int_\Dr(\sigma u_i)_j(\sigma^{r-1}u_j)_i\,dx+\int_{\partial\Dr}\sigma^r\big(u_{jj}u_i n_i-u_{ij}u_i n_j\big)\,d\HH^{d-1}\\
&=\int_\Dr(\sigma u_i)_j(\sigma^{r-1}u_j)_i\,dx+\int_{\partial\Dr}\sigma^r u_n (\Delta u-u_{nn})\,d\HH^{d-1},
\end{split}\]
where $u_n$ and $u_{nn}$ indicate the first and the second derivatives in the direction of the exterior normal $n$ to $\partial\Dr$. Developing the term in the volume integral we have 
\[\begin{split}
\int_\Dr(\sigma u_i)_i(\sigma^{r-1}u_j)_j\,dx
&=\int_\Dr\Big(\sigma^r\|Hess(u)\|_2^2+(r-1)\sigma^{r-2}|\nabla\sigma\cdot\nabla u|^2+r\sigma^{r-1}\sigma_j u_i u_{ij}\Big)\,dx\\
&\qquad\qquad+\int_{\partial\Dr}\sigma^r u_n(\Delta u-u_{nn})\,d\HH^{d-1},
\end{split}\]
where $\|Hess(u)\|_2^2=\sum_{i,j}u_{ij}^2\le 0$. We first notice that $\sigma_j=2u_k u_{kj} G''(|\nabla u|^2)$ and thus, by the convexity of $G$ we have $\sigma_j u_i u_{ij}\ge 0$. On the other hand, on the boundary $\partial \Dr$ we have 
$$\Delta u=u_{nn}+Hu_n,$$
where $H$ is the mean curvature. Thus we have the inequality 
\[\begin{split}
\int_\Dr(\sigma u_i)_i(\sigma^{r-1}u_j)_j\,dx
&\ge\int_\Dr(r-1)\sigma^{r-2}|\nabla\sigma\cdot\nabla u|^2\,dx+\int_{\partial\Dr}H\sigma^r|\nabla u|^2\,d\HH^{d-1},
\end{split}\]
which together with \eqref{vasco} gives
\[\begin{split}
\int_\Dr\sigma^{r-2}|\nabla\sigma\cdot\nabla u|^2\,dx
&\le\int_\Dr|f|^2\sigma^{r-2}\,dx-\int_{\partial\Dr}H\sigma^r|\nabla u|^2\,d\HH^{d-1}\\
&\le\|f\|_{L^r}^2\|\sigma\|_r^{r-2}-\int_{\partial\Dr}H\sigma^r|\nabla u|^2\,d\HH^{d-1}.
\end{split}\]
Using \eqref{eqride} we can now repeatedly use the Young inequality $A^\alpha B^\beta \le \eps\alpha A+\eps^{\alpha/\beta}\beta B$ for $0\le \alpha,\beta\le 1$ and $\alpha+\beta=1$ obtaining
\[\begin{split}
\int_\Dr\sigma^r|\nabla u|^2\,dx
&\le(r-1)\int_\Dr\sigma^{r-1}\|u\|_{L^\infty}|\nabla u\cdot\nabla\sigma|\,dx+\|u\|_{L^\infty}\|f\|_{L^r}\|\sigma\|_{L^r}^{r-1}\\
&\le\eps\int_\Dr\sigma^r\,dx+\frac{(r-1)^2\|u\|_{L^\infty}^2}\eps\int_\Dr\sigma^{r-2}|\nabla u\cdot\nabla\sigma|^2\,dx+\|u\|_{L^\infty}\|f\|_{L^r}\|\sigma\|_{L^r}^{r-1}\\
&\le\eps\int_\Dr\sigma^r\,dx+\frac{(r-1)^2\|u\|_{L^\infty}^2}\eps\|f\|_{L^r}^2\|\sigma\|_r^{r-2}+\|u\|_{L^\infty}\|f\|_{L^r}\|\sigma\|_{L^r}^{r-1}\\
&\qquad\qquad\qquad-\frac{(r-1)^2\|u\|_{L^\infty}^2}\eps\int_{\partial\Dr}H\sigma^r|\nabla u|^2\,d\HH^{d-1}\\
&\le3\eps\int_\Dr\sigma^r\,dx+\Big(\frac{(r-1)^r}{\eps^{r-1}}+\frac1{\eps^{2r-1}}\Big)\|u\|_{L^\infty}^r\|f\|_{L^r}^r\\
&\qquad\qquad\qquad-\frac{(r-1)^2\|u\|_{L^\infty}^2}\eps\int_{\partial\Dr}H\sigma^r|\nabla u|^2\,d\HH^{d-1},
\end{split}\]
which is precisely the claim.
\end{proof}

In order to give a uniform estimate for $\theta_p$ on every domain satisfying the external ball condition and not only on the regular ones, we use an approximation argument. The following lemma that we use is well known in the $\gamma$-convergence theory.

\begin{lemma}\label{approxlemma05}
Suppose that $\Dr\subset\R^d$ is a bounded open satisfying the external ball condition and that $\Dr_n$ is a sequence of open sets of finite measure containing $\Dr$ and such that $|\Dr_n\setminus\Dr|\to0$. Let $f\in L^2(\Dr_1)$, $p\in[1,+\infty)$ be fixed and let $u_p$ be the minimizer of $F_p$ on $\Dr$ and $u_p^n$ on $\Dr_n$. Then the sequence $u_p^n$ converges to $u_p$ strongly in $H^1(\R^d)$ and $W^{1,2q}(\R^d)$. 
\end{lemma}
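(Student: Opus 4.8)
The strategy is the standard one for $\gamma$-convergence (or Mosco convergence) of energies under domain perturbation: identify the common ambient space, extract a weak limit of the minimizers, and then use the variational characterization of $u_p$ together with the measure convergence $|\Dr_n\setminus\Dr|\to 0$ to upgrade weak to strong convergence. First I would extend all functions by zero outside their respective domains, so that $u_p^n\in W^{1,2q}_0(\Dr_n)\subset W^{1,2q}_0(\Dr_1)$ and we may work in the fixed space $W^{1,2q}_0(\Dr_1)$ (equivalently in $W^{1,2q}(\R^d)$ and $H^1(\R^d)$). Testing with $0$ gives $F_p^n(u_p^n)\le 0$, and exactly as in the proof of Proposition \ref{convergenceprop}(ii) this yields a uniform bound $\|\nabla u_p^n\|_{L^2}\le 2\|f\|_{L^2}\lambda_1(\Dr_1)^{-1/2}$ and, from the $\|\nabla\cdot\|_{L^{2q}}^2$ term, a uniform bound on $\|\nabla u_p^n\|_{L^{2q}}$ as well. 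Hence, up to a subsequence, $u_p^n$ converges weakly in $H^1_0(\Dr_1)$ and in $W^{1,2q}_0(\Dr_1)$ to some $v$.

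Next I would identify $v=u_p$. Since $u_p^n=0$ a.e.\ on $\R^d\setminus\Dr_n\supset\R^d\setminus\Dr_1$ is automatic, the point is to show $v=0$ a.e.\ on $\Dr_1\setminus\Dr$: because $|\Dr_n\setminus\Dr|\to0$, for fixed $k$ we have $u_p^n=0$ a.e.\ on $\Dr_1\setminus\Dr_k$ for all $n\ge k$, and weak $H^1$ convergence (hence a.e.\ convergence of a further subsequence) forces $v=0$ a.e.\ on $\Dr_1\setminus\Dr_k$; letting $k\to\infty$ and using $|\Dr_k\setminus\Dr|\to0$ gives $v=0$ a.e.\ on $\Dr_1\setminus\Dr$, so $v\in W^{1,2q}_0(\Dr)$. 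For minimality: given any competitor $w\in W^{1,2q}_0(\Dr)$, it is also admissible for the problem on each $\Dr_n$, so $F_p^n(u_p^n)\le F_p^n(w)=F_p(w)$ (the functional values coincide since $w$ is supported in $\Dr$); by weak lower semicontinuity of $\|\nabla\cdot\|_{L^2}^2$ and $\|\nabla\cdot\|_{L^{2q}}^2$ and strong $L^2$ convergence of $u_p^n$ (via compact embedding) to handle the linear term, $F_p(v)\le\liminf_n F_p^n(u_p^n)\le F_p(w)$. Thus $v$ minimizes $F_p$ over $W^{1,2q}_0(\Dr)$, so $v=u_p$ by uniqueness, and the full sequence converges weakly (every subsequence has the same limit).

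Finally I would promote weak convergence to strong convergence. Choosing $w=u_p$ in the inequality above gives $\limsup_n F_p^n(u_p^n)\le F_p(u_p)$, while lower semicontinuity gives the reverse $\liminf$, so $F_p^n(u_p^n)\to F_p(u_p)$. Combined with $\int_{\Dr_n}f u_p^n\,dx\to\int_\Dr f u_p\,dx$ (from strong $L^2$ convergence), this yields
$$\|\nabla u_p^n\|_{L^2}^2+m\|\nabla u_p^n\|_{L^{2q}}^2\ \longrightarrow\ \|\nabla u_p\|_{L^2}^2+m\|\nabla u_p\|_{L^{2q}}^2.$$
Since each of the two nonnegative terms on the left is individually weakly lower semicontinuous, convergence of the sum forces convergence of each term separately: $\|\nabla u_p^n\|_{L^2}\to\|\nabla u_p\|_{L^2}$ and $\|\nabla u_p^n\|_{L^{2q}}\to\|\nabla u_p\|_{L^{2q}}$. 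Convergence of norms plus weak convergence in the uniformly convex spaces $H^1$ and $W^{1,2q}$ gives strong convergence of $\nabla u_p^n$ to $\nabla u_p$ in $L^2(\R^d)$ and $L^{2q}(\R^d)$, hence strong convergence of $u_p^n$ to $u_p$ in $H^1(\R^d)$ and $W^{1,2q}(\R^d)$. The case $p=1$ is slightly different since the $\|\nabla\cdot\|_{L^\infty}$ term must be treated as in Proposition \ref{convergenceprop}(i), but the same scheme applies; I expect the one delicate point to be precisely the identification $v\in W^{1,2q}_0(\Dr)$, i.e.\ that the zero extensions do not ``leak'' mass across the shrinking collar $\Dr_n\setminus\Dr$, which is exactly where the hypothesis $|\Dr_n\setminus\Dr|\to0$ is used.
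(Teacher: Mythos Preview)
Your overall scheme matches the paper's: extract a weak limit, identify it as $u_p$ via the variational inequality, and upgrade to strong convergence by showing convergence of the energies and hence of the norms. The paper carries out exactly these three steps.

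There is, however, a genuine gap in your identification step. You argue that $v=0$ a.e.\ on $\Dr_1\setminus\Dr$ and then write ``so $v\in W^{1,2q}_0(\Dr)$''. That implication is \emph{not} automatic: for a general open set $\Dr$, a Sobolev function on $\R^d$ that vanishes a.e.\ outside $\Dr$ need not lie in $H^1_0(\Dr)$ (think of a slit domain). This is precisely the point at which the external ball condition on $\Dr$ enters, and you never invoke it; your closing remark attributes the delicacy to the shrinking collar $\Dr_n\setminus\Dr$, but the hypothesis $|\Dr_n\setminus\Dr|\to0$ only buys you $v=0$ a.e.\ outside $\Dr$, not membership in $H^1_0(\Dr)$. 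The paper deals with this by first observing that the $u_p^n$ are uniformly bounded in $L^\infty$ and then citing a known result (their reference \cite{deve}) to the effect that, under the assumed boundary regularity of $\Dr$, $u\in H^1(\R^d)$ with $u=0$ a.e.\ on $\R^d\setminus\Dr$ implies $u\in H^1_0(\Dr)$.

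A smaller issue: your line ``for fixed $k$ we have $u_p^n=0$ a.e.\ on $\Dr_1\setminus\Dr_k$ for all $n\ge k$'' presupposes that the $\Dr_n$ are nested, which is not assumed. This is easy to repair (e.g.\ $\int_{\Dr_1\setminus\Dr}|u_p^n|\,dx\le\|u_p^n\|_{L^2}|\Dr_n\setminus\Dr|^{1/2}\to0$), but as written it is not correct.
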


\begin{proof}
By the Sobolev inequality it is immediate to check that the sequence $u_n^p$ is uniformly bounded in $H^1(\R^d)$ and $W^{1,2q}(\R^d)$ and so we may suppose that up to a subsequence it converges in $L^2$ to a function $u$. Moreover, all the functions $u_p^n$ are uniformly bounded in $L^\infty$ and so, the function $u$ is zero almost everywhere on $\R^d\setminus\Dr$. Now, by \cite{deve} we have that $u\in H^1_0(\Dr)$. Moreover, for any function $v\in H^1_0(\Dr)\subset H^1_0(\Dr_n)$ we have
$$F_p(u)\le \liminf_{n\to\infty} F_p(u_p^n)\le F_p(v),$$
which proves that $u=u_p$ is the minimizer of $F_p$ on $\Dr$. Moreover, since $F_p(u)\ge F_p(u_p^n)$ for every $n$, we get that $F_p(u_p^n)\to F_p(u)$. By the strong $L^2$ convergence of $u_p^n$ to $u_p$ we obtain also that $\|\nabla u_p^n\|_{L^2}\to\|\nabla u_p\|_{L^2}$ and $\|\nabla u_p^n\|_{L^{2q}}\to\|\nabla u_p\|_{L^{2q}}$, which concludes the proof.
\end{proof}

\begin{prop}\label{mainest04}
Let $\Dr\subset\R^d$ be a bounded open set of finite perimeter satisfying the external ball condition and let $f\in L^\infty(\R^d)$. Then, for every $r\ge d$, there are constants $\delta>0$, depending on $\Dr$ and $f$, and $C$, depending on $r$, on the dimension $d$, the perimeter $P(\Dr)$, the diameter $\text{diam}(\Dr)$, the radius $R$ of the external ball, the norms $\|f\|_{L^\infty(\Dr)}$ and $\|u_1\|_{L^\infty}$, such that 
$$\|\theta_p\|_{L^r(\Dr)}\le C\quad\text{for every}\quad p\in(1,1+\delta).$$
\end{prop}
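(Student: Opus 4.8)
The plan is to apply the De~Pascale--Evans--Pratelli estimate (Lemma~\ref{apriorilemma}) to the state functions $u_p$ with the choice $G(t)=t+mC_p t^{q-1}$, so that $G'(|\nabla u_p|^2)=1+mC_p|\nabla u_p|^{2(q-1)}$ and the quantity $G'(|\nabla u_p|^2)|\nabla u_p|$ dominates $|\nabla u_p|$ (hence dominates $\theta_p/m\cdot\|\nabla u_p\|_{L^{2q}}^{2(q-1)}$ up to the normalisation in \eqref{thetapthunder}), and then to convert a bound on $\int_\Dr |G'(|\nabla u_p|^2)|^r|\nabla u_p|^2\,dx$ into the desired bound on $\|\theta_p\|_{L^r}$. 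Since $G'(0)=1>0$, the lemma applies. The term $\int_\Dr |G'(|\nabla u_p|^2)|^r\,dx$ appearing on the right-hand side with coefficient $3\eps$ must be absorbed into the left-hand side: this is possible because, on the set where $|\nabla u_p|$ is large, $|\nabla u_p|^2$ is bounded below, so $|G'|^r|\nabla u_p|^2\ge c|G'|^r$ there, while on the set where $|\nabla u_p|$ is small one has $G'\le 1+mC_p(\text{small})$, so $|G'|^r$ is under control pointwise; choosing $\eps$ small (depending only on $r$) then absorbs the term. The quantities $\|u_p\|_{L^\infty}$ and $\|f\|_{L^r}$ on the right are handled by Lemma~\ref{inftyestlem04} (using $r\ge d> d/2$) and by $\|f\|_{L^r(\Dr)}\le \|f\|_{L^\infty}|\Dr|^{1/r}$, and by Proposition~\ref{convergenceprop}(ii) the $L^\infty$ norms $\|u_p\|_{L^\infty}$ converge (they are uniformly bounded) so can be replaced, for $p$ close to $1$, by a constant depending on $\|u_1\|_{L^\infty}$.

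The genuinely new point is the boundary term $-\tfrac{(r-1)^2\|u_p\|_{L^\infty}^2}{\eps}\int_{\partial\Dr} H\,\sigma^r|\nabla u_p|^2\,d\HH^{d-1}$, which has the \emph{wrong sign} when $\Dr$ is not convex (i.e. where $H<0$). Here is where the barrier comes in: by Lemma~\ref{barrierlemma}(ii), $G'(|\nabla u_p|^2)|\nabla u_p| = (1+mC_p\|\nabla u_p\|_{L^\infty(\partial\Dr)}^{2(q-1)})\|\nabla u_p\|_{L^\infty(\partial\Dr)}$ is bounded on $\partial\Dr$ by $\|f\|_{L^\infty}\diam(\Dr)(1+\diam(\Dr)/\rho)^{d-1}=:K$, a constant independent of $p$. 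Therefore $\sigma^r|\nabla u_p|^2 = (G'(|\nabla u_p|^2))^r|\nabla u_p|^2 \le K^{r-2}\big(G'(|\nabla u_p|^2)|\nabla u_p|\big)^2\le K^r$ on $\partial\Dr$ (using $r\ge 2$), and since $|H|$ is bounded on $\partial\Dr$ by a constant depending on the external ball radius and $\P(\Dr)$ controls $\HH^{d-1}(\partial\Dr)$, the whole boundary term is bounded in absolute value by a constant of the claimed type, uniformly in $p\in(1,1+\delta)$. Thus the a priori estimate yields $\int_\Dr |G'(|\nabla u_p|^2)|^r|\nabla u_p|^2\,dx\le C$.

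From here the conclusion is routine: split $\Dr=\{|\nabla u_p|\le 1\}\cup\{|\nabla u_p|>1\}$; on the first set $\theta_p\le mC_p\le m\|\nabla u_p\|_{L^{2q}}^{-2(q-1)}$, which by Proposition~\ref{convergenceprop}(iii) stays bounded (here is precisely where the non-degeneracy $\|\nabla u_p\|_{L^{2q}}\to\|\nabla u_1\|_{L^\infty}>0$ is used — note $\|\nabla u_1\|_{L^\infty}>0$ since $f\not\equiv 0$, otherwise the statement is trivial), and $|\Dr|<\infty$, so $\|\theta_p\|_{L^r(\{|\nabla u_p|\le1\})}\le C$; on the second set $|\nabla u_p|^2>1$ gives $\theta_p^r\le m^r C_p^r|\nabla u_p|^{2r(q-1)}\le m^r C_p^r |\nabla u_p|^{2r(q-1)}|\nabla u_p|^2$ and, since $C_p|\nabla u_p|^{2(q-1)}\le G'(|\nabla u_p|^2)/1$ (indeed $mC_p|\nabla u_p|^{2(q-1)}\le G'(|\nabla u_p|^2)$ so $C_p|\nabla u_p|^{2(q-1)}\le G'(|\nabla u_p|^2)/m$... more simply $\theta_p=mC_p|\nabla u_p|^{2(q-1)}\le G'(|\nabla u_p|^2)$), one gets $\theta_p^r\le (G'(|\nabla u_p|^2))^r\le (G'(|\nabla u_p|^2))^r|\nabla u_p|^2$, which is integrable by the estimate just obtained; summing the two pieces gives $\|\theta_p\|_{L^r(\Dr)}\le C$. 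There is a minor technical wrinkle — Lemma~\ref{apriorilemma} requires $\Dr$ and $f$ smooth, whereas here $\Dr$ only satisfies the external ball condition — and this is resolved by approximating $\Dr$ from outside by smooth domains $\Dr_n\supset\Dr$ with $|\Dr_n\setminus\Dr|\to0$ and $f$ extended (still $L^\infty$), applying the estimate on $\Dr_n$ with uniform constants (the external ball radius, diameter and perimeter pass to the limit, or can be taken uniform along the sequence), and using Lemma~\ref{approxlemma05} to pass $u_p^n\to u_p$ in $W^{1,2q}$; I expect this smoothing step, together with making the absorption of the $3\eps$-term rigorous, to be the main obstacle, since everything else is a bookkeeping of constants.
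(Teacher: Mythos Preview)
Your strategy matches the paper's: apply Lemma~\ref{apriorilemma} with $G'(t)=1+mC_p t^{q-1}$, bound the boundary term via Lemma~\ref{barrierlemma}, absorb the $3\eps\int\sigma^r$ term, and convert to a bound on $\theta_p$. The gap is in your splitting threshold.

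You split at $|\nabla u_p|=1$ and assert that on $\{|\nabla u_p|\le 1\}$ one has $\theta_p\le mC_p=m\|\nabla u_p\|_{L^{2q}}^{-2(q-1)}$, which ``stays bounded by Proposition~\ref{convergenceprop}(iii)''. But $q-1=1/(p-1)\to\infty$, so if $\|\nabla u_1\|_{L^\infty}<1$ this quantity blows up. The same issue breaks your absorption step (an $\eps$ ``depending only on $r$'' cannot work, since the lower bound for $|\nabla u_p|^2$ on the large-gradient set is $p$-dependent) and your pointwise boundary bound $\sigma^r|\nabla u_p|^2\le K^r$ (which would need $\sigma\le K$, not merely $\sigma|\nabla u_p|\le K$; when $|\nabla u_p|$ is small this fails).

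The correct threshold, and this is what the paper uses, is $|\nabla u_p|=\|\nabla u_p\|_{L^{2q}}$. On $S=\{|\nabla u_p|\le\|\nabla u_p\|_{L^{2q}}\}$ one gets $\theta_p=m(|\nabla u_p|/\|\nabla u_p\|_{L^{2q}})^{2(q-1)}\le m$ directly, hence $\sigma_p\le1+m$, with no appeal to $C_p$. On $B=\{|\nabla u_p|>\|\nabla u_p\|_{L^{2q}}\}$ one has $|\nabla u_p|^2\ge\|\nabla u_p\|_{L^{2q}}^2$, so taking $\eps=\|\nabla u_p\|_{L^{2q}}^2/6$ absorbs the $3\eps$-term into the left-hand side; the non-degeneracy $\|\nabla u_p\|_{L^{2q}}\to\|\nabla u_1\|_{L^\infty}>0$ is then used, correctly, to control the resulting negative powers of $\eps$ in the remaining error terms uniformly in $p$. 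The boundary integral needs the same $S/B$ split: on $S\cap\partial\Dr$ use $\sigma_p\le1+m$ and $(\sigma_p|\nabla u_p|)^2\le K^2$; on $B\cap\partial\Dr$ use $|\nabla u_p|^{2-r}\le\|\nabla u_p\|_{L^{2q}}^{2-r}$ and $(\sigma_p|\nabla u_p|)^r\le K^r$. Your approximation step for non-smooth $\Dr$ via Lemma~\ref{approxlemma05} is correct and is exactly what the paper does.
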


\begin{proof}
Suppose first that $\Dr$ has smooth boundary. We recall the notations $u_p$ for the minimizer of the functional $F_p$ in $H^1_0(\Dr)$, $\theta_p$ for the optimal reinforcement, and $C_p=\|\nabla u_p\|_{2q}^{-2(q-1)}$, where $q=p/(p-1)$. Setting
$$G_p(t)=t+\frac{mC_p}{q}t^q$$
we have that $u_p$ is the minimizer of the functional
$$H^1_0(\Dr)\ni u\mapsto \frac12\int_{\Dr}G_p(|\nabla u|^2)\,dx-\int_{\Dr} uf\,dx.$$
Moreover, we have
$$\theta_p=m\left( \frac{|\nabla u_p|}{\|\nabla u_p\|_{L^{2q}}}\right)^{2(q-1)}=mC_p |\nabla u_p|^{2(q-1)}=G_p'(|\nabla u_p|^2)-1.$$
By Lemma \ref{apriorilemma} and the mean curvature estimate $H_{\partial\Dr}\ge -1/R$, we have that
\be\label{startineq05}
\begin{split}
\int_\Dr\sigma_p^r|\nabla u_p|^2\,dx\le3\eps\int_\Dr\sigma_p^r\,dx
&+\Big(\frac{(r-1)^r}{\eps^{r-1}}+\frac1{\eps^{2r-1}}\Big)\|u_p\|_{L^\infty}^r\|f\|_{L^r}^r\\
&+\frac{(r-1)^2\|u_p\|_{L^\infty}^2}{\eps R}\int_{\partial\Dr}\sigma_p^r|\nabla u_p|^2\,d\HH^{d-1},
\end{split}
\ee
We denote by $S$ and $B$ the sets 
$$S=\{x\in\overline \Dr\ :\ |\nabla u_p(x)|\le \|\nabla u_p\|_{L^{2q}}\}\qquad\hbox{and}\qquad B=\{x\in\overline \Dr\ :\ |\nabla u_p(x)|> \|\nabla u_p\|_{L^{2q}}\},$$
and we notice that we have the inequality $\ds \sigma_p\le 1+m$ on $S$, where we set for simplicity $\sigma_p=G_p(|\nabla u_p|^2)$. We now estimate separately the three terms on the right-hand side. For the first one, after integrating separately on $S$ and $B$, we get
\be\label{ftrhs05}
\int_{\Dr}\sigma_p^r\,dx=\int_{S}\sigma_p^r\,dx+\int_B\sigma_p^r\,dx
\le(1+m)^r |\Dr|+\|\nabla u_p\|_{L^{2q}}^{-2}\int_B\sigma_p^r|\nabla u_p|^2\,dx.
\ee
We can estimate the second term simply by using Lemma \ref{inftyestlem04}, while for the third one we have 
\[\begin{split}
\int_{\partial\Dr}\sigma_p^r|\nabla u_p|^2\,d\HH^{d-1}
&=\int_{S\cap\partial\Dr}\sigma_p^r|\nabla u_p|^2\,d\HH^{d-1}+\int_{B\cap\partial\Dr}\sigma_p^r|\nabla u_p|^2\,d\HH^{d-1}\\
&\le(1+m)^{r-2}\int_{S\cap\partial\Dr}\sigma_p^2|\nabla u_p|^2\,d\HH^{d-1}\\
&\qquad+\|\nabla u_p\|_{L^{2q}}^{2-r}\int_{B\cap\partial\Dr}\sigma_p^r|\nabla u_p|^r\,d\HH^{d-1}\\
&\le(1+m)^{r-2}P(D)\|f\|_{L^\infty}^2\diam(\Dr)^2\left(1+\frac{\diam(\Dr)}{\rho}\right)^{2(d-1)}\\
&\qquad+\|\nabla u_p\|_{L^{2q}}^{2-r}P(D)\|f\|_{L^\infty}^r\diam(\Dr)^r\left(1+\frac{\diam(\Dr)}{\rho}\right)^{r(d-1)}.
\end{split}\]
Now taking in \eqref{startineq05} $\eps=\|\nabla u_p\|_{L^{2q}}^2/6$ and absorbing part of the first term in the right-hand side of \eqref{startineq05} into the left-hand side we get
\[\begin{split}
\frac12\int_{\Dr}\sigma_p^r|\nabla u_p|^2\,dx&\le\frac12\|\nabla u_p\|_{L^{2q}}^2(1+m)^r|\Dr|\\
&+6^{2r-1}\Big(\frac{(r-1)^r}{\|\nabla u_p\|_{L^{2q}}^{2r-2}}+\frac1{\|\nabla u_p\|_{L^{2q}}^{4r-2}}\Big)\|u_p\|_{L^\infty}^r\|f\|_{L^r}^r\\
&+\frac{6(r-1)^2\|u_p\|_{L^\infty}^2}{\|\nabla u_p\|_{L^{2q}}^2R}\Big[(1+m)^{r-2}P(D)\|f\|_{L^\infty}^2\diam(\Dr)^2\left(1+\frac{\diam(\Dr)}{\rho}\right)^{2(d-1)}\\
&\qquad+\|\nabla u_p\|_{L^{2q}}^{2-r}P(D)\|f\|_{L^\infty}^r\diam(\Dr)^r\left(1+\frac{\diam(\Dr)}{\rho}\right)^{r(d-1)}\Big].
\end{split}\]
We now notice that the same inequality holds for irregular $\Dr$. In fact we can approximate $\Dr$ by a sequence of smooth sets $\Dr_n$ containing $\Dr$ such that $|\Dr_n\setminus\Dr|\to 0$, $P(\Dr_n)\le 2 P(\Dr)$, $\text{diam}(\Dr_n)\le 2 \text{diam}(\Dr)$. By Lemma \ref{approxlemma05} the right-hand side of the above estimate passes to the limit, while the left-hand side is semicontinuous which finally gives the estimate passes to the limit. We now consider $\delta>0$ such that 
$$\frac12\|u_1\|_{L^\infty(\Dr)}\le \|u_1\|_{L^\infty(\Dr)}\le \frac32\|u_1\|_{L^\infty(\Dr)},\quad\hbox{for every}\quad p\in(1,1+\delta).$$
Then there is a constant $C=C(d,r,m,\|f\|_{L^\infty}, P(\Dr),\text{diam}(\Dr), \|u_1\|_{L^\infty})$ such that 
$$\int_{\Dr} \sigma_p^{r}|\nabla u_p|^2\,dx\le C,\quad\hbox{for every}\quad p\in(1,1+\delta).$$
Now the conclusion follows by the inequality $\int_\Dr\theta_p^r\,dx\le\int_\Dr\sigma_p^r\,dx$ and by applying one more time the estimate \eqref{ftrhs05}.
\end{proof}

\medskip

{\it Uniform $L^\infty$ estimate for $\theta_p$ in the case of constant force.} In the case $f=const$ on $\Dr$ a uniform estimate on $\theta_p$ can be obtained in a more direct way by using the Payne-Philippin maximum principle \cite{paph} that we recall below. The idea is similar to the one used by Kawohl in \cite{kawohl} for the infinity Laplacian.

\begin{teo}[Payne-Philippin maximum principle \cite{paph}]
Let $\Dr\subset\R^d$ be a bounded open set with $C^{2+\eps}$ boundary and let $g:[0,+\infty)\to[0,+\infty)$ be a $C^2$ function such that $2sg'(s)+g(s)>0$ on $[0,+\infty)$. Then, the function
$$\widetilde G(x)=\int_0^{|\nabla u (x)|^2}\big(2sg'(s)+g(s)\big)\,ds,$$
achieves its maximum on the boundary $\partial\Dr$, where the function $u$ is the solution to the equation 
$$-\dive\big(g(|\nabla u|^2)\nabla u\big)=1\quad\hbox{in}\quad \Dr,\qquad u=0\quad\hbox{on}\quad\partial\Dr.$$
\end{teo}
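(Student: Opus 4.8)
The plan is to exhibit $\widetilde G$ as a subsolution of a linear second-order elliptic operator with no zeroth-order term on the open set where $\nabla u\ne0$, and then to invoke the strong maximum principle; this is the classical $P$-function argument of Payne and Philippin, and for $g\equiv1$ it reduces to the familiar fact that $\Delta|\nabla u|^2=2\|D^2u\|^2\ge0$.

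First I would record the elementary consequences of the structure hypothesis. Writing $\Phi(s)=g(s)+2sg'(s)$, the assumption is $\Phi>0$ on $[0,+\infty)$; in particular $g(0)=\Phi(0)>0$, and since $\big(s^{1/2}g(s)\big)'=\tfrac12 s^{-1/2}\Phi(s)>0$ for $s>0$, one gets $g>0$ on all of $[0,+\infty)$. Hence the quasilinear operator $u\mapsto\dive\big(g(|\nabla u|^2)\nabla u\big)$ is elliptic and non-degenerate: its linearisation at a gradient value $p$ is $g(|p|^2)\delta_{ij}+2g'(|p|^2)p_ip_j$, with eigenvalues $g(|p|^2)>0$ and $\Phi(|p|^2)>0$. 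Therefore, by standard quasilinear elliptic regularity together with the $C^{2+\eps}$ assumption on $\partial\Dr$, $u$ is a classical solution, of class $C^2(\overline\Dr)$ and smooth on the open set $\Dr_+:=\{x\in\Dr:\nabla u(x)\ne0\}$ — and if the pointwise identities below require more derivatives than $g\in C^2$ literally gives, one first runs the argument for $g$ smooth with $\Phi>0$ and then approximates. Finally, since $\Phi>0$, the function $\widetilde G=\int_0^{|\nabla u|^2}\Phi\,ds$ is nonnegative on $\overline\Dr$, vanishes exactly where $\nabla u=0$, and is not identically zero because $u$ is non-constant.

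The core step is the differential inequality for $\widetilde G$ on $\Dr_+$. Setting $q=|\nabla u|^2$ one has $\widetilde G_j=\Phi(q)q_j$ and $\widetilde G_{ij}=\Phi'(q)q_iq_j+\Phi(q)q_{ij}$. I would apply the linearised operator $Lv:=\big(g(q)\delta_{ij}+2g'(q)u_iu_j\big)v_{ij}$ to $\widetilde G$ and prove that there is a vector field $b$, locally bounded on $\Dr_+$, with
$$L\widetilde G+b\cdot\nabla\widetilde G\ \ge\ 0\qquad\text{in }\Dr_+.$$
Concretely this is the Payne--Philippin computation: differentiate the equation $g(q)\Delta u+2g'(q)u_iu_ju_{ij}=-1$ once to obtain an identity relating $\nabla(\Delta u)$, $D^2u$ and $\nabla q$; substitute it into the Bochner-type expansion of $q_{ij}$ contracted against $g(q)\delta_{ij}+2g'(q)u_iu_j$; then retain the manifestly nonnegative quadratic terms in $D^2u$ — using Cauchy--Schwarz to bound $\|D^2u\|^2$ from below by suitable squares of traces, in particular by $(u_iu_ju_{ij})^2/q^2$ — and move all the remaining terms into $b\cdot\nabla\widetilde G$. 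The two conditions $\Phi>0$ and $g>0$ are exactly what make $L$ elliptic and keep the coefficients of that quadratic form nonnegative, so that nothing with a bad sign is left over.

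The conclusion is then the strong maximum principle. Suppose $\widetilde G$ attained its maximum $M$ at an interior point $x_0$. If $\nabla u(x_0)=0$, then $M=\widetilde G(x_0)=0$, hence $\widetilde G\equiv0$ and $u$ is constant, contradicting $-\dive\big(g(|\nabla u|^2)\nabla u\big)=1$. If $\nabla u(x_0)\ne0$, let $\omega$ be the connected component of $\Dr_+$ containing $x_0$; there $L$ is locally uniformly elliptic with locally bounded lower-order coefficients, so Hopf's strong maximum principle (in its local form) gives $\widetilde G\equiv M$ on $\omega$, hence on $\overline\omega$ by continuity. Since $\omega$ is a nonempty bounded open set and a connected component of $\Dr_+$ is relatively clopen in $\Dr_+$, one has $\emptyset\ne\partial\omega\subset\partial\Dr\cup\{x\in\Dr:\nabla u(x)=0\}$; thus $M$ is attained either at a point of $\partial\Dr$ — which is the claim — or at an interior critical point, a case already excluded. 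Hence $\widetilde G$ attains its maximum on $\partial\Dr$. I expect the main obstacle to be the middle step: carrying out the Payne--Philippin computation and checking that, thanks to $\Phi>0$, every error term can genuinely be written as $b\cdot\nabla\widetilde G$ with $b$ locally bounded away from the critical set; the degeneracy of $L$ on $\{\nabla u=0\}$ is the only other subtlety, and it is absorbed above via $\widetilde G\ge0$.
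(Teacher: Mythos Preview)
The paper does not prove this theorem; it is quoted from Payne--Philippin \cite{paph} and used as a black box in the subsequent proposition, so there is no ``paper's own proof'' to compare against. Your outline is the standard $P$-function strategy of Payne and Philippin themselves: set up the linearised elliptic operator, establish the differential inequality $L\widetilde G + b\cdot\nabla\widetilde G\ge 0$ on the set where $\nabla u\ne 0$, and finish with the strong maximum principle together with the observation that $\widetilde G$ vanishes at critical points. The framing and the endgame (handling the degeneracy on $\{\nabla u=0\}$ via $\widetilde G\ge 0$ and the component argument) are correct.

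That said, as you yourself flag, the proposal is an outline rather than a proof: the entire substance of the Payne--Philippin result lies in the computation you describe only in words. Differentiating the PDE, expanding $L\widetilde G$, and organising the terms so that the Hessian contributions are nonnegative and the remainder is a genuine first-order term in $\widetilde G$ is a careful several-page calculation; it does not follow from a one-line Cauchy--Schwarz, and the precise form of $b$ (with its $|\nabla u|^{-1}$ singularities that stay locally bounded on $\Dr_+$) needs to be exhibited. If you intend to include a self-contained proof rather than cite \cite{paph}, you must actually carry out that step; otherwise what you have written is a correct roadmap but not yet a proof.
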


We are now in a position to give a uniform estimate on $\|\theta_p\|_{L^\infty(\Dr)}$.

\begin{prop}\label{propestextball}
Let $\Dr\subset\R^d$ be a bounded open set satisfying an external ball condition with radius $R\in (0,+\infty]$, where by $R=+\infty$ is intended that $\Dr$ is convex. Then, there is $\eps>0$ such that for $p\in(1,1+\eps]$ we have the estimate 
$$\theta_p\le2\|\nabla u_1\|_{L^\infty(\Dr)}^{-2}M^2\diam(\Dr)^2\left(1+\frac{\diam(\Dr)}{R}\right)^{2(d-1)}.$$
For convex $\Dr$ we have 
\be\label{estconvex}
\theta_p\le2\|\nabla u_1\|_{L^{\infty}}^{-2}M^2\diam(\Dr)^2.
\ee
\end{prop}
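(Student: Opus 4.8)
The plan is to combine the boundary gradient estimate of Lemma~\ref{barrierlemma} with the Payne--Philippin maximum principle: for a constant right-hand side the latter propagates the bound on $|\nabla u_p|$ from $\partial\Dr$ to all of $\Dr$, and a lower bound on $\|\nabla u_p\|_{L^\infty}$ coming from Proposition~\ref{convergenceprop}(iii) then makes the resulting estimate uniform as $p\to1$. As in Proposition~\ref{mainest04} I would first treat a smooth domain and recover the general case by an outer approximation.

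Assume then $\partial\Dr\in C^{2+\eps}$ (and, for the convex statement, $\Dr$ convex), and write $f\equiv M>0$ after a harmless sign change. Since $u_p$ minimises $u\mapsto\frac12\int_\Dr G_p(|\nabla u|^2)\,dx-\int_\Dr uf\,dx$ with $G_p(t)=t+\frac{mC_p}{q}t^q$, it is the $C^1(\overline\Dr)$ solution (Proposition~\ref{exisp}) of $-\dive\big(g(|\nabla u_p|^2)\nabla u_p\big)=M$ with $g(s)=G_p'(s)=1+mC_ps^{q-1}$, and $\theta_p=mC_p|\nabla u_p|^{2(q-1)}$. For $p$ close to $1$ one has $q\ge3$, so $g\in C^2([0,+\infty))$, and $2sg'(s)+g(s)=1+mC_p(2q-1)s^{q-1}>0$; dividing the equation by $M$ (which replaces $g$ by $g/M$, still admissible, and only rescales $\widetilde G$ by $1/M$) puts it in the form required by the Payne--Philippin theorem. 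Its auxiliary function $\widetilde G(x)=\int_0^{|\nabla u_p(x)|^2}\big(2sg'(s)+g(s)\big)\,ds$ is a strictly increasing function of $|\nabla u_p(x)|$, so its maximum over $\overline\Dr$ occurs where $|\nabla u_p|$ is largest, and the theorem forces that point onto $\partial\Dr$. As $\theta_p$ is likewise an increasing function of $|\nabla u_p|$, I would deduce $\|\nabla u_p\|_{L^\infty(\Dr)}=\|\nabla u_p\|_{L^\infty(\partial\Dr)}=:t_p$ and $\|\theta_p\|_{L^\infty(\Dr)}=mC_p\,t_p^{2(q-1)}$. Then Lemma~\ref{barrierlemma} reads $\big(1+\|\theta_p\|_{L^\infty(\Dr)}\big)t_p\le A$ with $A:=M\diam(\Dr)\big(1+\diam(\Dr)/R\big)^{d-1}$ (so $A=M\diam(\Dr)$ when $\Dr$ is convex), whence
$$t_p\le A\qquad\text{and}\qquad\|\theta_p\|_{L^\infty(\Dr)}\le \frac{A}{t_p}\le\frac{A\,|\Dr|^{1/(2q)}}{\|\nabla u_p\|_{L^{2q}(\Dr)}},$$
the last step by Hölder's inequality $\|\nabla u_p\|_{L^{2q}}\le|\Dr|^{1/(2q)}t_p$.

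For a general $\Dr$ satisfying the external ball condition with radius $R$ (respectively a general bounded convex $\Dr$) I would approximate it from outside by smooth domains $\Dr_n$ (smooth convex, in the convex case) with $\Dr\subset\Dr_n$, $|\Dr_n\setminus\Dr|\to0$, $\diam(\Dr_n)\to\diam(\Dr)$ and external ball radius $\to R$. Applying the inequalities above on each $\Dr_n$ and letting $n\to\infty$ for fixed $p$ via Lemma~\ref{approxlemma05} --- along which $\nabla u_p^n\to\nabla u_p$ in $L^{2q}$, hence $\theta_p^n\to\theta_p$ a.e.\ and $C_p^n\to C_p$, while the constants converge --- transfers to $\Dr$ the bounds $\|\nabla u_p\|_{L^{2q}(\Dr)}\le|\Dr|^{1/(2q)}A$ and $\|\theta_p\|_{L^\infty(\Dr)}\le A\,|\Dr|^{1/(2q)}\|\nabla u_p\|_{L^{2q}(\Dr)}^{-1}$. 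Now let $p\to1$: by Proposition~\ref{convergenceprop}(iii), $\|\nabla u_p\|_{L^{2q}(\Dr)}\to\|\nabla u_1\|_{L^\infty(\Dr)}$ and $|\Dr|^{1/(2q)}\to1$, so the first bound gives $\|\nabla u_1\|_{L^\infty(\Dr)}\le A$ and the second gives $\limsup_{p\to1}\|\theta_p\|_{L^\infty(\Dr)}\le A/\|\nabla u_1\|_{L^\infty(\Dr)}\le A^2/\|\nabla u_1\|_{L^\infty(\Dr)}^2$. Therefore there is $\eps>0$ such that $\theta_p\le 2A^2\|\nabla u_1\|_{L^\infty(\Dr)}^{-2}$ for every $p\in(1,1+\eps]$, which is the asserted estimate (and \eqref{estconvex} is its convex instance, where $A=M\diam(\Dr)$).

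I expect the approximation step to be the only genuinely delicate point: the Payne--Philippin theorem as quoted needs a $C^{2+\eps}$ boundary, so one must exhibit smooth outer approximations of $\Dr$ keeping both the diameter and the external-ball radius under control (a point already glossed over in Proposition~\ref{mainest04}). Everything else is bookkeeping with Hölder's inequality together with Lemma~\ref{barrierlemma} and Proposition~\ref{convergenceprop}.
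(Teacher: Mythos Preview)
Your argument is correct and follows essentially the same route as the paper: Payne--Philippin to push the gradient maximum to $\partial\Dr$, the barrier Lemma~\ref{barrierlemma} to bound it there, Proposition~\ref{convergenceprop}(iii) to make the constant uniform as $p\to1$, and outer approximation for non-smooth $\Dr$. The only difference is in the algebra after Payne--Philippin: the paper reads off directly from the auxiliary function that $mC_p|\nabla u_p|^{2q}\le A^2$ and then uses the identity $\theta_p^{\,p}=m^{p-1}\|\nabla u_p\|_{L^{2q}}^{-2}\cdot mC_p|\nabla u_p|^{2q}$, whereas you first deduce $\|\nabla u_p\|_{L^\infty(\Dr)}=\|\nabla u_p\|_{L^\infty(\partial\Dr)}$, get $\|\theta_p\|_\infty\le A/t_p$, and then control $1/t_p$ by H\"older. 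Your path actually yields the slightly sharper intermediate bound $\limsup_{p\to1}\|\theta_p\|_\infty\le A/\|\nabla u_1\|_\infty$, which you then weaken to $A^2/\|\nabla u_1\|_\infty^2$ to match the stated form; the paper lands on the latter directly.
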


\begin{proof}
We consider only the case of $\Dr$ convex since the two cases are analogous. We first prove \eqref{estconvex} for regular domains and we then argue by approximation. Indeed, suppose first that $\Dr$ has regular $C^{2,\eps}$ boundary. Then $u_p$ is a solution of 
$$-\dive\big(g_p(|\nabla u_p|^2)\nabla u_p\big)=M\quad\hbox{in }\Dr,\qquad u_p=0\quad\hbox{on }\partial\Dr,$$
where $g_p$ is given by 
$$g_p(s)=s+mC_p s^{q-1},$$
and is sufficiently smooth when $q>1$ is large (i.e. $p$ close to $1$). By the Payne-Philippin theorem we have that the function 
$$\widetilde G_p(x)=\frac32|\nabla u_p|^2+\frac{2q-1}{q}mC_p|\nabla u_p|^{2q},$$
assumes its maximum on the boundary of $\Dr$. On the other hand, by \eqref{estconvex} on $\partial\Dr$ we have 
$$\widetilde G_p\le \frac{2q-1}{q}|\nabla u|^2\Big(1+mC_p|\nabla u|^{2(q-1)}\Big)\le \frac{2q-1}{q}M^2\diam(\Dr)^2,$$
and so on the entire $\Dr$ we have the estimate
$$mC_p|\nabla u|^{2q}\le \frac{q}{2q-1}G_p\le M^2\diam(\Dr)^2,$$
The optimal potential $\theta_p$ satisfies
$$\theta_p^p=\theta_p^{q/(q-1)}=\Big(mC_p|\nabla u_p|^{2(q-1)}\Big)^{q(q-1)}=\Big(m^{p/q}C_p^{1/(q-1)}\Big) m C_p|\nabla u_p|^{2q}.$$
We now recall that by the definition of $C_p$ we get
$$C_p^{1/(q-1)}=\Big(\|\nabla u_p\|_{L^{2q}}^{-2q/p}\Big)^{1/(q-1)}=\|\nabla u_p\|_{L^{2q}}^{-2}.$$
Thus we obtain
$$\theta_p\le \Big(m^{p/q}\|\nabla u_p\|_{L^{2q}}^{-2}M^2\diam(\Dr)^2\Big)^{1/p}.$$
The case of irregular $\Dr$ follows by approximation with smooth convex sets containing $\Dr$ and then applying Lemma \ref{approxlemma05}.
\end{proof}

\subsection{Conclusion of the proof of Theorem \ref{mainth04} and Theorem \ref{mainth04conv}}

\begin{proof}[Proof of Theorem \ref{mainth04}]
Let for $p>1$, $u_p\in W^{2q}_0(\Dr)$ be the minimizer of $F_p$ and let $\theta_p$ be the optimal potential relative to $u_p$. By the fact that $\theta_p\in\A_{m,p}$ and by Proposition \ref{mainest04} we have that there is a constant $\bar C$ such that for $p>1$ small enough we have
$$\int_\Dr \theta_p^p\,dx=m^p\qquad\text{and}\qquad \int_\Dr \theta_p^r\,dx\le C,$$
for some $r\ge d$. In particular, $\theta_p$ is uniformly bounded in $L^2(\Dr)$ and so, up to a subsequence, $\theta_p$ converges weakly in $L^2(\Dr)$ to a nonnegative function $\bar\theta\in L^2(\Dr)$. Moreover, we have
$$\qquad\int_\Dr\bar\theta\,dx=\lim_{p\to1}\int_\Dr\theta_p\,dx\le \liminf_{p\to1}\|\theta_p\|_{L^p}|\Dr|^{1/q}=m.$$
On the other hand, by Proposition \ref{convergenceprop} we have that as $p\to1$ the solutions $u_p$ converge strongly in $H^1_0(\Dr)$ to the minimum $u_1$ of the functional $F_1$. Thus, for every fixed smooth function with compact support $\varphi\in C^\infty_c(\Dr)$ we have 
$$\int_{\Dr}f\varphi\,dx=\int_{\Dr}(1+\theta_p)\nabla u_p\cdot\nabla \varphi\,dx \xrightarrow[p\to1]{}\int_{\Dr}(1+\bar\theta)\nabla u_1\cdot\nabla \varphi\,dx.$$ 
Thus, we have that $u_1$ is in fact the solution of the equation
$$-\dive\big((1+\bar\theta)\nabla u_1\big)=f\quad\text{in }\Dr,\qquad u_1=0\quad\hbox{on }\partial\Dr,$$
and after integration by parts 
$$E_f(\bar\theta)=\frac12\int_{\Dr}(1+\bar\theta)|\nabla u_1|^2\,dx-\int_\Dr f u_1\,dx=-\frac12\int_{\Dr}f u_1\,dx.$$
By the convergence of $u_p$ to $u_1$ in $L^2(\Dr)$ and by Proposition \ref{convergenceprop} we have
$$E_f(\bar\theta)=-\frac12\int_{\Dr}f u_1\,dx=\lim_{p\to1}-\frac12\int_{\Dr}f u_p\,dx=\lim_{p\to1}E_f(\theta_p)=\lim_{p\to1}F_p(u_p)=F_1(u_1),$$
which proves that 
\be\label{emtheta04}
E_f(\bar\theta)=\min_{u\in H^1_0(\Dr)} F_1(u).
\ee
On the other hand we have the general min-max inequality
$$\sup_{\theta\in\A_m} E_f(\bar\theta)=\sup_{\theta\in\A_m}\ \min_{u\in H^1_0(\Dr)}J_f(\theta,u)\le\min_{u\in H^1_0(\Dr)}\ \sup_{\theta\in\A_m} J_{f}(\theta,u)=\min_{u\in H^1_0(\Dr)} F_1(u),$$
which concludes the proof that $\bar\theta$ is a solution to the problem \eqref{initialpb}.

Claim {\it (i)} follows by the fact that $\bar\theta\in L^r(\Dr)$ and that $r$ can be chosen arbitrary large.

Claim {\it (ii)} is in fact \eqref{emtheta04} and the claim {\it (iv)} follows directly by this equality.

The regularity of the minimizer $u_1$ of $F_1$ {\it (iii)} was proved by Evans in \cite{evansC11}.
\end{proof}

\begin{proof}[Proof of Theorem \ref{mainth04conv}]
We first consider the case $r<+\infty$. We notice that in the case of a convex set there is no boundary term in the De Pascale-Evans-Pratelli \cite{deevpr} estimate. Thus, for $f\in L^r(\Dr)$ there is a constant $C_r$ such that for every $p$ large enough
$$\|\theta_p\|_{L^r}\le C_r\big(1+\|f\|_{L^r}\big).$$
The rest of the proof follows as in the proof of Theorem \ref{mainth04}.
\end{proof}

\begin{oss}
By using methods of optimal transportation theory slightly finer results can be obtained (\cite{filippo}, private communication):
\begin{description}
\item[-] if $\Dr$ has the exterior ball condition and $f\in L^p(\Dr)$ with $p\in[1,+\infty]$, then $\bar\theta\in L^p(\Dr)$;
\item[-] if $\Dr$ is any bounded open set and $f\in L^p_{loc}(\Dr)$ with $p\in[1,+\infty]$, then $\bar\theta\in L^p_{loc}(\Dr)$.
\end{description}
\end{oss}

\section{Alternative approach to the optimal reinforcement problem}\label{alt04}

Let $\Dr\subset\R^d$ be a bounded open set with a smooth boundary. 
The right hand-side $f$ represents a given force on the membrane $D$. We allow $f$ to be a general signed measure $f\in\M(\Dr)$. A reinforcement of the membrane is given by a measure $\mu$ in the class $\M^+(\Dr)$ of nonnegative measures on $\Dr$. For a vertical displacement $u\in C^1_c(\Dr)$, the energy of the membrane subjected to the force $f$ and reinforced by $\mu$ is given by the functional
$$J_f(u,\mu):=\frac12\int_{\Dr}|\nabla u|^2\,dx+\frac12\int_\Dr |\nabla u|^2\,d\mu-\int_{\Dr}u\,df\;,$$
and we define the Dirichlet energy functional
$$E_f(\mu):=\inf\Big\{J_f(u,\mu)\ :\ u\in C^1_c(\Dr)\Big\}.$$
As we noticed in Remark \ref{fmeasure} for some measures $f$ and $\mu$ we may have $E_f(\mu)=-\infty$. This occurs for instance if the force $f\notin H^{-1}(\Dr)$ and $\mu$ is the Lebesgue measure on $\Dr$. However these cases are ruled out from our discussion, because we are interested in the maximal reinforcement of the membrane. In fact, for any measure $f$ there exists a measure $\mu$, corresponding to the transport density of a suitable Monge optimal transport problem, such that $E_f(\mu)>-\infty$ (see for instance \cite{boubut}). The optimization problem we are interested is the following:
\be\label{initialpb}
\max\Big\{E_f(\mu):\ \mu\in\A_m\Big\},
\ee
where the parameter $m>0$ represents the available quantity of reinforcement material and the admissible set $\A_m$ is given by
$$\A_m=\Big\{\mu\in\M^+(\Dr)\ :\ \mu(\Dr)\le m\Big\}.$$
A first existence result in the class of nonnegative measures $\mu$ is the following.

\begin{prop}\label{exitmu04}
Let $\Dr\subset\R^d$ be a bounded smooth open set and let $m>0$. Then for every signed measure $f\in\M(\Dr)$ the optimization problem \eqref{initialpb} admits a solution $\bar\mu\in\M^+(\Dr)$ with $\bar\mu(\Dr)=m$ and $\bar\mu(\partial\Dr)=0$. 
Moreover, we have that 
$$E_f(\bar\mu)=\inf_{u\in W^{1,\infty}_0(\bar\Dr)} \frac12\int_{\Dr}|\nabla u|^2\,dx+\frac{m}{2}\|\nabla u\|^2_{L^\infty}-\int_{\Dr}u\,df.$$
\end{prop}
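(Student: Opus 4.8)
The plan is to split the argument into the (elementary) existence of an optimal measure and the (more substantial) min--max identity, the latter being obtained by approximating the measure $f$ by smooth functions and invoking Theorem~\ref{mainth04}. Write $\Phi_f(u)=\frac12\int_\Dr|\nabla u|^2\,dx+\frac m2\|\nabla u\|_{L^\infty(\Dr)}^2-\int_\Dr u\,df$ for $u\in W^{1,\infty}_0(\overline\Dr)$, so that the identity to be proved is $E_f(\bar\mu)=\inf_{W^{1,\infty}_0(\overline\Dr)}\Phi_f$. One half of it is elementary: for fixed $u\in C^1_c(\Dr)$ the map $\mu\mapsto J_f(u,\mu)$ is affine and, since $|\nabla u|^2\in C_c(\Dr)$, weak-$*$ continuous on $\M^+(\overline\Dr)$, and $\sup_{\mu\in\A_m}J_f(u,\mu)=\Phi_f(u)$ (the supremum is attained at $m\delta_{x^*}$, with $x^*$ a point of maximal $|\nabla u|$); combining this with the trivial inequality $\sup_\mu\inf_u J_f\le\inf_u\sup_\mu J_f$ and with the density of $C^1_c(\Dr)$ in $W^{1,\infty}_0(\overline\Dr)$ in the sense that all three terms of $\Phi_f$ pass to the limit --- this is where the smoothness of $\partial\Dr$ enters, through zero-extension, inward rescaling and mollification --- one gets $\sup_{\A_m}E_f\le\inf_{C^1_c(\Dr)}\Phi_f=\inf_{W^{1,\infty}_0(\overline\Dr)}\Phi_f$.

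For existence I would use weak-$*$ compactness. As an infimum of the weak-$*$ continuous functionals $\mu\mapsto J_f(u,\mu)$, the energy $\mu\mapsto E_f(\mu)$ is weak-$*$ upper semicontinuous on $\M^+(\overline\Dr)$; it is monotone, bounded above by $0$, and $\sup_{\A_m}E_f$ is finite by hypothesis. Take a maximizing sequence $\mu_n$ with $\mu_n(\Dr)=m$ (monotonicity) and extract a weak-$*$ limit $\nu\in\M^+(\overline\Dr)$, $\nu(\overline\Dr)\le m$. Every $u\in C^1_c(\Dr)$ vanishes near $\partial\Dr$, hence $J_f(u,\nu)=J_f(u,\nu\llcorner\Dr)$ for all such $u$ and therefore $E_f(\nu)=E_f(\nu\llcorner\Dr)$; thus $\bar\mu:=\nu\llcorner\Dr$ lies in $\A_m$, does not charge $\partial\Dr$, and $E_f(\bar\mu)=E_f(\nu)\ge\limsup_n E_f(\mu_n)=\sup_{\A_m}E_f$ by upper semicontinuity, so $\bar\mu$ is optimal. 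If $\bar\mu(\Dr)<m$, adding $(m-\bar\mu(\Dr))\delta_{x_0}$ with $x_0\in\Dr$ restores the total mass $m$ without decreasing the energy.

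It remains to prove $\inf_{W^{1,\infty}_0(\overline\Dr)}\Phi_f\le\sup_{\A_m}E_f$. Fix $f_n\in C^\infty_c(\Dr)$ with $f_n\rightharpoonup^{*}f$ in $\M(\overline\Dr)$ and $\sup_n|f_n|(\Dr)<\infty$ (truncate $f$ away from $\partial\Dr$, then mollify). A bounded smooth $\Dr$ satisfies the external ball condition, so Theorem~\ref{mainth04} gives, for each $n$, an optimal $\theta_n$ with $\int_\Dr\theta_n\,dx=m$ (adding mass if needed) whose state $u_n$ minimizes $\Phi_{f_n}$ and satisfies $E_{f_n}(\theta_n)=\Phi_{f_n}(u_n)$. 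Since $|\int_\Dr u\,f_n\,dx|\le C_\Dr(\sup_n|f_n|(\Dr))\|\nabla u\|_{L^\infty}$ is dominated by $\frac m2\|\nabla u\|_{L^\infty}^2$, the $\Phi_{f_n}$ are uniformly coercive on $W^{1,\infty}_0(\overline\Dr)$; hence $u_n$ is bounded there and, along a subsequence, converges uniformly to some $\bar u\in W^{1,\infty}_0(\overline\Dr)$, while $\theta_n\,dx\rightharpoonup^{*}\nu$ and $\bar\mu:=\nu\llcorner\Dr\in\A_m$. Using the uniform convergence of $u_n$, the weak-$*$ convergence of $f_n$ and the lower semicontinuity of the two gradient terms one checks that $\Phi_{f_n}(u_n)\to\Phi_f(\bar u)=\inf_{W^{1,\infty}_0(\overline\Dr)}\Phi_f$ (in particular $\bar u$ minimizes $\Phi_f$); and from $E_{f_n}(\theta_n)\le J_{f_n}(\phi,\theta_n\,dx)$ with $\phi\in C^1_c(\Dr)$ fixed, letting $n\to\infty$ yields $\limsup_n E_{f_n}(\theta_n)\le J_f(\phi,\bar\mu)$ for every such $\phi$, hence $\limsup_n E_{f_n}(\theta_n)\le E_f(\bar\mu)$. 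Chaining these,
$$\inf_{W^{1,\infty}_0(\overline\Dr)}\Phi_f=\lim_n E_{f_n}(\theta_n)\le E_f(\bar\mu)\le\sup_{\A_m}E_f\le\inf_{W^{1,\infty}_0(\overline\Dr)}\Phi_f,$$
so equality holds throughout and $\bar\mu$ (mass-normalized as above) is the desired optimal measure.

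The two places where I expect to spend effort: first, the weak-$*$ limit of the approximating measures may charge $\partial\Dr$, which is harmless only because $E_f$ is defined through test functions vanishing near the boundary, so $E_f$ sees $\nu$ only through $\nu\llcorner\Dr$ (used twice above); second, the double passage to the limit --- since $f$ is merely a measure it cannot be paired with $H^1_0(\Dr)$, so the convergences $\Phi_{f_n}(u_n)\to\Phi_f(\bar u)$ and $E_{f_n}(\theta_n)\to E_f(\bar\mu)$ must be routed through the uniform $C^0$ convergence of the states $u_n$, itself available only thanks to the uniform $W^{1,\infty}$ bound from the coercivity estimate. As a self-contained alternative one could avoid Theorem~\ref{mainth04} and rerun the $p\to1$ analysis of Section~\ref{proofsec04} directly for a measure $f$: this is legitimate because for $p$ close to $1$ the embedding $W^{1,2q}_0(\Dr)\hookrightarrow C^0(\overline\Dr)$ makes the term $\int u\,df$ meaningful.
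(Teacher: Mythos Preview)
Your argument is correct, but it takes a substantially heavier route than the paper's. The paper proves existence exactly as you do (weak-$*$ upper semicontinuity of $E_f$ plus compactness of $\A_m$, then restriction to $\Dr$), but for the min--max identity it simply observes that $J_f(\mu,u)$ is concave and weak-$*$ upper semicontinuous in $\mu$ and convex in $u$, and invokes a classical min--max theorem (Clarke \cite{cla77}) to interchange $\sup$ and $\inf$ directly; computing $\sup_{\mu\in\A_m}J_f(\mu,u)=\Phi_f(u)$ then finishes the proof in one line. Your approach instead approximates $f$ by smooth $f_n$, applies Theorem~\ref{mainth04} to obtain $E_{f_n}(\theta_n)=\Phi_{f_n}(u_n)$, and passes to the limit via uniform $W^{1,\infty}$ bounds and weak-$*$ compactness. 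This works, and the two delicate points you flag (boundary mass of the limit measure, and routing the double limit through uniform $C^0$ convergence) are handled correctly. The trade-off is that the paper's proof is short and keeps Section~\ref{alt04} genuinely independent of the $p\to1$ machinery of Section~\ref{proofsec04}, whereas your argument imports Theorem~\ref{mainth04} (hence all of Section~\ref{proofsec04}) into what is meant to be an alternative, self-contained approach; on the other hand, your argument is constructive and avoids the external min--max reference. Two minor remarks: the finiteness of $\sup_{\A_m}E_f$ is not ``by hypothesis'' but follows from the existence of a transport-density measure as recalled in the paragraph preceding \eqref{initialpb}; and your density claim $\inf_{C^1_c(\Dr)}\Phi_f=\inf_{W^{1,\infty}_0(\overline\Dr)}\Phi_f$ is fine, but the ``inward rescaling'' you mention only works for star-shaped domains---for general smooth $\Dr$ use instead the truncations $u_\delta=\mathrm{sgn}(u)(|u|-\delta)^+$, which have compact support in $\Dr$ and do not increase $\|\nabla u\|_{L^\infty}$, followed by mollification.
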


\begin{proof}
We first notice that for every fixed $u\in C^1_c(\Dr)$ the map $\mu\mapsto J_f(u,\mu)$ is continuous for the weak$^*$ convergence. Hence the map $\mu\mapsto E_f(\mu)$, being an infimum of continuous maps, is weak$^*$ upper semicontinuous. The conclusion follows by the weak$^*$ compactness of the $\A_m$ and the fact seen above that there is at least $\mu$ such that $E_f(\mu)$ is finite. The last claim follows by the monotonicity of the functional $E_f$ and the fact that for every measure $\mu\in\A_m$ it holds $E_f(\mu)=E_f(\mu\lfloor\Dr).$

To prove the last claim we notice that the functional $J_f:\A_m\times C^1_c(\Dr)\to\R$ satisfies the assumptions:
\begin{itemize}
\item $J_f$ is concave and upper semicontinuous in the first variable with respect to the weak-$\ast$ convergence;
\item $J_f$ is convex in the second variable. 
\end{itemize}
According to a classical result (see for instance \cite{cla77}) we may interchange the $\inf$ and the $\sup$ for $J_f$ obtaining
\be\label{interch}
\sup_{\mu\in\A_m}\ \inf_{u\in C^1_c(\Dr)}J_f(\mu,u)=\inf_{u\in C^1_c(\Dr)}\ \sup_{\mu\in\A_m}J_f(\mu,u).
\ee
The supremum with respect to $\mu$ is easy to compute and we have
$$\sup_{\mu\in\A_m}J_f(\mu,u)=\frac12\int_{\Dr}|\nabla u|^2\,dx+\frac{m}{2}\|\nabla u\|^2_{L^\infty}-\int_{\Dr}u\,df$$
as required.
\end{proof}

Therefore, we end up with a variational problem for the functional
\be\label{varpb}
F_1(u)=\frac12\int_{\Dr}|\nabla u|^2\,dx+\frac{m}{2}\|\nabla u\|^2_\infty-\int_{\Dr}u\,df.
\ee
The existence and the regularity of the minimizers of $F_1$ has been widely studied in the literature (see for instance \cite{bresta} and \cite{evansC11}). We summarize the known results below.

\begin{teo}\label{regoth04}
For every measure $f\in\M(\Dr)$ the optimization problem 
$$\min\Big\{F_1(u)\ :\ u\in H^1_0(\Dr)\Big\}$$ 
admits a unique solution $\bar u$. Moreover, 
\begin{enumerate}[(i)]
\item if $f\in L^p(\Dr)$ with $1<p<+\infty$, then $u \in W^{2,p}(\Dr)$; in particular if $p>d$, then the solution $\bar u$ is $C^{1,\alpha}(\bar\Dr)$ for some $\alpha>0$;
\item If $\Dr$ is convex and $f\in L^\infty(\Dr)$ then $\Delta \bar u\in L^\infty(\Dr)$;
\item if $f\in C^2(\Dr)$, then $\bar u\in C^{1,1}(\Dr)$.
\end{enumerate}
\end{teo}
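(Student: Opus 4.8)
The statement collects classical facts, so the plan is to reduce the problem to the elastic--plastic torsion variational inequality and then invoke the cited literature.

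\emph{Existence and uniqueness.} First I would observe that $F_1$ is finite only on $W^{1,\infty}_0(\Dr)$, where, since $u$ vanishes on $\partial\Dr$, one has $\|u\|_{L^\infty}\le\diam(\Dr)\|\nabla u\|_{L^\infty}$ and hence $\big|\int_\Dr u\,df\big|\le\diam(\Dr)\,|f|(\Dr)\,\|\nabla u\|_{L^\infty}$. Consequently
$$F_1(u)\ge\frac12\|\nabla u\|_{L^2}^2+\frac m2\|\nabla u\|_{L^\infty}^2-\diam(\Dr)|f|(\Dr)\|\nabla u\|_{L^\infty}\ge-\frac{\diam(\Dr)^2|f|(\Dr)^2}{2m},$$
so $F_1$ is bounded below and coercive: a minimizing sequence $(u_n)$ is bounded in $H^1_0(\Dr)$ with $\|\nabla u_n\|_{L^\infty}$ bounded, hence, up to a subsequence, $u_n\rightharpoonup\bar u$ weakly in $H^1_0(\Dr)$, weakly-$\ast$ in $W^{1,\infty}_0(\Dr)$ and uniformly on $\bar\Dr$. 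Weak lower semicontinuity of the Dirichlet term, weak-$\ast$ lower semicontinuity of $\|\nabla\cdot\|_{L^\infty}$, and continuity of $u\mapsto\int_\Dr u\,df$ along this convergence give that $\bar u$ is a minimizer; uniqueness follows from the strict convexity carried by $\frac12\int_\Dr|\nabla u|^2\,dx$.

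\emph{Reduction to a gradient-constrained problem.} Next I would set $t_0=\|\nabla\bar u\|_{L^\infty}$ and $K_{t_0}=\{u\in W^{1,\infty}_0(\Dr):\|\nabla u\|_{L^\infty}\le t_0\}$. For $u\in K_{t_0}$ one has $F_1(u)\le\frac12\int_\Dr|\nabla u|^2\,dx-\int_\Dr u\,df+\frac m2 t_0^2$, while $F_1(\bar u)=\frac12\int_\Dr|\nabla\bar u|^2\,dx-\int_\Dr\bar u\,df+\frac m2 t_0^2$; since $F_1(\bar u)\le F_1(u)$, the function $\bar u$ minimizes $u\mapsto\frac12\int_\Dr|\nabla u|^2\,dx-\int_\Dr u\,df$ over the convex set $K_{t_0}$, i.e. $\bar u$ solves the variational inequality of the elastic--plastic torsion problem with threshold $t_0$. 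This places us exactly in the setting of \cite{bresta} and \cite{evansC11}.

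\emph{Regularity and main obstacle.} For (i) I would quote the $W^{2,p}$ theory for this gradient-constrained inequality on the smooth domain $\Dr$ (Brezis--Stampacchia \cite{bresta}): $f\in L^p(\Dr)$, $1<p<\infty$, gives $\bar u\in W^{2,p}(\Dr)$, and for $p>d$ the embedding $W^{2,p}(\Dr)\hookrightarrow C^{1,\alpha}(\bar\Dr)$ with $\alpha=1-d/p$ gives the Hölder regularity. For (ii), when $\Dr$ is convex the distance $d_{\partial\Dr}$ is concave with $|\nabla d_{\partial\Dr}|=1$ a.e., so the gradient-constrained problem is equivalent to the double-obstacle problem $\min\{\frac12\int_\Dr|\nabla u|^2\,dx-\int_\Dr u\,df:\ |u|\le t_0 d_{\partial\Dr}\}$; since $\pm t_0 d_{\partial\Dr}$ are semiconcave/semiconvex and $f\in L^\infty(\Dr)$, the classical obstacle-problem estimate bounds $-\Delta\bar u$ from above and below, hence $\Delta\bar u\in L^\infty(\Dr)$. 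For (iii), $f\in C^2(\Dr)$ yields the optimal interior $C^{1,1}$ regularity of the torsion function by Evans' theorem \cite{evansC11} (see also Wiegner \cite{wiegner}). The genuinely delicate points are the reduction step and the precise matching of the hypotheses with the cited results --- in particular, for (ii), the equivalence between $\|\nabla u\|_{L^\infty}\le t_0$ and $|u|\le t_0 d_{\partial\Dr}$, which uses the convexity of $\Dr$ in an essential way.
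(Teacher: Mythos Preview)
Your proposal is correct and follows essentially the same line as the paper: existence via the direct method and strict convexity for uniqueness, then the key reduction to the gradient-constrained problem by freezing $t_0=\|\nabla\bar u\|_{L^\infty}$, after which (i)--(iii) are delegated to \cite{bresta} and \cite{evansC11}. The only difference is cosmetic: the paper simply cites \cite{bresta} for both (i) and (ii) without further comment, whereas you spell out the coercivity bound and, for (ii), pass through the equivalence with the obstacle problem $|u|\le t_0 d_{\partial\Dr}$ before invoking the $L^\infty$ estimate on $\Delta\bar u$.
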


\begin{proof}
The existence follows by the direct methods of the calculus of variations while the uniqueness is a consequence of the strict convexity of the functional $F_1$. 
The solution $\bar u$ is clearly Lipschitz and setting $M=\|\nabla \bar u\|_{L^\infty}$ we have that $\bar u$ is also the solution of the problem 
$$\min\Big\{\frac12\int_{\Dr}|\nabla u|^2\,dx-\int_\Dr u\,df\ :\ u\in W^{1,\infty}_0(\Dr),\ \|\nabla u\|_{L^\infty}\le M\Big\}.$$
The claims {\it (i)} and {\it (ii)} were proved in \cite{bresta}, while {\it (iii)} was proved in \cite{evansC11}. 
\end{proof}

Let now $f\in L^p(\Dr)$ with $p>d$ and let $\bar\mu$ be the solution to the problem \eqref{initialpb}. We notice that $J_f(\bar\mu,\cdot)$ can be extended from a functional on $C^1_c(\Dr)$ to a functional on $C^1(\Dr)\cap C_0(\bar\Dr)$. Thus, we can use the minimizer $\bar u$ as a test function in $J_f(\bar\mu,\cdot)$ obtaining
$$F_1(\bar u)\ge J_f(\bar \mu,\bar u)\ge \inf_{u\in C^1(\Dr)\cap C_0(\bar\Dr)}J_f(\bar\mu,u)=E_f(\bar\mu)=F_1(\bar u),$$
where the last equality follows by \eqref{interch}. Thus the second inequality is an equality and implies that $\bar u$ is the minimizer of the functional $J_f(\bar\mu,\cdot)$ in $C^1(\Dr)\cap C_0(\bar\Dr)$ and so it satisfies the Euler-Lagrange equation 
\be\label{eulag456}
-\dive\big((1+\bar\mu)\nabla \bar u\big)=f\quad\text{in }\Dr,\qquad \bar u=0\quad\text{on }\partial\Dr.
\ee
Using $\bar u$ as a test function in \eqref{eulag456} we get 
$$\int_{\Dr}|\nabla \bar u|^2\,dx+\int_{\Dr}|\nabla \bar u|^2\,d\bar\mu=\int_\Dr \bar uf\,dx.$$
On the other hand, since $\bar u$ is a minimizer of $F_1$, we have that the function $t\mapsto F_1(t\bar u)$ has a minimum in $t=1$, which gives 
$$\int_{\Dr}|\nabla \bar u|^2\,dx+m\|\nabla \bar u\|_{L^\infty}^2=\int_\Dr \bar uf\,dx.$$
Putting together the two identities we have that 
$$\int_{\Dr}|\nabla \bar u|^2\,d\bar\mu=m\|\nabla \bar u\|_{L^\infty}^2=mM^2,$$
which together with \eqref{eulag456} gives the Monge-Kantorovich transport equation
\be\label{monka}
\begin{cases}
-\dive(\bar\mu\nabla \bar u)=f+\Delta \bar u\quad\text{in }\Dr,\qquad\bar u=0\quad\text{on }\partial\Dr,\\
|\nabla \bar u|\le M,\\
|\nabla \bar u|=M\quad\text{on }\spt(\bar\mu).
\end{cases}
\ee

This equation was widely studied in the framework of optimal transport theory. In particular, it was proved in \cite{deevpr}, \cite{fil} and \cite{depr} that the integrability properties of the right-hand side give the integrability of $\bar\mu$. Thus we can prove directly Theorem \ref{mainth04conv} using these optimal transport results.

\begin{proof}[Alternative Proof of Theorem \ref{mainth04conv}]
Let $f\in L^r(\Dr)$ for some $r>d$. By Proposition \ref{exitmu04} there is a solution $\bar\mu$ to the problem \eqref{initialpb}. By the regularity result of Brezis and Stampacchia \cite{bresta} the minimizer $\bar u$ of $F_1$ is $C^{1,\alpha}$ for some $\alpha>0$ and $\Delta \bar u\in L^r(\Dr)$. Thus $\bar\mu$ and $\bar u$ solve \eqref{monka}, whose right-hand side is in $L^r(\Dr)$. Now by the summability results from \cite{deevpr} and \cite{fil} we have that $\mu\in L^r(\Dr)$, which concludes the proof in the case $r<\infty$. If $f\in L^\infty$, then again by \cite{bresta} we have that $\Delta \bar u +f\in L^\infty$. Now by \cite{fil} (and also \cite{filippo}) we have that also $\bar\mu\in L^\infty(\Dr)$.
\end{proof}

\section{The elastic-plastic torsion problem}\label{sobst}

The solution of the optimization problem \eqref{initialpb} is related to the solution of the variational minimization problem \eqref{varpb}. In fact, the solution of \eqref{varpb} is precisely the torsion function $\bar u$ associated to the optimal elasticity term $\theta$. On the other hand, in the case $f\equiv 1$, the problem \eqref{varpb} is equivalent to the well-known of {\it elastic-plastic torsion problem}. 
%
In fact if $\bar u\in W^{1,\infty}(\bar\Dr)$ is the solution of the variational problem \eqref{varpb}, then setting $\kappa=\|\nabla \bar u\|_{L^\infty}$, it is immediate to check that $\bar u$ is also the solution of 
\be\label{obstgrad}
\min\Big\{\frac12\int_{\Dr}|\nabla u|^2\,dx-\int_{\Dr}u\,dx\ :\ u\in H^1_0(\Dr),\ \|\nabla u\|_{L^\infty}\le \kappa\Big\}.
\ee
This problem was a subject of an intense study in the past. The first results are due to Ting who studied the regularity of $\bar u$ and of the free boundary $\partial\{|\nabla \bar u|<\kappa\}$ in the case of a square \cite{ting1}, a regular \cite{ting2} and an irregular polygon \cite{ting3}. The key observation is that the problem \eqref{obstgrad} is equivalent to the following {\it obstacle problem}: 
\be\label{obstacle}
\min\Big\{\frac12\int_{\Dr}|\nabla u|^2\,dx-\int_{\Dr}u\,dx\ :\ u\in H^1_0(\Dr),\ u\le \kappa\, d_{\partial\Dr}\Big\},
\ee
where by $d_{\partial\Dr}:\Dr\to[0,+\infty)$ we denote the distance function
$$d_{\partial\Dr}(x)=\min\big\{|x-y|\ :\ y\in\partial\Dr\big\}.$$
For a general domain $\Dr\subset\R^d$ the equivalence of \eqref{obstgrad} and \eqref{obstacle} was proved by Brezis and Sibony \cite{bresib} and is based on a bound on the Lipschitz constant of the solution of \eqref{obstacle}. In fact since we have that 
$$\Big\{u\in H^1_0(\Dr)\ :\ |\nabla u|\le k\Big\}\subset\Big\{u\in H^1_0(\Dr)\ :\ u\le \kappa\,d_{\partial\Dr}\Big\},$$
it is sufficient to prove that the solution of the obstacle problem \eqref{obstacle} is $\kappa$-Lipschitz. This may not be true in the case of a general force term $f$ when large oscillations can be produced directly by $f$ and so the equivalence of the two problems does not hold in general (see \cite{caffri}).

The regularity of $\bar u$ is due to several authors. In \cite{bresta} Brezis and Stampacchia showed that $\bar u\in C^{1,\alpha}(\Dr)$, for every $\alpha\in(0,1)$. The optimal regularity $\bar u\in C^{1,1}(\Dr)$ was obtained by Caffarelli-Riviere \cite{cari}, Evans \cite{evansC11} and Wiegner \cite{wiegner}. 
In the case of planar domains with boundary which is a union of $C^3$ curves the free boundary was characterized by Caffarelli and Friedman \cite{caffri}. 

In this case the optimal reinforcement $\theta$ is supported on the set $\{\bar u=\kappa d_{\partial\Dr}\}$ and satisfies the Hamilton-Jacobi equation 
$$\nabla\theta\cdot\nabla d_{\partial\Dr}+\theta \Delta d_{\partial\Dr}+\frac1\kappa+\Delta d_{\partial\Dr}=0\quad\hbox{on }\{\bar u=\kappa d_{\partial\Dr}\}.$$
This fact was used by Brezis in \cite{brezis} to obtain $\theta$ explicitly and also to show that $\theta$ is continuous. We notice that the regularity of $\theta$ for a general nonconstant $f$ is an open question. We conclude this section with an example in which the optimal reinforcement $\theta$ can be easily computed; other cases are treated numerically in Section \ref{snum}.

\begin{exam}[The radial case]Let $\Dr$ be the unit ball of $\R^d$ and $f=1$. We are then dealing with the optimal torsion problem. Passing to polar coordinates gives the optimization problem
$$\min\Big\{\frac12\int_0^1r^{d-1}|u'|^2\,dr+\frac{m}{2d\omega_d}\|u'\|^2_\infty-\int_0^1r^{d-1}u\,dr\ :\ u(1)=0\Big\},$$
where $\omega_d$ is the measure of $\Dr$. After some elementary calculations we find that the solution $\bar u$ is given by
$$\bar u(r)=\begin{cases}
\ds\frac{a^2-r^2}{2d}+\frac{a(1-a)}{d}&\hbox{if }r\in[0,a]\\
\ds\frac{a}{d}(1-r)&\hbox{if }r\in[a,1]
\end{cases}$$
where $a=a_m$ is a suitable number in $]0,1[$. Optimizing with respect to $a$ we obtain that $a_m$ is the unique solution of
$$a^{d+1}-(d+1)a\Big(1+\frac{m}{\omega_d}\Big)+d=0.$$
It remains to compute the optimal function $\bar\theta$. By the Euler-Lagrange equation of
$$\int_{a_m}^1\frac{1+\theta}{2}r^{d-1}|u'|^2\,dr-\int_{a_m}^1r^{d-1}u\,dr$$
we obtain the expression of $\bar\theta$, given by
$$\bar\theta(r)=\frac{r}{a_m}-1.$$
\end{exam}

\section{Approximation of the free boundary problem}\label{snum}

\subsection{A discrete convex constrained formulation}

We introduce in this section, a convergent discretization of the problem

\be\label{eq_fb}
\min\Big\{\int_D\Big(\frac12|\nabla u|^2-fu\Big)\,dx+\frac{m}{2}\|\nabla u\|^2_\infty\ :\ u\in H^1_0(D)\Big\}
\ee
for a domain $D\subset\R^d $. We focus our description on the case of the dimension $2$. It is straightforward to generalize this approach to the three dimensional case. Our approach relies both on classical techniques in approximation of linear partial differential equations and on a suitable reformulation of this non-smooth convex energy. Assume $\tau_h$ is a given mesh made of simplexes which approximates the computational domain $D$. In the context of $P_1$ finite elements, we consider $H^1(D_h)$ the space of functions which are globally continuous and piecewise linear on every simplex of the mesh $\tau_h$. Analogously to the continuous framework, we denote by $H^1_0(D_h)$ the functions of $H^1(D_h)$ which vanish at every boundary point of $\tau_h$. We define $K_h$ and $M_h$ respectively the stiffness and the mass matrices associated to this discretization, more precisely we have
$$K_h=\left(\int_{D_h}\nabla u_i\cdot\nabla u_j\,dx\right)_{i,j}
\qquad\qquad
M_h=\left(\int_{D_h}u_i u_j\,dx\right)_{i,j}$$
where the family $(u_i)$ is a basis of the the vector space $H^1_0(D_h)$. Thus if we denote by $\lambda$ the coordinates of a function of $H^1_0(D_h)$ in this basis, we can introduce the discrete $E_m(\lambda)$ energy associated to problem \eqref{eq_fb} defined by
$$\frac12\lambda^T K_h \lambda_ - v_f^T M_h \lambda + \frac{m}{2} \sup_l \{ (K_x\lambda)_l^2 + (K_y\lambda)_l^2 \}$$
where the exponent $T$ denotes the transpose operator, the coordinates of the vector $v_f$ are given by the evaluation of the fixed function $f$ on every vertex of the mesh and the matrices $K_x$ and $K_y$ are the linear operator associated to the evaluation of partial derivatives $\partial_x$ and $\partial_y$ in $H^1(D_h)$. Notice that $K_x$ and $K_y$ are not square matrices since their first dimension is indexed by the number of simplexes of $\tau_h$. In order to get rid of the $\sup$ term which makes the original energy non smooth, we classically introduce a new variable $t \in \R$ and the new quadratic constraints
\be\label{consd}
(K_x\lambda)_l^2 + (K_y\lambda)_l^2 \leq t
\ee
where $l$ is an index associated to the set of the simplexes of $\tau_h$. In this context, we define a new equivalent smooth convex constrained optimization problem which is the minimization of the energy $F_m(\lambda,t)$ defined by
\be\label{costd}
F_m(\lambda,t)=\frac12\lambda^T K_h\lambda_ - v_f^T M_h\lambda+\frac{m}{2} t
\ee
for admissible $(\lambda,t)$ which satisfy the constrained (\ref{consd}).

\subsection{Numerical results in 2D and 3D}

In order to study the numerically solution of problem \ref{costd} under constraints \ref{consd}, we used a classical interior point solver provided by the so called IPOPT library available from COIN-OR (\url{http://www.coin-or.org}). Since the cost function and all the constraints are of quadratic type, we provided an exact evaluation of the Hessian matrix with respect to our variable $(\lambda,t)$. Moreover, noticed that due to this quadratic formulation we can easily define a fixed pattern for the Hessian matrix which allows, through factorization techniques, to speed up dramatically the computation of the descent direction at every step. We present below several experiments based on this numerical approximation.

We first focused our study to the example of the unite disk. We discretize this set by a mesh made of $2 \times 10^4$ triangles. This complexity leads to an optimization problem of an approximate number of unknowns of the same order and under $2\times 10^4$ constraints. We represent on the first column of Figure \ref{fig:f0} the optimal graphs obtained for $m = 0,\, 0.1,\, 0.5$. We added on the graphs the analytically computed radial solution and observe an almost perfect match between our solution and the theoretical one.
On the second column of Figure \ref{fig:f0}, we plot the graph of the gradient of the solution and for $m = 0.1,\, 0.5$. We underlined the free boundary set by adding a bolted line along its boundary. We reproduce the same experiments for three other geometries on Figures \ref{fig:f1}, \ref{fig:f2} and \ref{fig:f3}. As expected, we observe in the three cases that for $m$ large enough the free boundary of the problem converges to the singular set of the distance function of the domain.

Finally, to illustrate the efficiency of this approach even in a large scale context, we approximated the optimal profiles on three dimensional problems. In Figures \ref{fig:f4} and \ref{fig:f5} we discretized a cube and a dodecahedron by meshes made of approximately $10^6$ simplexes for $m=5$. The IPOPT library was able to solve such problems in several minutes on a standard computer. We plot in Figures \ref{fig:f4} and \ref{fig:f5} different cuts of the graph of the modulus of the gradient of the solution. As in the two dimensional case we observe a convergence of the free boundary to a complex geometrical set which is very close from the singular set of the distance function.

\begin{figure}
\centering
\begin{tabular}{r r}
\includegraphics[width=\widthhh cm ]{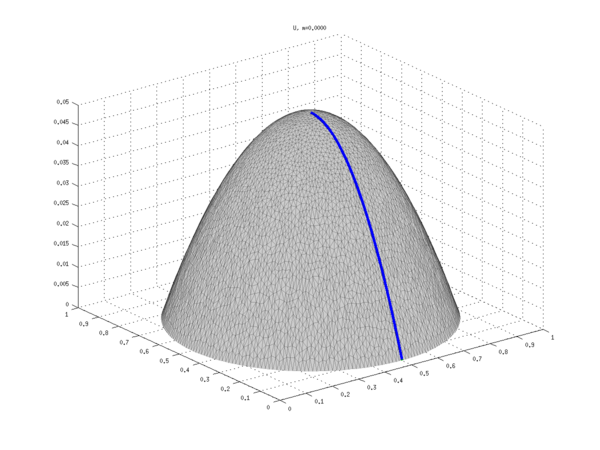}&
\includegraphics[width=\widthhh cm ]{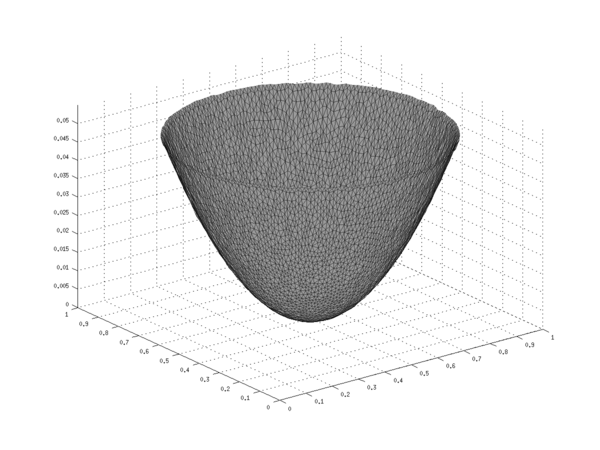}\\
\includegraphics[width=\widthhh cm ]{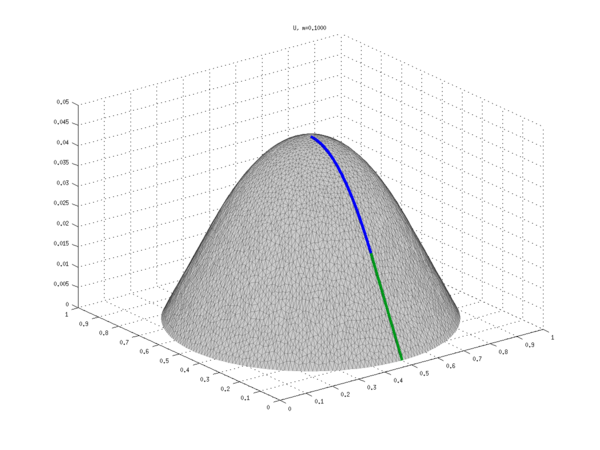}&
\includegraphics[width=\widthhh cm ]{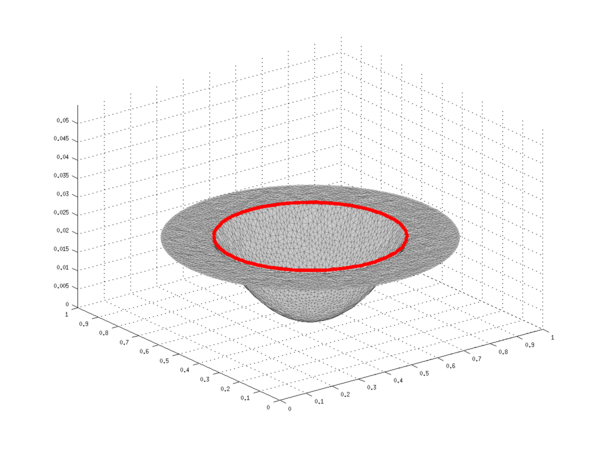}\\
\includegraphics[width=\widthhh cm ]{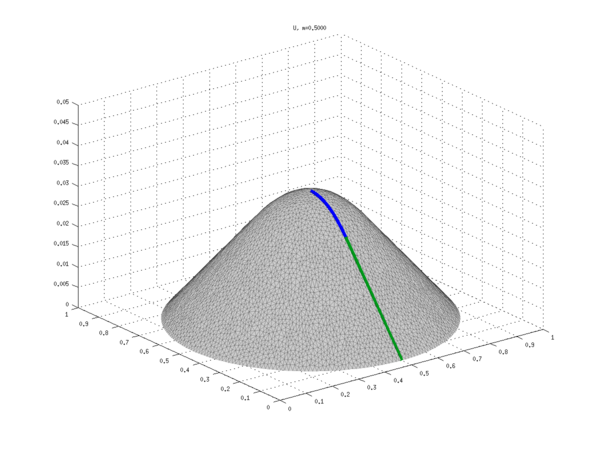}&
\includegraphics[width=\widthhh cm ]{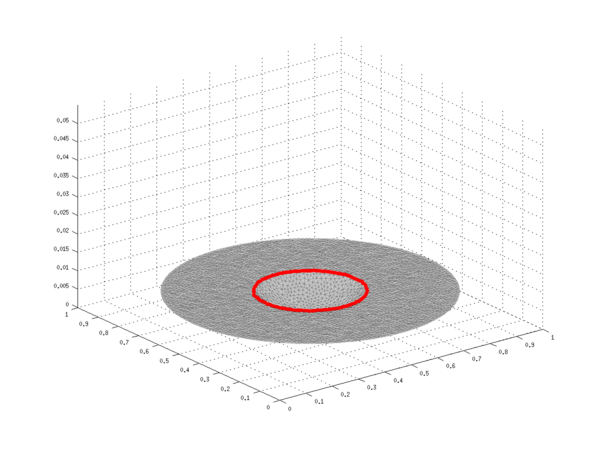}
\end{tabular}
\caption{Optimization on a disk for $m = 0, 0.1, 0.5$}
\label{fig:f0}
\end{figure}

\begin{figure}
\centering
\begin{tabular}{r r r}
\includegraphics[width=\widthhh cm ]{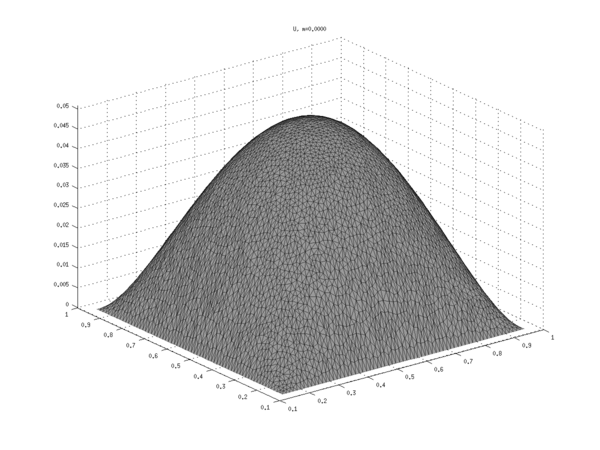}&
\includegraphics[width=\widthhh cm ]{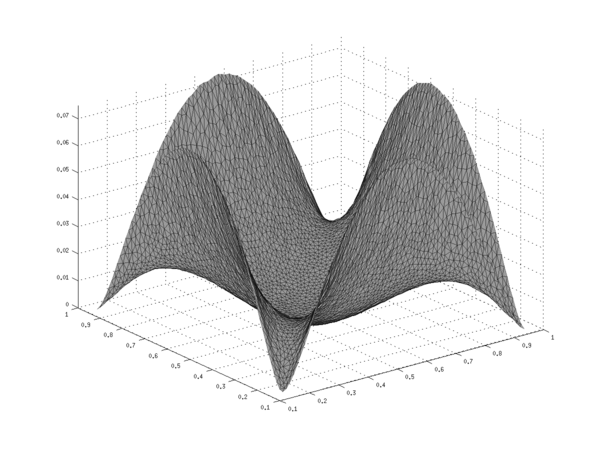}&\\
\includegraphics[width=\widthhh cm ]{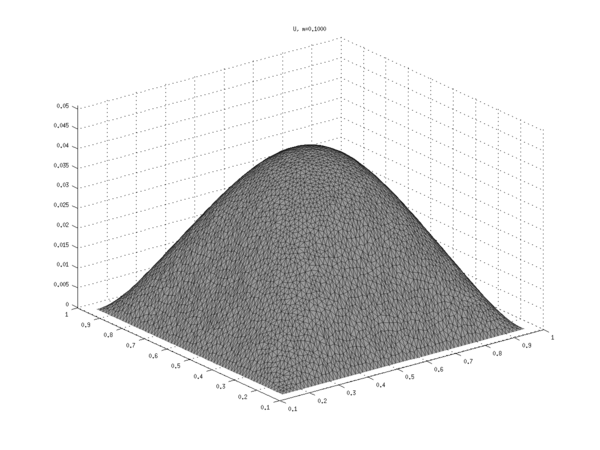}&
\includegraphics[width=\widthhh cm ]{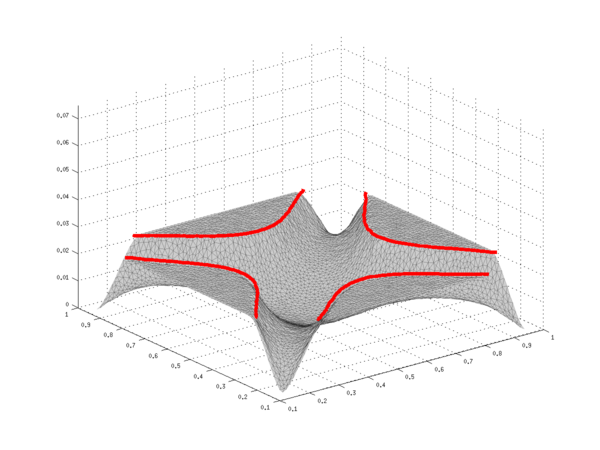}\\
\includegraphics[width=\widthhh cm ]{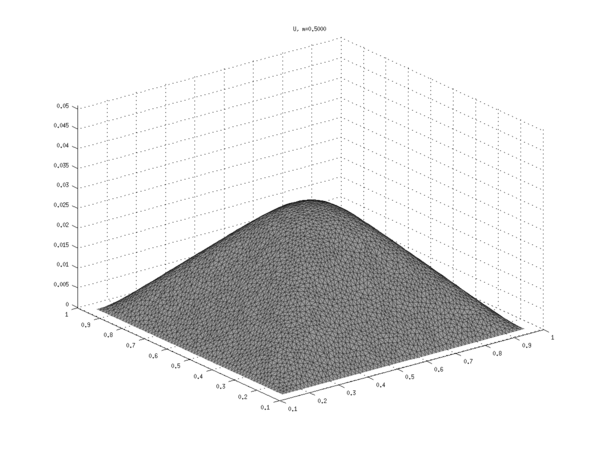}&
\includegraphics[width=\widthhh cm ]{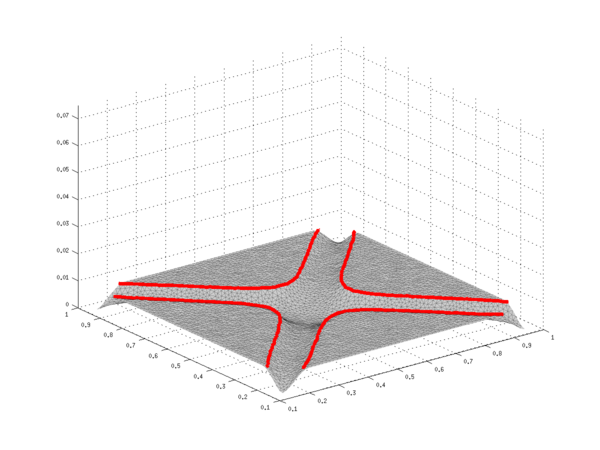}
\end{tabular}
\caption{Optimization on a square for $m = 0, 0.1, 0.5$}
\label{fig:f1}
\end{figure}


\begin{figure}
\centering
\begin{tabular}{r r r}
\includegraphics[width=\widthhh cm ]{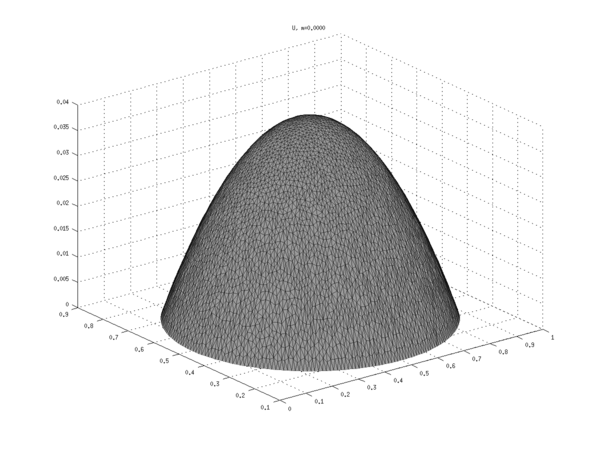}&
\includegraphics[width=\widthhh cm ]{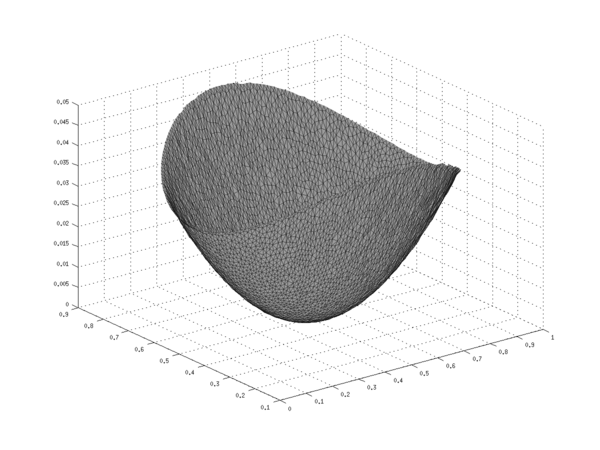}\\
\includegraphics[width=\widthhh cm ]{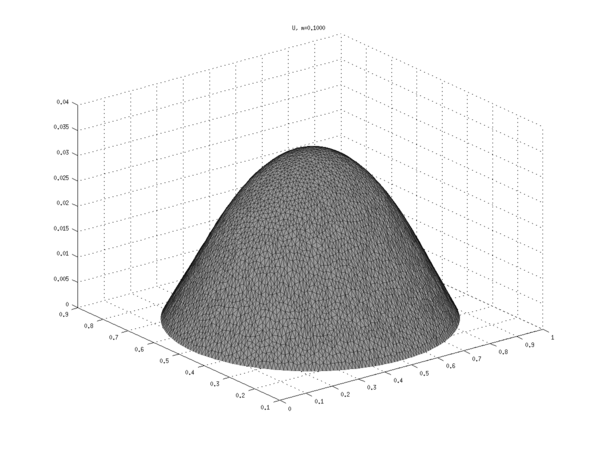}&
\includegraphics[width=\widthhh cm ]{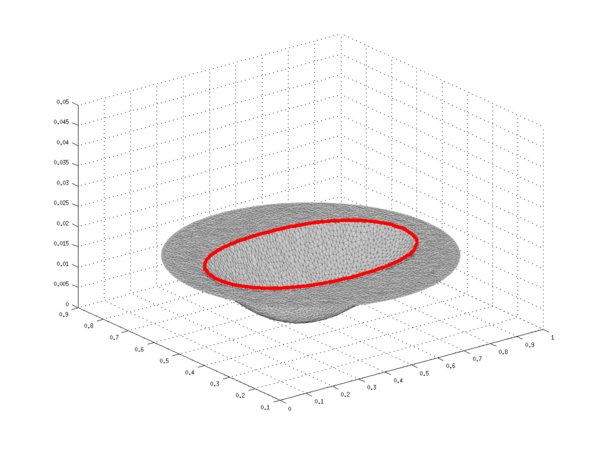}\\
\includegraphics[width=\widthhh cm ]{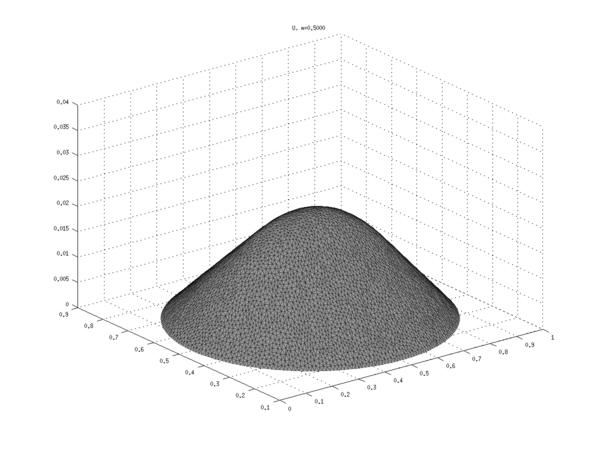}&
\includegraphics[width=\widthhh cm ]{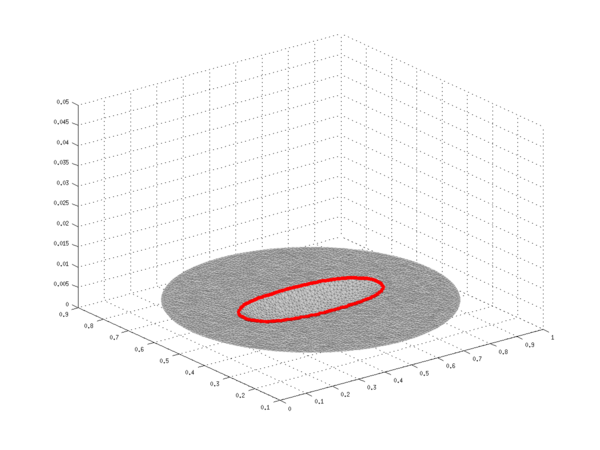}
\end{tabular}
\caption{Optimization on an ellipse for $m = 0, 0.1, 0.5$}
\label{fig:f2}
\end{figure}


\begin{figure}
\centering
\begin{tabular}{r r r}
\includegraphics[width=\widthhh cm ]{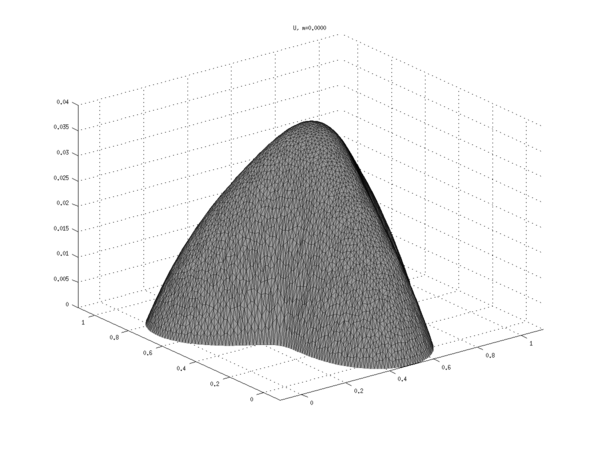}&
\includegraphics[width=\widthhh cm ]{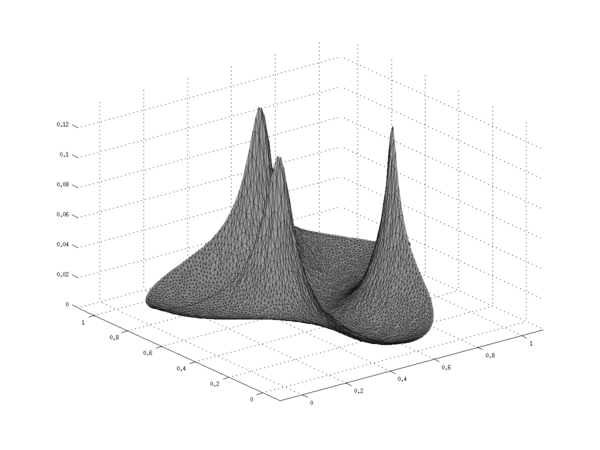}\\
\includegraphics[width=\widthhh cm ]{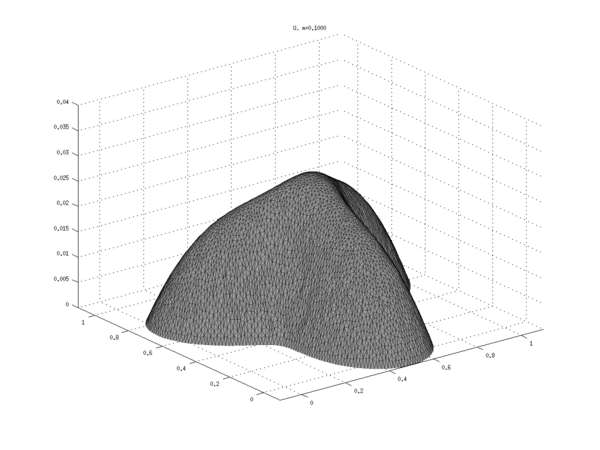}&
\includegraphics[width=\widthhh cm ]{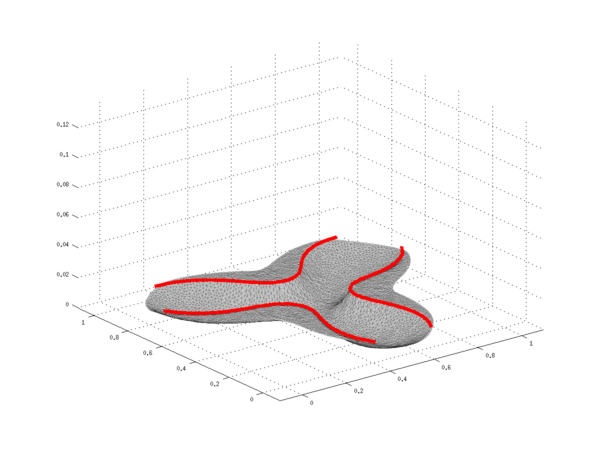}\\
\includegraphics[width=\widthhh cm ]{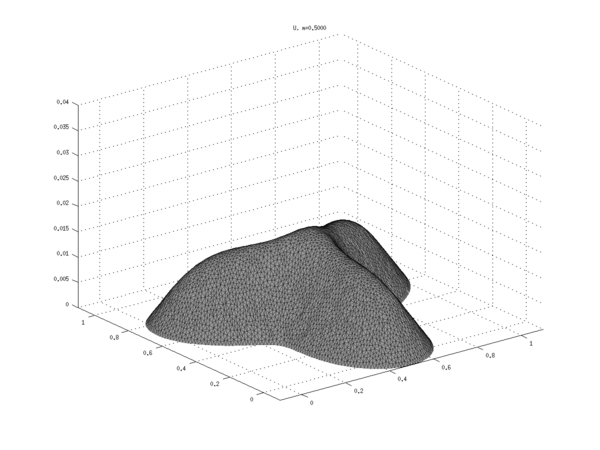}&
\includegraphics[width=\widthhh cm ]{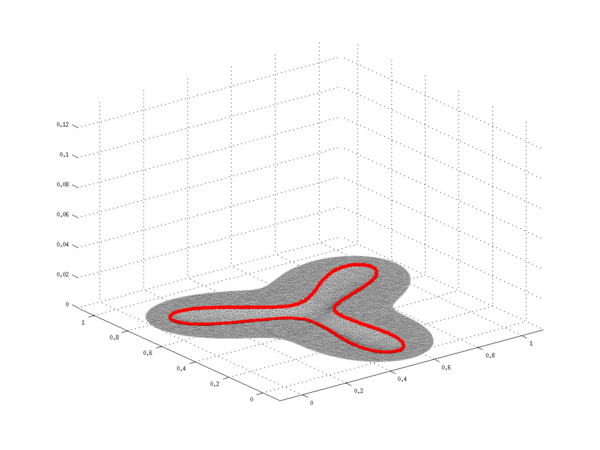}
\end{tabular}
\caption{Optimization on a treffle for $m = 0, 0.1, 0.5$}
\label{fig:f3}
\end{figure}


\begin{figure}
\centering
\begin{tabular}{r r r}
\includegraphics[width=\widthh cm ]{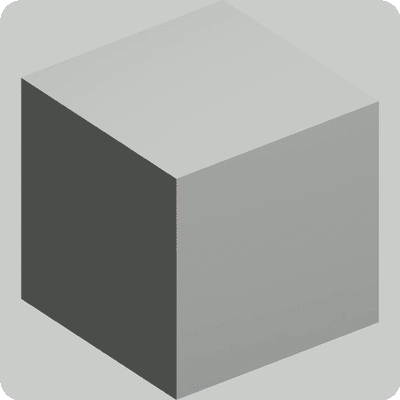}&
\includegraphics[width=\widthh cm ]{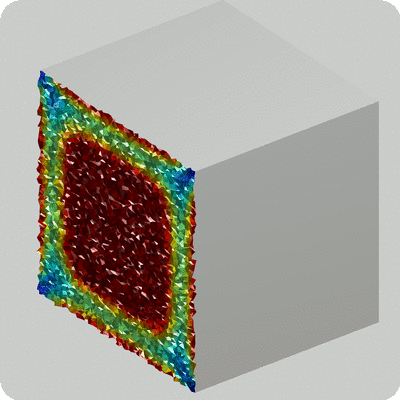}&
\includegraphics[width=\widthh cm ]{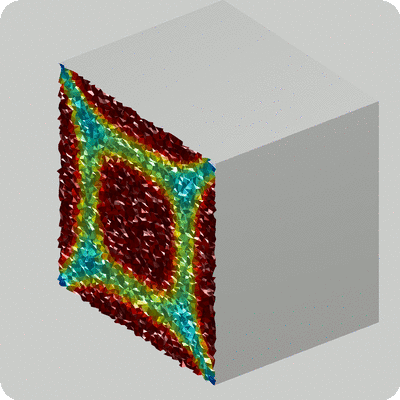}
\\
\includegraphics[width=\widthh cm ]{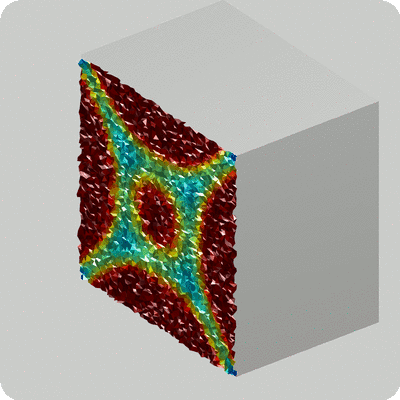}&
\includegraphics[width=\widthh cm ]{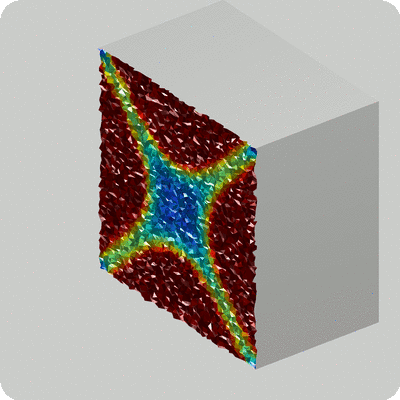}&
\includegraphics[width=\widthh cm ]{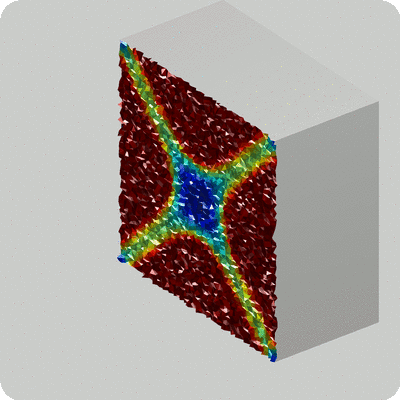}
\\
\includegraphics[width=\widthh cm ]{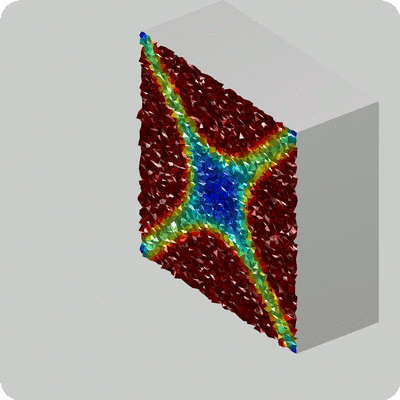}&
\includegraphics[width=\widthh cm ]{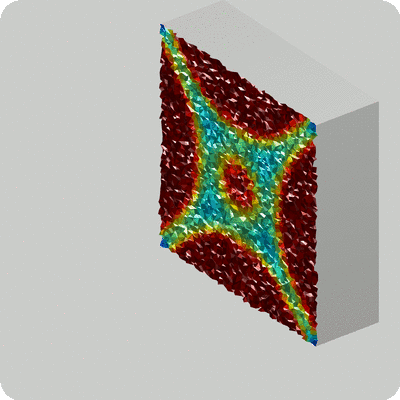}&
\includegraphics[width=\widthh cm ]{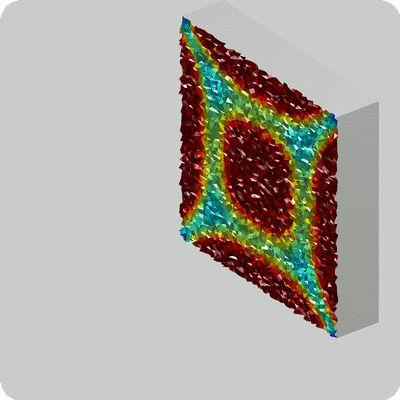}
\end{tabular}
\caption{Tomographic view of the optimal solution defined in a cube for $m=5$. The free boundary is visible in light color and is very close to the cut locus of the cube}
\label{fig:f4}
\end{figure}

\begin{figure}
\centering
\begin{tabular}{r r r}
\includegraphics[width=\widthh cm ]{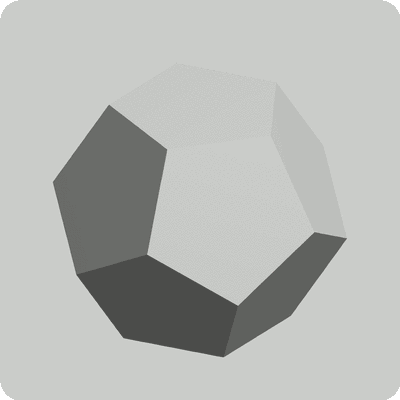}&
\includegraphics[width=\widthh cm ]{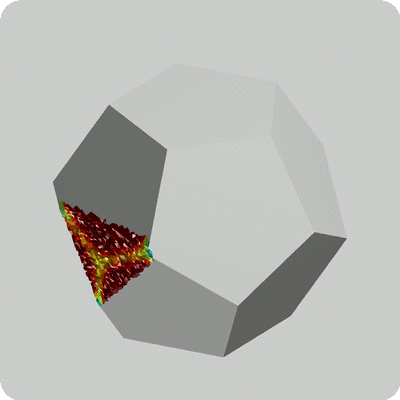}&
\includegraphics[width=\widthh cm ]{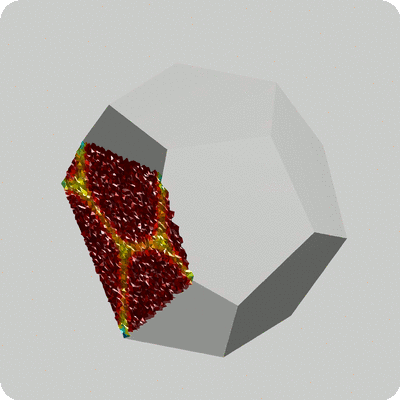}
\\
\includegraphics[width=\widthh cm ]{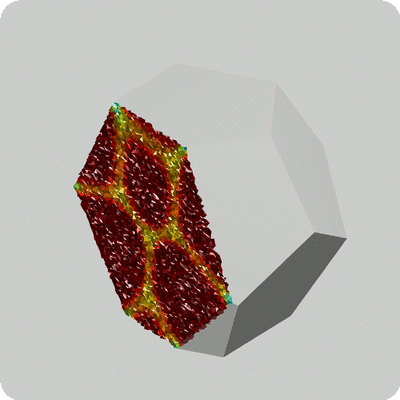}&
\includegraphics[width=\widthh cm ]{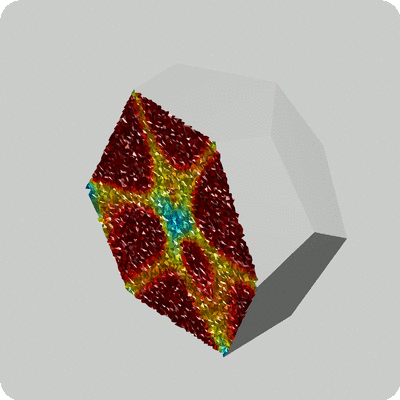}&
\includegraphics[width=\widthh cm ]{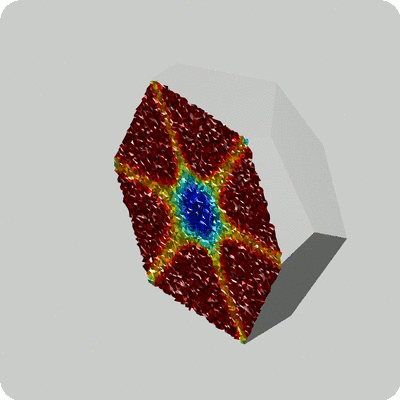}
\\
\includegraphics[width=\widthh cm ]{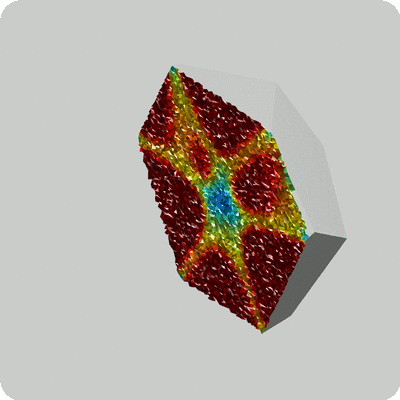}&
\includegraphics[width=\widthh cm ]{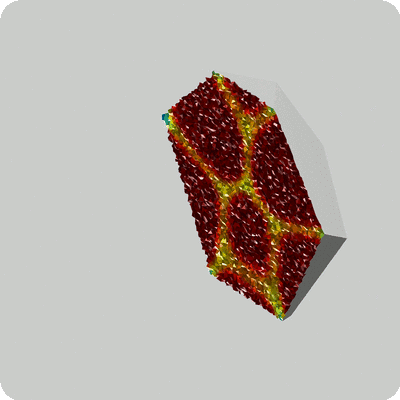}&
\includegraphics[width=\widthh cm ]{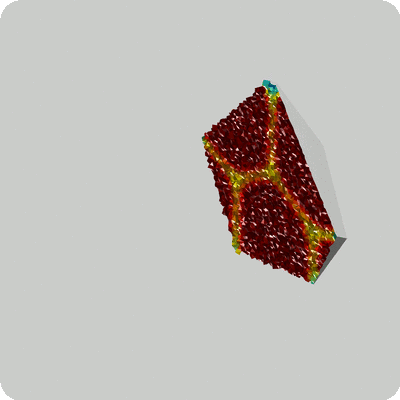}
\end{tabular}
\caption{Tomographic view of the optimal solution defined in a dodecahedron for $m=5$. The free boundary is visible in light color and is very close to the cut locus of the dodecahedron}
\label{fig:f5}
\end{figure}


\medskip
\ack A part of this paper was written during a visit of the authors at the Johann Radon Institute for Computational and Applied Mathematics (RICAM) of Linz. The authors gratefully acknowledge the Institute for the excellent working atmosphere provided. The work of the first author is part of the project 2010A2TFX2 {\it``Calcolo delle Variazioni''} funded by the Italian Ministry of Research and University. The first author is member of the Gruppo Nazionale per l'Analisi Matematica, la Probabilit\`a e le loro Applicazioni (GNAMPA) of the Istituto Nazionale di Alta Matematica (INdAM).


\bigskip
{\small\noindent
Giuseppe Buttazzo:
Dipartimento di Matematica,
Universit\`a di Pisa\\
Largo B. Pontecorvo 5,
56127 Pisa - ITALY\\
{\tt buttazzo@dm.unipi.it}\\
{\tt http://www.dm.unipi.it/pages/buttazzo/}

\bigskip\noindent
Edouard Oudet:
Laboratoire Jean Kuntzmann (LJK),
Universit\'e Joseph Fourier\\
Tour IRMA, BP 53, 51 rue des Math\'ematiques,
38041 Grenoble Cedex 9 - FRANCE\\
{\tt edouard.oudet@imag.fr}\\
{\tt http://www-ljk.imag.fr/membres/Edouard.Oudet/} 

\bigskip\noindent
Bozhidar Velichkov:
Laboratoire Jean Kuntzmann (LJK),
Universit\'e Joseph Fourier\\
Tour IRMA, BP 53, 51 rue des Math\'ematiques,
38041 Grenoble Cedex 9 - FRANCE\\
{\tt bozhidar.velichkov@imag.fr}\\
{\tt http://www.velichkov.it} 

\end{document}